\documentclass[a4paper]{amsart}

\pdfoutput=1

\usepackage[utf8]{inputenc}
\usepackage[T1]{fontenc}
\usepackage{lmodern}
\usepackage{amsthm, amssymb, amsmath, amsfonts, mathrsfs}
\usepackage[colorlinks=true, pdfstartview=FitV, linkcolor=blue, citecolor=blue, urlcolor=blue,pagebackref=false]{hyperref}

\usepackage{MnSymbol} 

\usepackage{esint} 




\usepackage{microtype}









\newtheorem{proposition}{Proposition}
\newtheorem{theorem}[proposition]{Theorem}
\newtheorem{lemma}[proposition]{Lemma}
\newtheorem{corollary}[proposition]{Corollary}

\theoremstyle{remark}

\theoremstyle{definition}

\numberwithin{equation}{section}
\numberwithin{proposition}{section}

\renewcommand{\le}{\leqslant}
\renewcommand{\ge}{\geqslant}
\renewcommand{\leq}{\leqslant}
\renewcommand{\geq}{\geqslant}
\newcommand{\ls}{\lesssim}
\newcommand{\les}{\ls}
\renewcommand{\subset}{\subseteq}
\newcommand{\mcl}{\mathcal}

\newcommand{\B}{\mathbb{B}}

\newcommand{\E}{\mathbb{E}}

\newcommand{\G}{\mathcal{G}}

\newcommand{\cA}{\mathrm{A}}

\renewcommand{\L}{\mathcal{L}}
\newcommand{\M}{\mathbf{M}}

\newcommand{\Ll}{\left}
\newcommand{\Rr}{\right}

\newcommand{\1}{\mathbf{1}}
\newcommand{\R}{\mathbb{R}}

\newcommand{\Z}{\mathbb{Z}}
\newcommand{\Zd}{{\mathbb{Z}^d}}
\renewcommand{\P}{\mathbb{P}}

\newcommand{\td}{\widetilde}
\renewcommand{\tilde}{\widetilde}
\newcommand{\eps}{\varepsilon}
\renewcommand{\d}{{\mathrm{d}}}

\newcommand{\X}{\mathbf{X}}
\newcommand{\bfeta}{\boldsymbol\eta}
\newcommand{\PP}{\mathbf{P}}
\newcommand{\EE}{\mathbf{E}}

\newcommand{\la}{\left\langle}
\newcommand{\ra}{\right\rangle}
\newcommand{\vb}{\, \big \vert \, }

\renewcommand{\fint}{\strokedint}
\newcommand{\Rd}{{\mathbb{R}^d}}

\title{Heat kernel upper bounds for interacting particle systems}
\author{Arianna Giunti, Yu Gu, Jean-Christophe Mourrat}

\address[Arianna Giunti]{Institute for Applied Mathematics, University of Bonn, Bonn, Germany}

\address[Yu Gu]{Department of Mathematics, Carnegie Mellon University, Pittsburgh, PA 15213, USA}

\address[Jean-Christophe Mourrat]{Ecole normale sup\'erieure de Lyon, CNRS, Lyon, France}

\begin{document}

\begin{abstract}

We show a diffusive upper bound on the transition probability of a tagged particle in the symmetric simple exclusion process. The proof relies on optimal spectral gap estimates for the dynamics in finite volume, which are of independent interest. We also show off-diagonal estimates of Carne-Varopoulos type.

\bigskip

\noindent \textsc{MSC 2010:} 82C22, 35B65, 60K35.

\medskip

\noindent \textsc{Keywords:} heat kernel estimate, interacting particle system, exclusion process.

\end{abstract}
\maketitle
%
%
%
%
%
%
%
%
\section{Introduction}
\label{s.intro}

\subsection{Main result}

The qualitative theory of stochastic homogenization of diver\-gence-form equations was developed in the late 70's \cite{K1,Y0,PV1}. By a probabilistic representation, it is equivalent to the invariance principle for the corresponding reversible diffusion in random environment. Shortly afterwards, a strikingly general invariance principle was proved for additive functionals of reversible Markov chains \cite{kipvar}.
This result enables to show at once that a reversible random walk (or diffusion) in a random environment, and a tagged particle in a symmetric exclusion process, both rescale to Brownian motion. The recent monograph \cite{komorowski2012fluctuations} covers many further developments on this approach.

The price to pay for the breadth of this result is the difficulty to strengthen or quantify it. For instance, it was asked in \cite[Remark~1.10]{kipvar} whether a tagged particle in a symmetric exclusion process satisfies an invariance principle for almost every realization of the initial configuration (a ``quenched'' invariance principle). To this day, this question is still open.

Optimal quantitative results on the homogenization of divergence-form equations with random coefficients have only started to appear recently. We refer to \cite{GO1,GO2,GO3,GNO,GNO2,MO,AS,GNO3,AM,AKM,AKM2,GO5} for a sample of the recent work, and to \cite{AKMbook} for a monograph on the subject. Previous work focused on showing quenched invariance principles, and could ultimately cover very degenerate situations such as random walks on percolation clusters \cite{sidszn,berbis,matpia,bispre,mathieu,abdh,ads1,ads4,ads5}.

In both lines of research, one central ingredient of the proofs is a heat kernel or regularity estimate. The fact that heat kernel estimates imply a quenched invariance principle was understood early on, see \cite{osada}. Proving heat kernel bounds for degenerate environments such as percolation clusters is however a comparatively recent breakthrough \cite{matrem,bar}. We refer to \cite{biskup-survey,kum} for surveys of the topic, and to \cite{bkm,ads3,nash2,ad} for more recent contributions.

We aim to develop a comparable program for the case of a tagged particle in the symmetric exclusion process. In this paper, we show diffusive heat kernel bounds for this process. To the best of our knowledge, this is the first result of this type for an interacting particle system. Our method can be applied to more general reversible particle systems, although we choose to focus on this particular case for clarity. 

In a related direction, several works aimed at proving that certain particle systems converge to equilibrium at a polynomial rate. We refer in particular to \cite{lig91, deu94, berzeg,jlqy,lanyau,ccr} for references on this aspect.

We write $(\X_t,\bfeta_t)_{t \ge 0}$ for the joint process of the tagged particle and the symmetric simple exclusion process on $\Z^d$, $d \ge 2$, started at $(X,\eta)$. We fix the average density of particles at $\rho \in (0,1)$: under the measure $\la \cdot \, | \, X = 0 \ra_\rho$, the random variables $(\eta(x))_{x \neq 0}$ are i.i.d.\ Bernoulli with parameter $\rho$. We refer to the next section for precise definitions. Here is our main result.

\begin{theorem}[Heat kernel bound]
\label{t.main}
For every $p \ge 2$, there exists a constant $C(d,\rho,p) < \infty$ such that for every $t > 0$,
\begin{equation}  
\label{e.main}
\sum_{x \in \Zd} \la \Ll( \mathbf{P}_{(X,\eta)} \Ll[ \mathbf{X}_t = x  \Rr]  \Rr)^p  \, | \, X = 0 \ra_\rho  \le C \, {t^{\Ll(1-p\Rr)\frac d 2}}.
\end{equation}
\end{theorem}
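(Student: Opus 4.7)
The plan is to derive the bound via a Nash--Varopoulos type scheme for the joint process $(\X_t,\bfeta_t)$, driven by a diffusive spectral-gap estimate for the dynamics in a torus, together with a spatial truncation reducing $\Zd$ to a torus. The spectral-gap estimate is the technical heart of the argument.

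First, I would reduce the problem to a finite torus $\T^d_L$ with $L=C(d,p)\sqrt{t\log t}$. A standard a priori estimate on the maximal displacement of the tagged particle---for example via the martingale part of the Kipnis--Varadhan decomposition and an exponential-moment bound---gives $\mathbf{P}_{(X,\eta)}[\max_{s\le t}|\X_s|\ge L/2]\le t^{-100p}$, which is much smaller than the target $t^{(1-p)d/2}$. On the complementary event, using a graphical construction of the dynamics together with the fact that the region of influence up to time $t$ is of order $\sqrt{t}\ll L$, one can couple the $\Zd$ dynamics with the periodic dynamics in $\T^d_L$. It therefore suffices to prove the bound for the torus dynamics.

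The next step---and here I expect the main obstacle---is to prove that the joint tagged-plus-exclusion dynamics in $\T^d_L$ has spectral gap bounded below by $cL^{-2}$, uniformly in the density $\rho\in(0,1)$. This matches the classical Lu--Yau / Quastel gap for the pure SSEP and is the ``optimal spectral gap'' announced in the abstract. The new difficulty is that the labelled particle couples the position coordinate to the exclusion coordinate through jumps allowed only across vacant bonds, which breaks the usual symmetry of the Dirichlet form. I would expect the proof to build on existing spectral-gap technology for SSEP---Lu--Yau martingale recursion, Quastel's moving-particle lemma, or Caputo-type comparison arguments---adapted to absorb the tagged-particle contribution at the cost only of a multiplicative constant.

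Finally, I would feed the spectral gap into the standard Nash--Varopoulos machinery. Combining the $L^{-2}$ gap at every scale $L$ with the $d$-dimensional volume growth of $\Zd$, in the spirit of Coulhon--Saloff-Coste, should yield a Nash inequality of effective dimension $d$ for the joint semigroup with respect to its natural reference measure. The Nash differential inequality then gives, for the density $p_t$ of this semigroup, an on-diagonal bound of order $t^{-d/2}$. Applying Cauchy--Schwarz on the configuration variable, one obtains $\sum_x f(t,x,\eta)^2 \le p_{2t}((0,\eta),(0,\eta))$ pointwise in $\eta$, where $f(t,x,\eta):=\mathbf{P}_{(X,\eta)}[\X_t=x]$; averaging over $\eta$ gives the $p=2$ case. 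The case $p>2$ follows by interpolating this $\ell^2$ bound against the sup-norm bound on $f$ produced by the same ultracontractivity argument. This last part is comparatively routine; the main effort is with the spectral gap of Step 2.
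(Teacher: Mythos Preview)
Your overall architecture---localize, prove a diffusive spectral gap, then run a Nash-type argument---matches the paper at the level of a one-line summary, but Step~3 contains a genuine gap that breaks the proof.

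The claim that ``combining the $L^{-2}$ gap at every scale $L$ with the $d$-dimensional volume growth of $\Zd$ yields a Nash inequality of effective dimension $d$ for the joint semigroup'' is false. The state space of the joint process is not $d$-dimensional: the reference measure $\sum_x \langle\,\cdot\,|x\rangle_\rho$ assigns exponentially small mass to events that fix the configuration in a box. Concretely, take $u(x,\eta)=\1_{\{x=0\}}\1_{\{\eta|_{B_r}=\eta_0|_{B_r}\}}$ for some fixed $\eta_0$. Then $\|u\|_1=\|u\|_2^2\sim e^{-cr^d}$, while $\mathcal E(u,u)\sim r^d\,\|u\|_1$ (the only active flips are the $O(r^d)$ edges inside $B_r$). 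The inequality $\|u\|_2^{2+4/d}\le C\,\mathcal E(u,u)\,\|u\|_1^{4/d}$ would force $\|u\|_1^{-2/d}\lesssim r^d$, i.e.\ $e^{cr^d}\lesssim r^{d^2/2}$, which fails for large $r$. So no $d$-dimensional Nash inequality can hold for the joint dynamics, and the Coulhon--Saloff-Coste machinery does not apply: the spectral gap you prove lives on the canonical (fixed-particle-number) ensemble, whereas the $L^1,L^2$ norms are with respect to the grand-canonical Bernoulli measure, and there is no compatible ``volume growth'' relating the two.

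Your Step~4 inherits the same problem. The Cauchy--Schwarz bound $\sum_x f(t,x,\eta)^2 \le p_{2t}((0,\eta),(0,\eta))$ is correct, but the right-hand side is the on-diagonal kernel of the \emph{full} process, and it is not $O(t^{-d/2})$: on a torus of side $L\sim\sqrt{t}$ the chain is nowhere near mixed at time $t$ (the mixing time involves $\log|\Omega|\sim L^d$), and the averaged return probability $\langle p_{2t}((0,\eta),(0,\eta))\rangle=\mathrm{Tr}\,e^{2t\L}$ is governed by the exponential number of configuration modes, not by the $d$ spatial modes. Passing to the marginal in $x$ by integrating out $\eta'$ is precisely the step that produces the $t^{-d/2}$ scaling, and Cauchy--Schwarz throws this integration away.

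What the paper does instead is keep the decomposition you have in mind (oscillation plus local average), control the oscillation with the finite-volume spectral gap \emph{conditioned on the particle count in each sub-box}, and then handle the local-average term by an argument with no Nash analogue: one rewrites $\sum_y\langle u_t\,|\,M,y\rangle$ via the Radon--Nikodym density $\tilde h^M$ of the canonical measure, uses reversibility to move the semigroup from $u_t$ onto $\tilde h^M$, observes that $\tilde h^M$ does not depend on the tagged particle so that $P_t\tilde h^M$ evolves under the pure Kawasaki dynamics, and finishes with $L^1$ contraction plus the locality of $f$. This last step---not the spectral gap---is the real replacement for the ``$\|u\|_1$ is conserved'' input of the classical Nash argument, and it is what your proposal is missing.
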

\begin{corollary}  
\label{c.main}
For every $p \ge 2$ and $\eps > 0$, there exists a constant $C(d,\rho,p,\eps) < \infty$ such that for every $x \in \Zd$ and $t > 0$,
\begin{equation}  
\label{e.cmain}
\la \Ll( \mathbf{P}_{(X,\eta)} \Ll[ \mathbf{X}_t = x  \Rr]  \Rr)^p  \, | \, X = 0 \ra_\rho ^\frac 1 p \le C \, {t^{- \Ll(\frac d 2 - \eps\Rr)}}.
\end{equation}
\end{corollary}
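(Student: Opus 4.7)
The plan is to deduce Corollary~\ref{c.main} from Theorem~\ref{t.main} by two very standard moves: (i) trivially bound a single term by the whole sum, and (ii) use monotonicity of $L^q$ norms in $q$ (Jensen's inequality, since $\la \cdot \, | \, X = 0 \ra_\rho$ is a probability measure) to pass from a large exponent to the desired exponent~$p$.

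Given $p \ge 2$ and $\eps > 0$, I would pick an exponent $q \ge \max(p, d/(2\eps))$ and apply Theorem~\ref{t.main} with this $q$. For any fixed $x \in \Zd$, since each summand on the \lhs{} of \eqref{e.main} is nonnegative,
\begin{equation*}
\la \Ll( \mathbf{P}_{(X,\eta)} \Ll[ \mathbf{X}_t = x  \Rr]  \Rr)^q \, | \, X = 0 \ra_\rho
\le \sum_{y \in \Zd} \la \Ll( \mathbf{P}_{(X,\eta)} \Ll[ \mathbf{X}_t = y  \Rr]  \Rr)^q \, | \, X = 0 \ra_\rho
\le C \, t^{(1-q)\frac d 2}.
\end{equation*}
Taking $q$-th roots,
\begin{equation*}
\la \Ll( \mathbf{P}_{(X,\eta)} \Ll[ \mathbf{X}_t = x  \Rr]  \Rr)^q \, | \, X = 0 \ra_\rho^{\frac 1 q}
\le C^{\frac 1 q} \, t^{\frac{d}{2q} - \frac d 2}.
\end{equation*}

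Since $\la \cdot \, | \, X = 0 \ra_\rho$ is a probability measure and $q \ge p$, Jensen's inequality (monotonicity of $L^r$ norms on probability spaces) gives
\begin{equation*}
\la \Ll( \mathbf{P}_{(X,\eta)} \Ll[ \mathbf{X}_t = x  \Rr]  \Rr)^p \, | \, X = 0 \ra_\rho^{\frac 1 p}
\le \la \Ll( \mathbf{P}_{(X,\eta)} \Ll[ \mathbf{X}_t = x  \Rr]  \Rr)^q \, | \, X = 0 \ra_\rho^{\frac 1 q}.
\end{equation*}
Combining the two displays and using $d/(2q) \le \eps$ by the choice of $q$ yields \eqref{e.cmain} with a constant $C$ depending on $d, \rho, p, \eps$ (through the auxiliary exponent $q$).

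There is no real obstacle here; the only ``choice'' is the exponent $q$, and one pays for the loss $\eps$ in the exponent precisely by working at a higher moment $q \sim d/\eps$, which is exactly the standard trade-off when converting a summed bound into a pointwise one.
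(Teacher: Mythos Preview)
Your proof is correct and follows essentially the same approach as the paper: both bound the single term by the full sum from Theorem~\ref{t.main} and use Jensen's inequality (monotonicity of $L^q$ norms on a probability space) to pass from a large auxiliary exponent $q$ down to $p$, then choose $q$ large enough so that $d/(2q)\le\eps$. The only cosmetic difference is the order in which the two steps are written.
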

We can also complement this information by an off-diagonal bound of Carne-Varopoulos type. 
\begin{theorem}[Carne-Varopoulos bound]
\label{t.carne}
There exists a constant $C(d) < \infty$ such that for every $x \in \Z^d$ and $t > 0$,
\begin{equation*}  
\la  \PP_{(X,\eta)} \Ll[\X_t = x\Rr]  \, | \, X = 0 \ra_\rho\le 
\Ll|
\renewcommand{\arraystretch}{1.5}
\begin{array}{ll}
 \exp\Ll(-\frac{|x|^2}{Ct}\Rr) & \text{if } |x|\leq t, \\
  \exp\Ll( -\frac{|x|}{C}\Rr) & \text{if } |x|>t.
  \end{array}
  \Rr.
\end{equation*}
\end{theorem}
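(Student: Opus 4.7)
The plan is to apply Davies' exponential perturbation method to the joint reversible Markov process $(\X_t, \bfeta_t)$. Let $\pi$ denote its (\emph{a priori} $\sigma$-finite) reversible measure: its restriction to $\{X = x\}$ is the product Bernoulli$(\rho)$ measure on $\{0, 1\}^{\Z^d \setminus \{x\}}$, so $\pi(\{X = x\}) = 1$ for each $x$ and $\pi$ is translation invariant. The generator $\L$ is self-adjoint on $L^2(\pi)$. This framework can be set up rigorously by first working in a periodic box $\Lambda_L$ (in which $\pi$ is a genuine probability measure) and passing to the limit $L \to \infty$ after obtaining $L$-uniform bounds.

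Fix $x \neq 0$, set $\hat x := x/|x|$, and for $\theta \ge 0$ let $\phi(X, \eta) := \theta \, \hat x \cdot X$. Since only the tagged-particle jumps change the value of $\phi$, a short computation---using $|\hat x| = 1$ and the elementary inequality $\cosh(a\theta) - 1 \le a^2(\cosh \theta - 1)$ for $|a| \le 1$---yields the pointwise bound on the Davies potential
\[
V(X, \eta) := \sum_{|e|=1} (1 - \eta(X + e)) \bigl( \cosh(\theta \, \hat x \cdot e) - 1 \bigr) \le 2(\cosh \theta - 1).
\]
The standard Dirichlet-form expansion of $\la e^\phi \L e^{-\phi} h, h \ra_\pi$ then implies the operator norm estimate
\[
\bigl\| e^\phi e^{t \L} e^{-\phi} \bigr\|_{L^2(\pi) \to L^2(\pi)} \le \exp\bigl( 2(\cosh \theta - 1) t \bigr).
\]

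Applying this against $f := \mathbf{1}_{\{X = x\}}$ and $g := \mathbf{1}_{\{X = 0\}}$, each of unit $L^2(\pi)$-norm, and using $\|e^{-\phi} f\|_2 = e^{-\theta |x|}$ and $\|e^\phi g\|_2 = 1$ together with Cauchy--Schwarz, one obtains
\[
\la f, e^{t \L} g \ra_\pi \le \exp\bigl( -\theta |x| + 2(\cosh \theta - 1) t \bigr).
\]
On the other hand, the translation and reflection invariance of the infinite-volume dynamics give the identification
\[
\la f, e^{t \L} g \ra_\pi = \la \mathbf{P}_{(X, \eta)}[\X_t = x] \, \vert \, X = 0 \ra_\rho,
\]
so optimising over $\theta \ge 0$ (the minimiser satisfies $\sinh \theta = |x|/(2t)$) recovers both regimes of the theorem: for $|x| \le t$ the Taylor expansion of $\cosh$ about $0$ produces the Gaussian exponent $-|x|^2/(Ct)$, while for $|x| > t$ the minimiser is bounded below by a positive constant, and a direct estimate produces the linear exponent $-|x|/C$. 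The main technical obstacle is the rigorous justification of this functional-analytic setup in an infinite-volume interacting system; this is handled by running the argument first in a finite periodic box and then taking the infinite-volume limit.
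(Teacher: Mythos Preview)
Your proposal is correct and takes a genuinely different route from the paper. The paper follows Peyre's probabilistic argument: it forms the martingale $M_t = \xi(\X_t) - \xi(\X_0) - \int_0^t \L\xi\,\d s$ with $\xi(z) = |z-x|$, couples two independent copies of the process started from $x$ and $y$, uses reversibility to obtain $\E[M_t^x - M_t^y \mid \X_t^x = y,\ \X_t^y = x] = 2|y-x|$, and concludes via Jensen's inequality together with a Freedman-type exponential supermartingale bound (the jumps of $M$ are bounded by $1$). Your Davies method is the operator-theoretic counterpart; the observation that makes it go through cleanly here is that the tilt $\phi = \theta\,\hat x\cdot X$ interacts only with the $2d$ edges incident to the tagged particle, so $e^\phi \L e^{-\phi} - \L$ is in fact a \emph{bounded} operator on $L^2(\pi)$ and no finite-volume approximation is strictly needed (your proposed periodic reduction would in any case require modification, since the linear tilt is not periodic --- a box with closed boundary, as in the paper's $\Omega_\ell$, would work instead). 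The martingale route has the advantage of staying entirely at the level of the process and avoiding any functional-analytic setup; the Davies route is more systematic and delivers directly the slightly more general two-function estimate $\langle f, P_t g\rangle_\pi \le \|e^{-\phi}f\|_2\,\|e^\phi g\|_2\, e^{2(\cosh\theta-1)t}$.
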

The bound obtained in Theorem~\ref{t.main} is optimal, as we now explain. By the annealed central limit theorem \cite{kipvar}, there exists a constant $c(d,\rho) > 0$ such that for every $t$ sufficiently large,
\begin{equation*}  
\sum_{|x| \le \sqrt{t}} \la \PP_{(X,\eta)} \Ll[ \X_t = x \Rr] \ | \ X = 0 \ra \ge c.
\end{equation*}
Up to a redefinition of $c(d,\rho) > 0$, it thus follows by Jensen's inequality that
\begin{equation*}  
\Ll(t^{-\frac d 2} \sum_{|x| \le \sqrt{t}}  \la \Ll( \mathbf{P}_{(X,\eta)} \Ll[ \mathbf{X}_t = x  \Rr]  \Rr)^p  \, | \, X = 0 \ra_\rho \Rr)^\frac 1 p \ge c t^{-\frac d 2}.
\end{equation*}
This implies the bound converse to \eqref{e.main}, up to a multiplicative constant. It also immediately yields the existence of an $x \in \Z^d$ such that the left side of \eqref{e.cmain} is bounded from below by $c t^{-\frac d 2}$, up to a redefinition of $c(d,\rho) > 0$.

\subsection{Sketch of proof for the standard heat equation}
\label{ss.standard}
Our strategy is inspired by the following argument for the relaxation of the standard heat equation. Let $u$ be the parabolic Green function with the pole at the origin, i.e.\ the decaying solution to 
\begin{equation}  \label{e.standard-heat}
\Ll\{
\begin{aligned}  
\partial_t u &= \Delta u \ \hspace{0.5cm}\text{in } (0,+\infty) \times \R, \\
u( t=0, \cdot) &= \delta_0( \cdot)  \ \hspace{0.3cm}\text{in } \R
\end{aligned}
\Rr.
\end{equation}
Our core goal (compare with Theorem~\ref{HKbound} below) is to control the decay of monotone quantities of the form $\int_{\R^d} u^p(t,\cdot)$, for $p \ge 2$. We focus on the case $p = 2$ for simplicity, and present a robust argument, which will be adapted to the particle system, for the well-known fact that
\begin{equation}  \label{e.diff.parab}
\int_\Rd u^2(t,x) \, \d x \le C \, t^{-\frac d 2}.
\end{equation}
We give ourselves a partition of $\Rd$ into boxes of size $\ell$, and for each $x \in \Rd$, we denote by $B_\ell(x)$ the box of this partition containing $x$. We start by writing 
\begin{equation}  \label{e.decomp.Green}
\int_\Rd u^2(t, x)\, \d x \le 2 \int_\Rd \Ll( u(t,x) - \fint_{B_\ell(x)} u(t,\cdot) \Rr)^2 \, \d x + 2 \int_\Rd \Ll(  \fint_{B_\ell(x)} u(t,\cdot) \Rr)^2 \, \d x,
\end{equation}
where $\fint_{B_\ell(x)} := |B_\ell(x)|^{-1} \int_{B_\ell(x)}$ is the normalized integral. 
For the first term, Poincar\'e's inequality ensures that
\begin{equation}  \label{e.poincare}
\int_\Rd \Ll( u(t,x) - \fint_{B_\ell(x)} u(t,\cdot) \Rr)^2 \, \d x  \le C_P(d) \ell^2 \int_{\R^d} |\nabla u(t, x)|^2 \, \d x,
\end{equation}
and moreover,
\begin{equation*}  
\partial_t \int_\Rd u^2(t,\cdot) = - 2 \int_{\R^d} |\nabla u(t,\cdot)|^2.
\end{equation*}
Therefore, in a time-averaged sense, the first term on the right side of \eqref{e.decomp.Green} is dominated by the left side, provided that $\ell \le c \sqrt{t}$ with $c$ sufficiently small. It therefore suffices to control the second term on the right side of \eqref{e.decomp.Green}. Since $\int_\Rd u(t,\cdot)\equiv 1$ is independent of time, we get
\begin{equation*}  
\int_\Rd \Ll(  \fint_{B_\ell(x)} u(t,\cdot) \Rr)^2 \, \d x \le \int_\Rd |B_\ell(x)|^{-1} \Ll(\fint_{B_\ell(x)} u(t,\cdot) \Rr)\, \d x = |B_\ell|^{-1}.
\end{equation*}
Choosing $\ell = c \sqrt{t}$ completes our sketch of proof for \eqref{e.diff.parab}.

\subsection{Difficulties in the case of the exclusion process}
We now discuss the encountered problems, and the required modifications to the argument described above, in our context of a tagged particle in a symmetric exclusion process.

The most visible difficulties in obtaining heat kernel bounds for the tagged particle are that the environment in which the particle evolves changes over time, and that the jump rates may degenerate to zero due to the exclusion mechanism.

Optimal heat kernel estimates for degenerate dynamic environments satisfying some mild assumptions were obtained in \cite{nash2}. These results cover in particular the case of a diffusion with symmetric, possibly vanishing jump rates that depend locally on an auxiliary exclusion process at equilibrium. 

The latter process is however fundamentally different from the one we consider here. Indeed, in the situation considered in \cite{nash2}, and more generally in the context of stochastic homogenization, one can first sample the dynamic or static random environment beforehand, and \emph{then} define a diffusion with the given coefficients. In contrast, in the setting we study here, the tagged particle and the bath of all the other, untagged particles cannot be thus disentangled. There is a ``retro-action'' of the particle onto its environment, which makes the approach of \cite{nash2} inapplicable. This is the core difficulty of the problem. Mathematically, this is immediately apparent when we try to write down a differential equation analogous to \eqref{e.standard-heat} for quantities such as $\PP_{(x,\eta)}[\X_t = 0]$: there is no closed equation for this quantity if we only allow~$x$ and $t$ to vary, but not~$\eta$. Similarly, for random walks in static or dynamic random environments, quantities such as the left side of \eqref{e.diff.parab} are monotone almost surely. This is not the case in our setting, and only averaged monotone quantities will be available to us. 

\smallskip

In spite of these difficulties, we will show here how to adapt the argument exposed in the previous subsection and obtain heat kernel estimates for the tagged particle. We replace the standard Poincar\'e inequality used in \eqref{e.poincare} by spectral gap inequalities for the dynamics in finite volume. The proof of these inequalities requires some care, due to the degeneracy of the rates. Moreover, since the dynamics preserves the number of particles, these inequalities will hold only if we condition on having a fixed number of particles in the box under consideration.

In the analysis of the analogue to the first term on the right side of \eqref{e.decomp.Green}, our need to fix the number of particles in individual boxes forces the appearance of conditional measures in the analogue to the last term of \eqref{e.decomp.Green}. In other words, instead of quantities such as
$\int u(t,\cdot),$
we will have to estimate the expectation of a similar quantity with the integrand multiplied by the space-dependent densities of the conditional measures. These densities are highly singular, since they concentrate on very thin sets of fixed number of particles inside a region. We first bound these densities independently of the space variable, and then use the reversibility of the dynamics to transfer the evolution onto this density. For this term, the tagged particle is irrelevant, and we can use $L^1$ contraction in the environment variable only. We then leverage on the locality of the initial condition $f = u(0,\cdot)$ to conclude.

\subsection{Outline of the paper}
In the next section, we introduce the notation and present the general result of the form of \eqref{e.diff.parab} that we will prove, see Theorem~\ref{HKbound}. In Section~\ref{s.spectral.gap}, we show a spectral gap with optimal scaling for the joint process of the tagged particle and the exclusion process in finite volume. Section~\ref{s.hk} starts with a proof of the Carne-Varopoulos bound, from which we deduce a convenient localization property. The rest of the section then implements the strategy sketched above. 

\section{Notation and reformulation}
\label{setting}
We fix an integer $d \ge 2$. 
We say that $x,y \in \Z^d$ are neighbors, and write $x \sim y$, if $|x-y| = 1$, where $|\cdot|$ is the Euclidean distance. This turns $\Z^d$ into a graph, and we denote by $\B$ the associated set of (unoriented) edges. 
For any positive integer $\ell$, we denote by $B_\ell$ the box $\{-\ell,\ldots, \ell\}^d$, and by $\B_\ell$ the set of edges with both end-points in $B_\ell$. We let 
$$
\Omega_\ell := \Ll\{(x,\eta) \in B_\ell \times \{0,1\}^{B_\ell} \ : \ \eta(x) = 1 \Rr\},
$$
$$
\Omega := \Ll\{(x,\eta) \in \Z^d \times \{0,1\}^{\Z^d} \ : \ \eta(x) = 1 \Rr\}.
$$
For $e \in \B$ and $x \in \Z^d$, we denote
$$
x^e =
\Ll| 
\begin{array}{ll}
y & \text{if } e = \{x,y\}, \\
x & \text{if } e \nowns x.
\end{array}
\Rr.
$$
In other words, $x^e$ is the image of $x$ by the transposition between the two endpoints of the edge $e$. For $\eta \in \{0,1\}^{\Z^d}$ (or $\{0,1\}^{B_\ell}$),  $\eta^e$ is the configuration such that for every $x$, $\eta^e(x) = \eta(x^e)$. For a function $f : \Omega \to \R$ (or $\Omega_\ell \to \R$), we define $f^e(x,\eta):=f(x^e, \eta^e)$. 
We study the symmetric, simple exclusion process with a tagged particle. This is the dynamics associated with the infinitesimal generator~$\L$ formally acting on a random variable $f : \Omega  \to \R$ as
$$
\L f = \sum_{e \in \B} a_e (f^e - f),
$$
where 
\begin{equation}
\label{e.def.ae}
a_e(\eta) := \1_{\{\eta^e \neq \eta\}}.
\end{equation}
We also consider the finite-volume counterparts,
$$
\L_\ell f= \sum_{e \in \B_\ell} a_e (f^e -f),
$$
where now $f : \Omega_\ell \to \R$. The  dynamics associated with $\L_\ell$ takes place in $\Omega_\ell$ and preserves the number of particles; one can check that for every $\rho \in \{0,\ldots, |B_\ell|\}/|B_\ell|$, 
the uniform measure on the set
$$
\Ll\{ (x,\eta) \in \Omega_\ell \ : \ \sum_{z \in B_\ell} \eta(z) = \rho |B_\ell| \Rr\}
$$
is reversible for the dynamics (that is, the operator $\L_\ell$ is symmetric with respect to this measure).
We denote this measure by $\la \, \cdot \, \ra_{\ell,\rho}$.
With a slight abuse of notation (since we also use $\eta$ to denote a deterministic quantity), we write $(X,\eta)$ for the canonical random variable on $\Omega_\ell$ (or $\Omega$). 
For general $\rho \in [0,1]$, we understand $\la \, \cdot \, \ra_{\ell,\rho}$ to be $\la \, \cdot \, \ra_{\ell,\lfloor \rho |B_\ell| \rfloor/|B_\ell|}$. For any $x \in \Z^d$, we also define $\la \, \cdot \, |\, X = x \ra_{\rho}$ 
to be the measure under which $X = x$ almost surely (and thus $\eta(x) = 1$) and $(\eta(y))_{y \notin x}$ are independent Bernoulli random variables with parameter $\rho$. When no ambiguity occurs, we may abuse notation and write
\begin{equation*}  
\langle \, \cdot \, | \, x \rangle := \langle \, \cdot \, | \, X = x \rangle_{\rho}.
\end{equation*}
 For each $A \subset \Zd$, we denote by $\mcl F(A)$
the $\sigma$-algebra generated by the random variables $(\eta(x), x \in A)$.
We extend the notion of $\mcl F(A)$-measurable random variable to functions defined on $\Omega$ or $\Omega_\ell$ as follows. A function $f : \Omega \to \R$ (resp.\ $\Omega_\ell \to \R$) is said to be $\mcl F(A)$-measurable if for every $x \in \Z^d$ (resp. $B_\ell$), the random variable $f(x,\cdot)$ is $\mcl F(A)$-measurable. 
For every $p \in [1,\infty]$ and measurable $f : \Omega \to \R$, we define
\begin{equation}
\label{e.def.lp}
\|f\|_{L^p(\rho)} := \Ll( \sum_{x \in \Z^d} \la |f|^p \, | \, X= x \ra_{\rho} \Rr)^{\frac 1 p} = \Ll( \sum_{x \in \Z^d} \la |f|^p \, | \, x \ra \Rr)^{\frac 1 p},
\end{equation} 
with the usual interpretation as a supremum if $p = \infty$. In most places, the value of~$\rho$ will be clear from the context, so that we simply write $\|f\|_p := \|f\|_{L^p(\rho)}$ and keep the dependence on $\rho$ implicit.

For an integer $r \ge 0$, we say that a function $f : \Omega \to \R$ is $B_r$-\emph{local} if
\begin{align*}  
& \mbox{$f$ is $\mcl F(B_r)$-measurable, and} \\
& \mbox{for every $x \in \Zd \setminus B_r$},  \ \  f(x,\eta) = 0 .
\end{align*}
We say that a function is \emph{local} if it is $B_r$-local for some $r <+\infty$. 

For a local function $f$, we define $u: \R_+ \times \Omega \to \R$ as the unique bounded solution (see e.g.\ \cite[Theorem~4.68]{lig0} or \cite[Theorem~B.3]{lig}) to
\begin{equation}
\label{e.parab}
\Ll\{ 
\begin{array}{ll}
\partial_t u = \L u , \\
u(0,\cdot) = f(\cdot).
\end{array}
\Rr. 
\end{equation}
We may also write $P_t f(\cdot) = u_t(\cdot) = u(t,\cdot)$, where $P_t$ denotes the semigroup associated with the generator $\L$. Note that in the above expressions, the single dot represents an element of $\Omega$, which is a subset of the product space $\Z^d \times \{0,1\}^{\Zd}$. In other words, an overly scrupulous notation for $u(t,x,\eta)$ would be $u(t,(x,\eta))$.
Throughout the paper we use the notation $a\les b$ in proofs, to denote $a\leq Cb$ for some constant $C < \infty$ which may depend on some additional parameters as specified in the statement to be proved.

The main result of this paper, Theorem~\ref{t.main}, is an immediate consequence of the following estimate on monotone quantities.
\begin{theorem}\label{HKbound}
Let $\rho \in (0,1)$, $f$ be a local function, and $u_t(\cdot) = u(t,\cdot)$ be the solution to equation \eqref{e.parab}. For every $p \ge 2$, there exists a constant $C(d,\rho,f, p) < \infty$ such that for every $t > 0$,
\begin{equation}
\label{e.goal}
\|u_t\|_{L^p(\rho)}^{p} \leq C \, t^{(1-p)\frac d 2}.
\end{equation}
\end{theorem}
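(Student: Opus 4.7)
The plan is to mirror Section~\ref{ss.standard}, with the spectral gap of Section~\ref{s.spectral.gap} in place of Poincar\'e. The starting point is that $t \mapsto \|u_t\|_p^p$ is non-increasing: the generator $\L$ is reversible with respect to the $\sigma$-finite measure $m_\rho := \sum_{x \in \Zd} \la \, \cdot \, | \, x\ra_\rho$ on $\Omega$, under which $\|u_t\|_p^p = \int |u_t|^p\,\d m_\rho$, and $(a-b)(a^{p-1}-b^{p-1}) \ge 0$ for $a,b \ge 0$ gives the sign of the derivative; for $p=2$ one recovers the Dirichlet identity $\frac{\d}{\d t}\|u_t\|_2^2 = -\sum_{x \in \Zd} \la \sum_{e \in \B} a_e (u_t^e - u_t)^2 \, | \, x\ra_\rho$. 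The aim is a Nash-type doubling recursion
\[
\|u_{2t}\|_p^p \le \tfrac12 \|u_t\|_p^p + C\, t^{(1-p)\frac{d}{2}}
\]
for $t$ large, which iterates to \eqref{e.goal} since $\|f\|_p < \infty$ by locality of $f$.

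Fix $\ell = c\sqrt{t}$ with $c$ small, partition $\Zd$ into disjoint boxes $\{B_\ell^{(i)}\}$ of side $\sim \ell$, and for each box define $\bar u_t^{(i)}(x,\eta)$ by averaging $u_t$ (with the tagged-particle position $x$ included in the symmetrization when $x \in B_\ell^{(i)}$) over all configurations that agree with $(x,\eta)$ outside $B_\ell^{(i)}$ and share the same particle count in $B_\ell^{(i)}$. The inequality $|u_t|^p \le 2^{p-1}|u_t - \bar u_t^{(i)}|^p + 2^{p-1}|\bar u_t^{(i)}|^p$ then splits $\|u_t\|_p^p$ into a fluctuation and an averaged contribution, in direct analogy with \eqref{e.decomp.Green}. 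The fluctuation term is controlled by the finite-volume spectral gap of Section~\ref{s.spectral.gap}, applied fiberwise on each box at fixed particle count: for $p = 2$ this bounds the fluctuation by $C\ell^2$ times the global Dirichlet energy $-\tfrac12 \frac{\d}{\d t}\|u_t\|_2^2$, and integration in time over $[t,2t]$ combined with monotonicity yields $\frac{C\ell^2}{t}\|u_t\|_2^2 \le \tfrac14 \|u_t\|_2^2$ for $c$ sufficiently small. For general $p \ge 2$, the same argument applies after passing to $v := u_t^{p/2}$ via the elementary inequality $(a-b)(a^{p-1}-b^{p-1}) \ge c_p (a^{p/2} - b^{p/2})^2$, so that the $L^2$ spectral gap on fibers may be applied to~$v$.

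The averaged term is the main obstacle, as anticipated at the end of Section~\ref{ss.standard}. No direct pointwise bound on $\bar u_t^{(i)}$ is available (unlike the standard case where $\fint u \le |B_\ell|^{-1}$ follows from $\int u \equiv 1$), because the conditioning involves integration against a density $g_i$ on a low-dimensional slice (fixed particle count) that is highly singular. The strategy is to isolate $g_i$ and exploit reversibility: bound $g_i$ in $L^\infty$ (polynomial in $|B_\ell|$, of order $|B_\ell|^{1/2}$ by Stirling), then use $\la u_t \cdot g_i \ra = \la (P_t f)\cdot g_i \ra = \la f \cdot P_t g_i \ra$ to transfer the time evolution onto $g_i$. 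Because $g_i$ depends only on $\eta$, it suffices to establish an $L^1$-contraction estimate for $P_t$ applied to a local, environment-only function; the tagged-particle coordinate plays no role at this step. Locality of $f$ supplies $\|f\|_1 < \infty$, and careful accounting produces an overall bound of order $|B_\ell|^{-(p-1)}$, matching $t^{(1-p)d/2}$ when $\ell = c\sqrt t$. Together with the fluctuation estimate this closes the recursion. The Carne-Varopoulos bound of Theorem~\ref{t.carne} may be invoked at the outset to restrict the outer summation to boxes within distance $\sim \sqrt{t\log t}$ of the origin, so that all manipulations involve finitely many terms. The hardest ingredients are the correct formulation of the conditional density and the $L^\infty$-bound thereon, together with the bookkeeping needed to respect the tagged-particle constraint throughout.
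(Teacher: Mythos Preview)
Your outline follows the paper's strategy closely: decomposition into fluctuation plus local average, spectral gap on the fluctuation, reversibility plus $L^1$-contraction on the averaged term, and Carne--Varopoulos for localization. Two points deserve correction.

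First, the $L^\infty$ bound on the conditional density is a red herring. The paper never uses it, and in its setup (conditioning on the full vector $\M^\ell_L$ of particle counts in all boxes of $B_L$) the density $\tilde h^M$ has $L^\infty$ norm of order $|B_\ell|^{m/2}$ with $m\sim \delta^{-d}\log^{2d}t$ boxes, which is far from polynomial. What is actually used is only that $\tilde h^M$ is a probability density, $\la \tilde h^M\ra = 1$; after transferring the evolution by reversibility, $\la P_t \tilde h^M \,|\, y\ra \le \rho^{-1}\la \tilde P_t \tilde h^M\ra = \rho^{-1}$ by $L^1$-contraction of the Kawasaki semigroup. Your sketch does mention the $L^1$-contraction, so the idea is there, but the $L^\infty$ step should be dropped entirely.

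Second, the localization is more delicate than ``restrict the outer summation''. Restricting $x$ to $B_L$ is not enough: the function $u_t$ still depends on $\eta$ outside $B_L$, and the spectral-gap step (Proposition~\ref{p.second.spectral.gap}) requires an $\mcl F(B_L)$-measurable input. The paper therefore first replaces $u_t$ by its conditional expectation $\cA_L u_t := \la u_t \,|\, \mcl F(B_L)\ra$, and Proposition~\ref{p.localization} shows $\|u_t\|_{2p} - \bigl(\sum_{x\in B_L}\la(\cA_L u_t)^{2p}\,|\,x\ra\bigr)^{1/(2p)}$ is super-polynomially small. This is not a one-line consequence of Carne--Varopoulos: it requires a Gronwall argument on the weighted quantity $U_{m,L,\alpha}$ (Lemmas~\ref{l.bdde}--\ref{l.gronwall}), handling boundary Dirichlet terms where edges cross $\partial B_k$. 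Relatedly, the paper conditions on the whole vector $\M^\ell_L$ rather than a single box; after the $\cA_L$ projection this makes $\pi^\ell_L u_t$ a function of finitely many variables, which is what allows the averaged-term computation to close cleanly.
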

Recalling that the left side of \eqref{e.goal} equals to
\begin{equation*}  
\sum_{x \in \Z^d} \la |u_t|^p \, | \, x \ra ,
\end{equation*}
we see that inequality \eqref{e.goal} is consistent with the idea that only those summands indexed by~$x$ in a ball of radius about $\sqrt{t}$ contribute to the sum, and that each of these summands is bounded by about $t^{-p\frac d 2}$. The constant in Theorem~\ref{HKbound} can be chosen to hold uniformly over $\rho$ bounded away from $1$.

We denote the stochastic process associated with the infinitesimal generator $\L$ by $(\X_t,\bfeta_t)_{t \ge 0}$ (see \cite{lig} for a construction), by $\PP_{(x,\eta)}$ its law starting from $(x,\eta) \in \Omega$, and by $\EE_{(x,\eta)}$ the associated expectation. This is the joint process of the tagged particle and the bath of the other, mutually indistinguishable particles. By \cite[Theorem~3.16]{lig0}, the solution to \eqref{e.parab} admits the probabilistic representation
\begin{equation}
\label{e.probab.rep}
u(t,x,\eta) = \EE_{(x,\eta)} \Ll[ f(\X_t,\bfeta_t) \Rr] .
\end{equation}

\begin{proof}[Proofs of Theorem~\ref{t.main} and Corollary~\ref{c.main} from Theorem~\ref{HKbound}]
We define the local function
\begin{equation*}  \label{e.}
f(x,\eta) := \1_{x = 0},
\end{equation*}
so that if $u$ solves \eqref{e.parab} with this choice of $f$, then by \eqref{e.probab.rep}, for every $t  > 0$,
\begin{equation*}  \label{e.}
u_t(x,\eta) = \PP_{(x,\eta)} \Ll[ \X_t=0 \Rr] .
\end{equation*}
By Theorem~\ref{HKbound}, for each $p \ge 2$, there exists a constant $C(d,\rho,p) < \infty$ such that
\begin{equation*}  
\sum_{x \in \Zd} \la \Ll( \mathbf{P}_{(X,\eta)} \Ll[ \mathbf{X}_t = 0  \Rr]  \Rr)^p  \, | \, X = x \ra_\rho  \le C \, {t^{\Ll(1-p\Rr)\frac d 2}}.
\end{equation*}
Theorem~\ref{t.main} follows, since by stationarity, we have
\begin{equation*}  
\la \Ll( \mathbf{P}_{(X,\eta)} \Ll[ \mathbf{X}_t = x  \Rr]  \Rr)^p  \, | \ X = 0 \ra_\rho   = \la \Ll( \mathbf{P}_{(X,\eta)} \Ll[ \mathbf{X}_t = 0 \Rr]  \Rr)^p  \, | \ X = -x \ra_\rho . 
\end{equation*}

By Jensen's inequality and Theorem~\ref{t.main}, for every $q \ge p$, we have
\begin{align*}  
\la \Ll( \mathbf{P}_{(X,\eta)} \Ll[ \mathbf{X}_t = x  \Rr]  \Rr)^p  \, | \ X = 0 \ra_\rho ^\frac 1 p  
& \le \la \Ll( \mathbf{P}_{(X,\eta)} \Ll[ \mathbf{X}_t = x  \Rr]  \Rr)^q  \, | \ X = 0 \ra_\rho ^\frac 1 q  \\
& \le C(d,\rho,q) \, t^{- \Ll( 1-\frac 1 q \Rr) \frac d 2}.
\end{align*}
By choosing $q$ sufficiently large, we obtain Corollary~\ref{c.main}.
\end{proof}

\section{Spectral gap inequalities}
\label{s.spectral.gap}
In this section, we show as a first ingredient towards the proof of  Theorem \ref{HKbound} that the joint process of the tagged particle and the set of all the other (indistinguishable) particles, restricted to a box of size $\ell$, relaxes over a time scale of $\ell^2$. This takes the form of the following spectral gap inequalities.
\begin{theorem}[Spectral gap]
\label{t.spectral-gap}
For every $\rho\in (0, 1)$, there exists $C_S(d,\rho) < \infty$ which increases with respect to $\rho$ and such that for every  $\ell \in \Z_{\geq1}$ and $f : \Omega_\ell \to \R$, we have
$$
\la \Ll( f - \la f \ra_{\ell,\rho} \Rr)^2 \ra_{\ell,\rho} \le C_S \ell^2 \sum_{e \in \B_\ell} \la a_e (f^e  - f)^2 \ra_{\ell,\rho}.
$$
\end{theorem}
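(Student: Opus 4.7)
My plan is to reduce Theorem~\ref{t.spectral-gap} to the classical $O(\ell^{-2})$ spectral gap for the symmetric simple exclusion process on $B_\ell$ at fixed particle number---i.e., for every $g:\{0,1\}^{B_\ell}\cap\{\sum\eta=k\}\to\R$,
$$
\la(g-\la g\ra_{\ell,\rho})^2\ra_{\ell,\rho}\le C\ell^2\sum_{e\in\B_\ell}\la(g(\eta^e)-g(\eta))^2\ra_{\ell,\rho},
$$
which is a classical fact due to Quastel and Lu--Yau. The link is the conditional variance decomposition
$$
\la(f-\la f\ra_{\ell,\rho})^2\ra_{\ell,\rho}\le 2\,\mathrm{Var}_\eta(\bar f)+2\la\mathrm{Var}(f\,|\,\eta)\ra_\eta,
$$
where $\bar f(\eta):=k^{-1}\sum_{x:\eta(x)=1}f(x,\eta)$, $k=\lfloor\rho|B_\ell|\rfloor$, and each term is then treated separately.

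The $\mathrm{Var}_\eta(\bar f)$ term is the easy one: apply the SSEP gap to $\bar f$, and bound the resulting Dirichlet form by $\mathcal{E}(f)$. The key algebraic identity is $\bar f(\eta^e)=k^{-1}\sum_{x:\eta(x)=1}f^e(x,\eta)$, obtained via the change of variables $y\mapsto y^e$ in the defining sum after using $\eta^e(x^e)=\eta(x)$. Combined with Jensen's inequality,
$$
(\bar f(\eta^e)-\bar f(\eta))^2\le k^{-1}\sum_{x:\eta(x)=1}(f^e(x,\eta)-f(x,\eta))^2.
$$
Since $X$ is uniform on $\{\eta=1\}$ under the conditional measure $\la\cdot\,|\,\eta\ra$, averaging in $\eta$ turns the right side into $\la(f^e-f)^2\ra_{\ell,\rho}$; both sides vanish when $\eta^e=\eta$, so the indicator $a_e$ can be inserted freely, yielding $\mathrm{Var}_\eta(\bar f)\le C\ell^2\sum_e\la a_e(f^e-f)^2\ra_{\ell,\rho}$.

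The conditional variance $\la\mathrm{Var}(f\,|\,\eta)\ra_\eta$ is the technical heart. Since conditionally on $\eta$ the variable $X$ is uniform on $\{\eta=1\}$,
$$
\mathrm{Var}(f\,|\,\eta)=\frac{1}{2k^2}\sum_{x,z:\eta(x)=\eta(z)=1}(f(x,\eta)-f(z,\eta))^2.
$$
For each triple $(x,z,\eta)$ I would build a canonical path $\gamma_{x,z,\eta}$ of allowed $\L_\ell$-transitions leading from $(x,\eta)$ to $(z,\eta)$. Telescoping with Cauchy--Schwarz bounds each summand by $|\gamma_{x,z,\eta}|\sum_{(\xi,e)\in\gamma_{x,z,\eta}}(f^e(\xi)-f(\xi))^2$, and collecting contributions transition-by-transition reduces matters to (i) a path-length estimate $|\gamma|\le C(d,\rho)\ell$ and (ii) a congestion estimate on each transition $(\xi,e)$ of order $|\Omega_{\ell,k}|\cdot\ell/\ell^d$. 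The path is constructed by sliding the tag along a lattice geodesic from $x$ to $z$, rotating each blocking particle out of the way via an empty site found in a bounded neighbourhood and restoring it afterwards; the assumption $d\ge 2$ is essential to guarantee that one can reroute around each obstacle.

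The main obstacle is producing this canonical path construction \emph{uniformly in $\eta$}. Dense patches, where no empty site sits within bounded distance of the geodesic, make the detours long; such patches are rare under $\la\cdot\ra_{\ell,\rho}$ for fixed $\rho<1$ (by a standard concentration estimate for the uniform measure on $\{\sum\eta=k\}$, comparable to the Bernoulli product case), and the resulting loss can be absorbed into the $\rho$-dependent constant $C_S$, which is expected to blow up as $\rho\to 1$ in agreement with the statement. Verifying that this trade-off yields precisely the $\ell^2$ scaling---rather than, say, $\ell^{2+\delta}$---and doing so with a constant that is monotone in $\rho$ is the subtle point.
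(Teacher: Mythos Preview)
Your decomposition conditions on $\eta$, whereas the paper conditions on $X$; both are valid starting points, but the allocation of difficulty is reversed. Your first term $\mathrm{Var}_\eta(\bar f)$ is handled cleanly via the Kawasaki gap, and the identity $\bar f(\eta^e)=k^{-1}\sum_{x:\eta(x)=1}f^e(x,\eta)$ with Jensen does yield $\la(\bar f(\eta^e)-\bar f(\eta))^2\ra_\eta\le\la a_e(f^e-f)^2\ra_{\ell,\rho}$ (insert $a_e$ on the left first, since that integrand vanishes when $\eta^e=\eta$, and then apply the pointwise bound). The genuine gap is your second term. Conditionally on $\eta$, no transition with $a_e=1$ preserves $\eta$, so your canonical paths from $(x,\eta)$ to $(z,\eta)$ must make excursions through other configurations and return; this is effectively a full Diaconis--Saloff-Coste comparison for the joint tagged chain. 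Your congestion estimate ``$|\Omega_{\ell,k}|\cdot\ell/\ell^d$'' is not justified: to recover the triple $(x,z,\eta)$ from a transition $(\xi,e)$ one must unwind the rerouting history around every dense patch encountered so far, and the bookkeeping is sensitive to exactly the detours you flag as unresolved. Even granting path length $O(\ell)$ for typical $\eta$, integrating the detour tails against a congestion factor that itself depends on those detours, and extracting a clean $\ell^2$ with a constant monotone in $\rho$, is real work that the proposal does not supply.

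The paper sidesteps this by conditioning on $X$ instead. The $\eta$-variance given $X=x$ is then dispatched directly by the Kawasaki gap on $B_\ell$ with one site pinned (Proposition~\ref{p.spec_kawa}). The remaining term $\sum_x\la f\,|\,X=x\ra^2$ is reduced, via the standard Poincar\'e inequality on $B_\ell$, to nearest-neighbour increments $\la f\,|\,X=x\ra-\la f\,|\,X=y\ra=\la f-f^{xy}\,|\,X=x\ra$ for $x\sim y$. Thus paths are needed only to swap two \emph{adjacent} positions: one locates two holes $z_1,z_2$ near the single edge $(x,y)$, performs $O(|z_1-x|+|z_2-z_1|)$ admissible flips to realise the swap, and controls the distances to the nearest holes by a tail of order $\rho^{c r^d}$ under the canonical measure. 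This local (rather than box-spanning) path construction is the key simplification over your plan, and the $\rho^{c r^d}$ tail is precisely what produces a constant increasing in $\rho$.
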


The proof of Theorem~\ref{t.spectral-gap} is inspired by the arguments exposed in \cite{qua}. We rely on the spectral gap of the dynamics of the $\eta$ variable alone, which was proved in \cite[Lemmas~8.2 and~8.3]{qua} and \cite[Theorem~3.1]{diaconis1993comparison}. When no tagged particle is considered, the exclusion rule becomes artificial, in the sense that the dynamics becomes identical to the Kawasaki dynamics, where particles are exchanged along edges at a constant rate.
\begin{proposition}[Spectral gap for Kawasaki dynamics \cite{qua,diaconis1993comparison}]
\label{p.spec_kawa}
There exists a constant $C_K(d) < \infty$ such that for every $\rho \in [0,1]$, $\ell \in \Z_{\geq1}$ and $f : \{0,1\}^{B_\ell} \to \R$,
$$
\la \Ll( f - \la f \ra_{\ell,\rho} \Rr)^2 \ra_{\ell,\rho} \le C_K \ell^2 \sum_{e \in \B_\ell} \la (f(\eta^e)  - f(\eta))^2 \ra_{\ell,\rho},
$$
as well as, for every $x \in \B_\ell$,
$$
\la \Ll( f - \la f \vb \eta(x) = 1 \ra_{\ell,\rho} \Rr)^2 \,\big|\, \eta(x) = 1 \ra_{\ell,\rho} \le C_K \ell^2 \sum_{\substack{e \in \B_\ell \\ e \nowns x}} \la (f(\eta^e)  - f(\eta))^2 \,|\, \eta(x) = 1  \ra_{\ell,\rho}.
$$
\end{proposition}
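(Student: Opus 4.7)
The statement is the classical uniform-in-density Kawasaki spectral gap on a box, as cited from \cite{qua,diaconis1993comparison}; rather than redo it, my plan is to recall the standard comparison strategy, which has two ingredients: the mean-field (Bernoulli--Laplace) gap and a canonical-path argument relating all pairwise swaps to nearest-neighbor swaps. I first note that the second inequality reduces to the first: under $\la \, \cdot \, \vb \eta(x) = 1 \ra_{\ell,\rho}$, the remaining coordinates $(\eta(y))_{y \in B_\ell \setminus \{x\}}$ are uniform on a slice of $\{0,1\}^{B_\ell \setminus \{x\}}$ with one fewer particle, and the allowed edges are exactly those in $\B_\ell$ not containing $x$; running the same proof on this marginal subgraph (which still has diameter $\leq C \ell$ as long as $x$ is removed from a box of side $\ge 2$) produces the second inequality with the same $C_K$ up to a dimensional constant. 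So I focus on the first one.

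\textbf{Step 1 (Mean-field gap).} Let $N = |B_\ell|$ and $k = \lfloor \rho N \rfloor$. Consider the auxiliary ``complete-graph'' generator $\L_{MF} f := \sum_{\{x,y\} \subset B_\ell} (f(\eta^{xy}) - f(\eta))$, which exchanges any two coordinates at rate $1$. Under $\la \cdot \ra_{\ell,\rho}$, this is the Bernoulli--Laplace model; a classical computation (either via the orbit of $k$-subsets for the random-transposition chain of Diaconis--Shahshahani, or a direct recursion in $k$) shows that its spectral gap is of order $N$, uniformly in $k \in \{0,\ldots,N\}$. Equivalently,
\begin{equation}  \label{e.BL.gap}
\la (f - \la f \ra_{\ell,\rho})^2 \ra_{\ell,\rho} \leq \frac{C}{N} \sum_{\{x,y\} \subset B_\ell} \la (f(\eta^{xy}) - f(\eta))^2 \ra_{\ell,\rho},
\end{equation}
with $C$ depending only on dimension. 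The uniformity in $\rho$ is the nontrivial content here.

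\textbf{Step 2 (Canonical paths).} For each unordered pair $\{x,y\} \subset B_\ell$, fix an axis-aligned nearest-neighbor path $\gamma_{xy}$ of length $n(x,y) \leq C_d \ell$. Decompose the transposition $\sigma_{xy}$ as a product of $m \leq 2 n(x,y)$ nearest-neighbor transpositions along $\gamma_{xy}$ (``walk'' the particle from $x$ to $y$ and then move the displaced value back). Writing $\eta = \eta_0, \eta_1, \ldots, \eta_m = \eta^{xy}$ for the intermediate configurations with $\eta_{i+1} = \eta_i^{e_i}$ and $e_i \in \B_\ell$, Cauchy--Schwarz gives
\begin{equation*}
(f(\eta^{xy}) - f(\eta))^2 \leq m \sum_{i=0}^{m-1} (f(\eta_i^{e_i}) - f(\eta_i))^2 \leq C\ell \sum_{i=0}^{m-1} a_{e_i}(\eta_i) (f^{e_i}(\eta_i) - f(\eta_i))^2,
\end{equation*}
where the indicator $a_{e_i}$ can be inserted because the summand vanishes otherwise. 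Taking expectation and noting that each intermediate step is a $\la \cdot \ra_{\ell,\rho}$-preserving bijection, this yields
\begin{equation*}
\la (f^{xy} - f)^2 \ra_{\ell,\rho} \leq C\ell \sum_{e \in \gamma_{xy}} \la a_e (f^e - f)^2 \ra_{\ell,\rho}.
\end{equation*}

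\textbf{Step 3 (Congestion and conclusion).} Summing over $\{x,y\} \subset B_\ell$ and swapping the two sums, the right-hand side becomes $C\ell \sum_{e \in \B_\ell} N(e) \la a_e (f^e - f)^2 \ra_{\ell,\rho}$, where $N(e) := \#\{ \{x,y\} : e \in \gamma_{xy}\}$. For axis-aligned shortest paths one checks directly that $N(e) \leq C \ell \cdot N$ uniformly in $e \in \B_\ell$. Plugging this into \eqref{e.BL.gap},
\begin{equation*}
\la (f - \la f \ra_{\ell,\rho})^2 \ra_{\ell,\rho} \leq \frac{C}{N} \cdot C\ell \cdot C \ell N \sum_{e \in \B_\ell} \la a_e (f^e - f)^2 \ra_{\ell,\rho},
\end{equation*}
which is the desired bound (and in fact slightly sharper than needed, since the indicators $a_e$ appear). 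The main obstacle is Step~1: obtaining the $\rho$-uniform mean-field gap is the heart of the matter and is what the references \cite{qua,diaconis1993comparison} supply; Steps~2--3 are routine applications of the Diaconis--Saloff-Coste path technology, and the congestion bound $N(e) \lesssim \ell N$ is exactly what produces the $\ell^2$ scaling that matches the diffusive heuristic.
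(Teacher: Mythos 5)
Your sketch is consistent with how the paper handles this statement: the paper does not prove Proposition~\ref{p.spec_kawa} at all, but imports it from \cite[Lemmas~8.2 and~8.3]{qua} and \cite[Theorem~3.1]{diaconis1993comparison}, and the argument you outline (the $\rho$-uniform Bernoulli--Laplace gap of order $N$, then Diaconis--Saloff-Coste canonical paths with path length $\lesssim \ell$ and congestion $N(e)\lesssim \ell N$, giving $\tfrac1N\cdot\ell\cdot\ell N=\ell^2$) is exactly the standard comparison proof underlying those citations; your Steps 2--3 are correct as written, including the insertion of the indicators $a_e$ and the use of measure-preserving intermediate configurations. One small correction on your reduction of the second inequality: the relevant hypothesis is the dimension $d\ge 2$, not the side length of the box --- in $d=1$ removing an interior site $x$ disconnects $B_\ell$ and the restricted inequality simply fails, which is why the paper explicitly remarks that the second part of Proposition~\ref{p.spec_kawa} requires $d\ge2$; for $d\ge2$ one must also reroute the canonical paths around $x$, which keeps the diameter $\lesssim\ell$ and the congestion $\lesssim \ell N$, so your conclusion stands after this adjustment.
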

The crucial difference between Theorem~\ref{t.spectral-gap} and Proposition~\ref{p.spec_kawa} is that the function~$f$ in Proposition~\ref{p.spec_kawa} is a function of $\eta$ only, while the the one in Theorem~\ref{t.spectral-gap} also depends on the position of the tagged particle $X$. Note that the second part of Proposition~\ref{p.spec_kawa} relies on the fact that we only consider the case $d \ge 2$. (In fact, only the one-dimensional, nearest-neighbor case needs to be excluded.) 
\begin{proof}[Proof of Theorem~\ref{t.spectral-gap}]
We take $f : \Omega_\ell \to \R$ such that $\la f \ra_{\ell,\rho} = 0$, and write
$$
|B_\ell| \la f^2 \ra_{\ell,\rho} = \sum_{x \in B_\ell}  \la f \,|\, X=x \ra_{\ell,\rho}^2 + \sum_{x \in B_\ell} \la \Ll(f - \la f \vb X=x\ra_{\ell,\rho}\Rr)^2 \,\big|\, X=x \ra_{\ell,\rho},
$$
so the first part on the right side of the above equation represents the variation induced by the tagged particle, and the second part corresponds to the variation induced by the configurations of all other indistinguishable particles. We omit the indices $\rho, \ell$ on $\langle \,\cdot\,\rangle$ and, for an edge $b\in \B_\ell$ we define $\underline b, \overline b \in B_\ell$ as the two end-points of $b$, so that $b=(\underline b,\overline b)$. We apply the standard Poincaré inequality on $B_\ell$ to the first sum above (recalling that $\la f \ra_{\ell,\rho} = 0$), and Proposition~\ref{p.spec_kawa} to the second one, to get
\begin{equation}
\begin{aligned}\label{SG1}
|B_\ell| \langle f^2 \rangle \lesssim &\ \ell^2  \sum_{(x,y) \in \B_\ell} \biggl( \langle f( X, \eta) \,|\, X=x \rangle - \langle f(X, \eta) \,|\, X=y \rangle \biggr)^2\\
&+ \ell^2 \sum_{ x \in B_\ell} \sum_{e \in \B_\ell \atop e \nowns x} \la [ f(X,\eta^e) - f(X, \eta)]^2 \,|\, X=x \ra .
\end{aligned}
\end{equation} 
We first observe that $(x^e, \eta^e) = (x, \eta^e)$ when $x \notin e$, and in addition $\eta^e = \eta$ whenever $a_e(\eta)= 0$, so we may rewrite
\begin{align*}
\sum_{e \in \B_\ell \atop  e \nowns x} \langle [ f(X,\eta^e) - f(X, \eta)]^2 \,|\, X=x \rangle = \sum_{e \in \B_\ell \atop e \nowns x} \langle a_e(\eta)[ f^e(X,\eta) - f(X, \eta)]^2 \,|\, X=x \rangle,
\end{align*}
and thus bound the second term on the right side of \eqref{SG1} by
\begin{align}
\ell^2 \sum_{ x \in B_\ell} \sum_{e \in \B_\ell \atop  e \nowns x} \langle [ f(X,\eta^e) - f(X, \eta)]^2 \,|\, X=x \rangle
& \leq  \ell^2 \sum_{ x \in B_\ell \atop e \in \B_\ell} \langle a_e(\eta)[ f^e(X,\eta) - f(X, \eta)]^2 \,|\, X=x \rangle \notag \\
&  = |B_\ell| \ell^2 \sum_{e \in \B_\ell} \langle a_e( f^e - f)^2 \rangle.
\label{SG2}
\end{align}

We now tackle the first term on the right side of \eqref{SG1}. To lighten the notation, we sometimes write the edge $(x,y)$ as $xy$. By definition, it holds
\begin{align*}
\langle f(X, \eta) \,|\, X= y \rangle = \langle f( X^{xy}, \eta^{xy}) \,|\, X=x \rangle,
\end{align*}
so we may rewrite
\begin{equation}
\begin{aligned}\label{SG8}
&\ell^2  \sum_{(x,y) \in \B_\ell} \bigl(\langle f( X, \eta) \,|\, X=x \rangle - \langle f(X, \eta) \,|\, X=y \rangle \bigr)^2\\
 &= \ell^2  \sum_{(x,y) \in \B_\ell}\bigl( \langle f( X, \eta) - f(X^{xy}, \eta^{xy}) \,|\, X=x \rangle \bigr)^2.
\end{aligned}
\end{equation}
To conclude the proof of Theorem 1, we need to smuggle the coefficient $a$ inside the expectation on the right side of the above equation. For those configurations $(x,\eta)$ with $\eta(x)=\eta(y)=1$ and thus $a^{xy}(\eta) =0$, we want to perform a finite number of flips to exchange $x$ and $y$ in an ``admissible'' way in which we always flip an edge that connects an occupied site with an unoccupied one. In order to do so, we leverage on the presence of two empty sites at positions $z_1$ and $z_2$. We now construct the sequence of flips we will use;  this sequence will only depend on the positions of $x$, $y$, $z_1$ and $z_2$.

\begin{itemize}

\item[(1)] Recall that $x \sim y$, and let $z_1$ and $z_2$ be two holes in $\eta$, at positions distinct from $x$ and $y$. We choose a shortest non-intersecting path in $B_\ell\setminus\{x,y\}$ of the form  
\[
\tilde{x}\to \tilde{y}\to \ldots \to z_1\to \ldots \to z_2,
\]
according to some arbitrary deterministic tie-breaking rule, and in such a way that the four points $x,y,\tilde x, \tilde y$ form a unit square on a plane.

\item[(2)] We flip each edge along the path, starting from the end, until the second hole $z_2$ is next to $z_1$, then we move the two holes back together to $(\tilde{x},\tilde{y})$, so that we get a configuration near $(x,y)$ of the form 
\[
\left[\begin{array}{cc}
* & \bullet \\
\circ & \circ
\end{array}
\right],
\]
where $*$ is the tagged particle at $x$, $\bullet$ is the particle at $y$ (assuming there is one, for the purpose of graphical representation), and we have moved the holes in $z_1,z_2$ to $(\tilde{x},\tilde{y})$, denoted by $\circ$.

\item[(3)] We flip four times to obtain
\[
\left[\begin{array}{cc}
* & \bullet \\
\circ & \circ
\end{array}
\right]\mapsto
\left[\begin{array}{cc}
* & \circ \\
\circ & \bullet
\end{array}
\right]\mapsto
\left[\begin{array}{cc}
\circ & * \\
\circ & \bullet
\end{array}
\right]\mapsto
\left[\begin{array}{cc}
\circ & * \\
\bullet & \circ
\end{array}
\right]\mapsto
\left[\begin{array}{cc}
\bullet & * \\
\circ & \circ
\end{array}
\right].
\]

\item[(4)] We move the two holes at $\tilde{x},\tilde{y}$ back to $z_1,z_2$ along the path.
  
\end{itemize}

We wrote the description of the sequence of flips assuming that $\eta(z_1) = \eta(z_2) = 0$, but this only served as a guide to the explanation; for arbitrary $\eta$, we may define the same sequence of edge flips, the only difference being that the flips are no longer ``allowed'' exclusion flips. In other words, we will think of the sequence of edges selected and flipped in steps (2)-(4) above as a function of $x,y,z_1$ and $z_2$ only, but not of $\eta$. We denote it by
\begin{align}\label{SG5}
S_{x,y,z_1,z_2}:=(b_i(x,y,z_1,z_2))_{i=1}^{n}.
\end{align}
We have $n=n(x,y,z_1,z_2)\lesssim  |z_1-x|+|z_2-z_1|$. By construction, 
\begin{equation}
\label{SG6a}
( x^{b_1... b_n}, \eta^{b_1... b_n} ) = (x^{xy}, \eta^{xy}),
\end{equation}
and for every $\eta$ such that $\eta(z_1) = \eta(z_2) = 0$,
\begin{equation}
\label{SG6b}
a_{b_i}( \eta^{b_1 ... b_{i-1}} ) = 1  \text{\ \ for every $i=1,..., n$. }
\end{equation}

\medskip

For a fixed $x\sim y$, we first define the random variable $Z_1(\eta)$ to be a minimizer of the function
\begin{equation}
\label{e.def.Z1}
z_1\mapsto  \mathrm{length}(\tilde{x}\to \tilde{y}\to \ldots \to z_1),
\end{equation}
among all $z_1 \in \Zd \setminus \{x,y\}$ such that $\eta(z_1) = 0$. We then define $Z_2(\eta)$ to be a minimizer of the function
\begin{equation*}
z_2\mapsto \mathrm{length}(Z_1(\eta)\to \ldots \to z_2),
\end{equation*}
among all $z_2 \in \Zd \setminus \{x,y,Z_1(\eta)\}$ such that $\eta(z_2) = 0$. Since $\rho < 1$, the set of candidate minimizers in both definitions are non-empty for $\ell$ sufficiently large. In both definitions, we break ties according to an arbitrary deterministic rule. We can think of an algorithm for the definition of $Z_1$ that explores each candidate $z_1 \in \Zd \setminus \{x,y\}$ sequentially, starting from the minimizer of \eqref{e.def.Z1} and going increasingly, until a candidate with $\eta(z_1) = 0$ is reached. (We simply need to make sure that the tie-breaking rule defines an ordering between the sites that have the same image through the mapping \eqref{e.def.Z1}.) A similar interpretation holds for the definition of~$Z_2$. We denote by $N_2$ the number of occupied sites thus explored until both $Z_1$ and $Z_2$ are well-defined. 

We write
\begin{align}
 \langle f( X, \eta) - f(X^{xy}, \eta^{xy}) \,|\, X=x \rangle & =  \langle \1_{\{Z_1,Z_2 \in B_{\ell}\}}\bigl( f( X, \eta) - f(X^{xy}, \eta^{xy}) \bigr) \,|\, X=x \rangle \notag\\
&  = \sum_{z_1,z_2\in B_\ell} \langle \1_{\{Z_1 =z_1,Z_2=z_2\}}\bigl( f( X, \eta) - f(X^{xy}, \eta^{xy}) \bigr) \,|\, X=x \rangle.
\label{SG7}
\end{align}
For any fixed tuple $(x,y,z_1,z_2)$, we now consider the (deterministic) set $S_{x,y,z_1,z_2}$ defined in \eqref{SG5}, and use \eqref{SG6a}-\eqref{SG6b} to write 
\begin{equation*}
\begin{aligned}
& \sum_{z_1,z_2\in B_\ell} \langle \1_{\{Z_1=z_1,Z_2=z_2\}}\bigl( f( X, \eta) - f(X^{xy}, \eta^{xy}) \bigr) \,|\, X=x \rangle \\
 &=  \sum_{z_1,z_2\in B_\ell} \la \1_{\{Z_1=z_1,Z_2=z_2\}} \sum_{i=1}^{n} a_{b_i}(\eta^{b_1... b_{i-1}} )D^{b_i}f(X^{b_1... b_{i-1}},\eta^{b_1... b_{i-1}}) \,\bigg|\, X=x \ra,
\end{aligned}
\end{equation*}
where we defined $D^bf(X, \eta):=f(X^b, \eta^b) - f(X, \eta)$; we recall that $n$ depends on $x,y,z_1,z_2$.
By the above equation and \eqref{SG7}, we rewrite \eqref{SG8} as
\[
\begin{aligned}
&\ell^2  \sum_{(x,y) \in \B_\ell} \bigl(\langle f( X, \eta) \,|\, X=x \rangle - \langle f(X, \eta) \,|\, X=y \rangle \bigr)^2\\
 = &\ell^2 \sum_{(x,y) \in \B_\ell}\biggl(\sum_{z_1,z_2 \in B_{\ell} } \la \1_{\{ Z_1=z_1,Z_2=z_2 \}} \sum_{i=1}^{n} a_{b_i}(\eta^{b_1... b_{i-1}} )D^{b_i}f(X^{b_1... b_{i-1}},\eta^{b_1... b_{i-1}})  \,\bigg|\, X=x \ra \biggr)^2.
\end{aligned}
\]
Applying H\"older's inequality first in $\langle  \,\cdot  \,| \,X= x \rangle $ and then in $\sum_{z_1,z_2\in B_{\ell} }$ yields
\begin{equation}\label{SG3}
\begin{aligned}
&\ell^2  \sum_{(x,y) \in \B_\ell} \bigl(\langle f( X, \eta) \,|\, X=x \rangle - \langle f(X, \eta) \,|\, X=y \rangle \bigr)^2 \\
&\leq \ell^2  \sum_{(x,y) \in \B_\ell}\biggl(\sum_{z_1,z_2 \in B_{\ell}}\langle \1_{\{ Z_1=z_1,Z_2=z_2 \}} \,|\, X=x \rangle^\frac12 \biggr) \\
& \times\sum_{z_1,z_2 \in B_{\ell} } \la \1_{\{Z_1=z_1,Z_2=z_2\}} \, |\, X=x\ra^\frac12\la \bigl( \sum_{i=1}^{n} a_{b_i}(\eta^{b_1... b_{i-1}} )D^{b_i}f(X^{b_1... b_{i-1}},\eta^{b_1... b_{i-1}})\bigr)^2\,\bigg|\, X=x \ra.
\end{aligned}
\end{equation}

We now estimate the probability $ \la \1_{\{Z_1=z_1,Z_2=z_2\}} \, |\, X=x\ra$. If we define 
$$
r_1:= |z_1-x|-1 \geq 0, \qquad r_2:=|z_2-z_1|-1\geq 0,
$$ 
then  by the construction of the path in (1) and the definition of $Z_1,Z_2$, there exists a constant $\tilde{c}=\tilde{c}(d) > 0 $ such that 
the total number of occupied sites around $x$ and $z_1$, which we denoted by $N_2$, satisfies $N_2\geq \tilde{c}(r_1^d+r_2^d)$. Let $N=|B_\ell|-1$ be the total number of sites except $x$, and $N_1=\lfloor\rho |B_\ell|\rfloor-1$ be the total number of particles except the tagged particle. 
Since $\langle\, \cdot\, \rangle$ is the uniform measure over $\Omega_{\ell, \rho}$, and there are already $N_2$ occupied sites around $x$ and $z_1$, it follows from Lemma~\ref{l.counting} that 
\[
\langle \1_{\{ Z_1=z_1,Z_2=z_2 \}} \,|\, X=x \rangle \leq  \biggl[\binom{N}{N_1 }\biggr]^{-1}\binom{N-N_2}{N_1-N_2}\les\sqrt{\frac{N_1(N-N_2)}{N(N_1-N_2)}} \left(\frac{N_1}{N}\right)^{N_2}.
\]
If $N_2/N\leq \rho/2$, we have 
\[
\langle \1_{\{ Z_1=z_1,Z_2=z_2 \}} \,|\, X=x \rangle\les \left(\frac{N_1}{N}\right)^{N_2}\les \rho^{\tilde{c}(r_1^d+r_2^d)};
\]
if $N_2/N>\rho/2$, we have 
\[
\langle \1_{\{ Z_1=z_1,Z_2=z_2 \}} \,|\, X=x \rangle\les \sqrt{N}\left(\frac{N_1}{N}\right)^{N_2}\les \sqrt{N}\rho^{\frac{\rho N}{4}}\rho^{\frac{\tilde{c}(r_1^d+r_2^d)}{2}} \les \rho^{\frac{\tilde{c}(r_1^d+r_2^d)}{2}}.
\]
Thus, there exists $c>0$ such that $\langle \1_{\{ Z_1=z_1,Z_2=z_2 \}} \,|\, X=x \rangle\les \rho^{c(r_1^d+r_2^d)}$,
and since
\begin{equation}\label{e.rho1}
\sum_{z_1,z_2 \in B_{\ell}}\langle \1_{\{ Z_1=z_1,Z_2=z_2 \}} \,|\, X=x \rangle^{\frac 1 2}  \les \sum_{r_1,r_2=1}^{+\infty}r_1^{d-1}r_2^{d-1}\rho^{\frac {c(r_1^d+r_2^d)}{2} } < +\infty,
\end{equation}
we estimate in \eqref{SG3}
\begin{align*}
&\ell^2  \sum_{(x,y) \in \B_\ell} \bigl(\langle f( X, \eta) \,|\, X=x \rangle - \langle f(X, \eta) \,|\, X=y \rangle \bigr)^2\\
& \lesssim \ell^2  \sum_{(x,y) \in \B_\ell}\sum_{z_1,z_2 \in B_{\ell}}\rho^{\frac{c|z_1-x|^d}{2}+\frac{c|z_2-z_1|^d}{2}} \la \bigl( \sum_{i=1}^{n} a_{b_i}(\eta^{b_1... b_{i-1}} )D^{b_i}f(X^{b_1... b_{i-1}},\eta^{b_1... b_{i-1}})\bigr)^2\,\bigg|\, X=x \ra.
\end{align*}
By applying the Cauchy-Schwarz inequality to the innermost sum and recalling that $n=n(x,y,z_1,z_2) \lesssim |z_1-x|+|z_2-z_1|$, we further obtain
\begin{equation*}
\begin{aligned}
&\ell^2 \sum_{(x,y) \in \B_\ell} \bigl(\langle f( X, \eta) \,|\, X=x \rangle - \langle f(X, \eta) \,|\, X=y \rangle \bigr)^2\\
& \lesssim \ell^2 \sum_{(x,y)\in \B_\ell}\sum_{z_1,z_2 \in B_{\ell}}(|z_1-x|+|z_2-z_1|)\rho^{\frac{c|z_1-x|^d}{2}+\frac{c|z_2-z_1|^d}{2}} \\
&\hspace{3cm}\times \sum_{i=1}^{n}\la a_{b_i}(\eta^{b_1... b_{i-1}} )\bigl(D^{b_i}f(X^{b_1... b_{i-1}},\eta^{b_1... b_{i-1}})\bigr)^2\,\bigg|\, X=x \ra.
\end{aligned}
\end{equation*}
Since the argument inside $\langle \cdot \, |\, X=x \rangle$ is non-negative, we use the crude bound
\begin{equation}\label{SG4}
\begin{aligned}
&\la a_{b_i}(\eta^{b_1... b_{i-1}} )\bigl(D^{b_i}f(X^{b_1... b_{i-1}},\eta^{b_1... b_{i-1}})\bigr)^2 \,\big|\, X=x \ra \\
 &= \la a_{b_i}(\eta )\bigl(D^{b_i}f(X,\eta)\bigr)^2 \,\big|\, X= x^{b_1...b_{i-1}} \ra \\
&\leq \sum_{\tilde x\in B_\ell} \la a_{b_i}(\eta )\bigl(D^{b_i}f(X,\eta)\bigr)^2\,\big|\, X=\tilde x \ra = | B_\ell|\la a_{b_i}\bigl(D^{b_i}f \bigr)^2\ra,
\end{aligned}
\end{equation}
where for the ``='' we used the invariance of the measure under flips. Therefore
\begin{align*}
&\ell^2 \sum_{(x,y) \in \B_\ell} \bigl(\langle f( X, \eta) \,|\, X=x \rangle - \langle f(X, \eta) \,|\, X=y \rangle \bigr)^2\\
& \lesssim  \ell^2|B_\ell| \sum_{(x,y) \in \B_\ell}\sum_{z_1,z_2 \in B_{\ell}}(|z_1-x|+|z_2-z_1|)\rho^{\frac{c|z_1-x|^d}{2}+\frac{c|z_2-z_1|^d}{2}} \sum_{i=1}^{n(x,y,z_1,z_2)} \la a_{b_i} \bigl(D^{b_i}f\bigr)^2 \ra\\
&
\les
\ell^2|B_\ell|\sum_{(x,y) \in \B_\ell}\sum_{z_1,z_2 \in \Z^d}(|z_1|+|z_2|)\rho^{\frac{c|z_1|^d}{2}+\frac{c|z_2|^d}{2}}  \sum_{i=1}^{n(x,y,x+z_1,x+z_1+z_2)} \la a_{b_i} \bigl(D^{b_i}f\bigr)^2 \ra.
\end{align*}
For each $z_1,z_2$ fixed, by our construction of $S_{x,y,x+z_1,x+z_1+z_2}$ we observe that in the double sum $\sum_{(x,y) \in \B_\ell}\sum_{i=1}^{n(x,y,x+z_1,x+z_1+z_2)}$, each edge $b_i \in \B_\ell$ is repeated  $\lesssim |z_1|^d+|z_2|^d$ times. The non-negativity of the argument in $\langle \cdot \rangle$ allows us to estimate
\begin{align*}
\sum_{(x,y) \in \B_\ell}\sum_{i=1}^{n(x,y,x+z_1,x+z_1+z_2)} \la a_{b_i} \bigl(D^{b_i}f\bigr)^2 \ra\lesssim (|z_1|^d+|z_2|^d)\sum_{e \in \B_\ell}\la a_{e} \bigl(D^{e}f\bigr)^2 \ra
\end{align*}
and thus
\begin{equation}\label{SG9}
\begin{aligned}
&\ell^2 \sum_{(x,y) \in \B_\ell} \bigl(\langle f( X, \eta) | X=x \rangle - \langle f(X, \eta) \,|\, X=y \rangle \bigr)^2\\
&\lesssim \ell^2|B_\ell| \ \biggl( \sum_{z_1,z_2 \in \Z^d}(|z_1|+|z_2|)(|z_1|^d+|z_2|^d)\rho^{\frac{c|z_1|^d}{2}+\frac{c|z_2|^d}{2}}  \biggr) \sum_{e \in \B_\ell} \la a_{e} \bigl(D^{e}f\bigr)^2 \ra\\
& \lesssim\ell^2|B_\ell|\sum_{e \in \B_\ell} \la a_{e} \bigl(D^{e}f\bigr)^2 \ra.
\end{aligned}
\end{equation}
Inserting this last inequality and \eqref{SG2} into \eqref{SG1} concludes the proof of the spectral gap inequality. From \eqref{e.rho1} and \eqref{SG9}, it is clear that we can choose the constant $C_S(d,\rho)$ increasing with $\rho\in (0,1)$.
\end{proof}
%
%
%
%
%
%
\section{Proofs of the main results}
\label{s.hk}
The main goal of this section is to prove Theorem~\ref{HKbound}. From now on, we fix a local function $f$. Without loss of generality, we may assume that $f \ge 0$ (and therefore $u \ge 0$). In the spirit of the argument sketched in Subsection~\ref{ss.standard}, we first reduce this proof to the following bound.
\begin{proposition}\label{prop.goal}
Under the assumptions of Theorem \ref{HKbound}, for every $p \ge 1$, there exist a constant $C(d,\rho,f,p) < \infty$ and, for every $\delta > 0$, a constant $C'(\delta) < \infty$ such that for every $t \ge 0$,
\begin{align}\label{goal.p}
|| P_t f ||_{2p}^{2p}= || u_t ||_{2p}^{2p} \leq C \biggl( \delta \, t \sum_{x\in\Z^d}\sum_{e\in \B} \la a_e \bigl( (u_t^e)^p - u_t^{p} \bigr)^2 \, \big| \, x \ra + C'(\delta) t^{(1-2p) \frac d 2 }\biggr).
\end{align}
\end{proposition}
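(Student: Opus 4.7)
The plan is to adapt the strategy of Subsection~\ref{ss.standard} to the particle system, using Theorem~\ref{t.spectral-gap} in place of the classical Poincar\'e inequality. First I would set $\ell := \lfloor c\sqrt{\delta t}\rfloor$ for a small constant $c = c(d, \rho, C_S)$, partition $\Zd$ into disjoint translates of $B_\ell$, and for each $x \in \Zd$ write $B(x)$ for the box of this partition containing $x$. Put $g := u_t^p$, so the left side of \eqref{goal.p} equals $\sum_x \la g^2 \, | \, x\ra$. For each box $B$, let $\bar g(x, \eta)$ be the conditional average of $g(Y, \eta)$ obtained by uniformly averaging $Y$ over $\{y \in B : \eta(y) = 1\}$ and the restriction $\eta|_{B}$ over configurations with fixed particle count $N_B := \sum_{y \in B}\eta(y)$, while freezing $\eta|_{B^c}$. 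This is the finest coarsening for which the inner randomness is governed by the measure $\la\,\cdot\,\ra_{\ell, N_B/|B|}$ of Theorem~\ref{t.spectral-gap}, and $\bar g(x,\eta)$ depends on $(x,\eta)$ only through $(B(x), \eta|_{B(x)^c}, N_{B(x)})$. By orthogonality of conditional expectations,
\begin{equation*}
\sum_x \la g^2 \, | \, x\ra = \sum_x \la (g - \bar g)^2 \, | \, x\ra + \sum_x \la \bar g^2 \, | \, x\ra.
\end{equation*}

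For the oscillation term I would freeze $\eta|_{B^c}$ and $N_B$ inside each box and apply Theorem~\ref{t.spectral-gap} to $g$ viewed as a function on $\Omega_\ell$ with the frozen data as parameters. Integrating back the frozen variables, summing over boxes, and matching the two measures via the elementary identity $\sum_{x \in B}\la h \, | \, x, \eta|_{B^c}, N_B\ra = |B|\,\la h\ra_{\ell, N_B/|B|}$, I would obtain
\begin{equation*}
\sum_x \la (g - \bar g)^2 \, | \, x\ra \le C_S\,\ell^2 \sum_x \sum_{e\in\B}\la a_e (g^e - g)^2 \, | \, x\ra.
\end{equation*}
Since $g^e - g = (u_t^e)^p - u_t^p$ and $C_S \ell^2 \le \delta t$ by the choice of $c$, this is exactly the first term on the right side of \eqref{goal.p}.

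For the averaged term $\sum_x \la \bar g^2 \, | \, x\ra$, the tagged particle has been integrated out within its box, so only reversibility and $L^1$-contraction in the environment variable remain. Writing $\sum_x \la \bar g^2 \, | \, x\ra = \sum_x \la g\,\bar g \, | \, x\ra$ (as $\bar g$ is the conditional expectation of $g$), dominating $g = u_t^p \le \|f\|_\infty^{p-1} u_t = \|f\|_\infty^{p-1} P_t f$, and using the reversibility of $P_t$, one reaches
\begin{equation*}
\sum_x \la \bar g^2 \, | \, x\ra \le \|f\|_\infty^{p-1}\sum_x \la f\cdot P_t \bar g \, | \, x\ra.
\end{equation*}
Locality of $f$ concentrates the right side on $O(1)$ sites, so the target decay $t^{(1-2p)\frac d 2}$ must come from a pointwise-type bound on $P_t \bar g$ near the support of $f$. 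For this I would use a local-central-limit estimate: the density of $\nu_\rho$ conditioned on $\{N_B = k\}$ relative to $\nu_\rho$ on $B$ is bounded by $C\ell^{d/2}$ for $k$ near $\rho|B|$, converting the singular conditioning in $\bar g$ into an $\ell^{d/2}$ factor. Paired with the fact that the mass of $u_t^p$ is spread diffusively over a region of volume $\sim \ell^d$ on which it has typical size $\sim \ell^{-pd}$, this yields $\|\bar g\|_\infty \lesssim \ell^{d/2 - pd}$ and thence the desired exponent.

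The principal obstacle is exactly this averaged-term analysis: the conditional density implicit in $\bar g$ is singular of order $\ell^{d/2}$, and this polynomial blow-up must be balanced against $L^1$-mass conservation for $u_t$ (from reversibility and finiteness of $\|f\|_1$), the uniform bound $\|u_t\|_\infty \le \|f\|_\infty$, and the exponential off-diagonal tails supplied by Theorem~\ref{t.carne}, which together confine $u_t^p$ to a region of volume $\sim\ell^d$. The bookkeeping is delicate, but every ingredient has already been prepared; the exponent $(1-2p)\frac d 2$ emerges as the clean algebraic output of this balancing.
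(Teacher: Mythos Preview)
Your high-level plan matches the paper's, but two steps have genuine gaps.

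First, the constant $C_S(d,\rho)$ in Theorem~\ref{t.spectral-gap} increases in $\rho$ and diverges as $\rho\to 1$. When you freeze $N_B$ and apply the inequality box by box, the effective density is the random quantity $N_B/|B_\ell|$, so you cannot pull out a single constant as you do. The paper first restricts to the event that every sub-box has empirical density in $[\tfrac{\rho}{2},\tfrac{\rho+1}{2}]$ (the indicator $\1^\rho(\M^\ell_L)$); controlling the complementary event by a large-deviations bound requires a preliminary localization to $B_L$ with $L\sim\sqrt{t}\log^2 t$ (Proposition~\ref{p.localization}, itself a substantial argument using Theorem~\ref{t.carne}), so that only polynomially many boxes are in play. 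This omission is fixable, but it is not free.

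Second, and more seriously, your averaged-term analysis does not yield the exponent $(1-2p)\tfrac{d}{2}$. You set $\bar g$ equal to the conditional average of $u_t^p$ and reduce to bounding $\langle P_t\bar g\,|\,x\rangle$ near the support of $f$; your justification for the required decay then invokes ``the fact that the mass of $u_t^p$ is spread diffusively \ldots\ with typical size $\sim\ell^{-pd}$,'' which is precisely the conclusion being proved. The paper avoids this circularity by a different decomposition: instead of the orthogonal split with $\overline{u_t^p}$, it uses the \emph{linear} average $\pi^\ell_L u_t = |B_\ell|^{-1}\sum_{y\in B_\ell(x)}\langle u_t\,|\,\M^\ell_L,y\rangle$ and estimates $(\pi^\ell_L u_t)^{2p}$ (the passage from $|\cA_L u_t - \pi^\ell_L u_t|^{2p}$ to the Dirichlet form in $u_t^p$ is then handled by the moment inequality of Lemma~\ref{l.moment}). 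The explicit normalization $|B_\ell|^{-1}$ is thereby raised to the power $2p$, and what remains is $\bigl(\sum_y\langle u_t\,\tilde h^M\,|\,y\rangle\bigr)^{2p}$ with $\tilde h^M$ the Radon--Nikodym density of the conditioning on $\M^\ell_L$ under the product measure. That inner sum is shown to be $O(1)$ by reversibility, locality of $f$, and the single fact that $\langle\tilde h^M\rangle = 1$, so that $L^1$-contraction for the Kawasaki semigroup suffices (Proposition~\ref{p.entropy}). The full decay $|B_\ell|^{1-2p}\sim t^{(1-2p)d/2}$ thus comes from the normalization alone, with no input on the size of $u_t$; the local-CLT bound $\|\tilde h^M\|_\infty\lesssim\ell^{d/2}$ you propose is neither used nor sufficient, and your decomposition with $\bar g=\overline{u_t^p}$ loses the very structure that produces the exponent for $p>1$.
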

\begin{proof}[Proof of Theorem~\ref{HKbound} from Proposition~\ref{prop.goal}]
We first observe that 
\begin{equation}\label{p.derivative}
\partial_t \|u_t\|_{2p}^{2p} = - p \sum_{x\in\Z^d}\sum_{e\in \B} \la ( (u_t^e)^{2p-1} - u_{t}^{2p-1}) a_e (u_t^e - u_t) \, | \, x \ra \leq 0.
\end{equation}
We now verify that
\begin{align}\label{derivative.estimate}
 \la a_e \bigl( (u_t^e)^p - u_t^{p} \bigr)^2 \, \big| \, x \ra \leq C(p) \la ( (u_t^e)^{2p-1} - u_{t}^{2p-1}) a_e (u_t^e - u_t) \, | \, x \ra.
\end{align}
Indeed, since $u_0 = f$ is assumed to be non-negative, we have $u_t \geq 0$, and therefore the above estimate follows from the deterministic inequality 
\begin{align*}
( x^p - y^p)^{2} \leq C(p) (x^{2p-1} - y^{2p-1})(x-y), \ \ \ \ \text{ \, for $x,y \ge 0$}.
\end{align*}
In order to verify the latter, it suffices to consider the case of $x = 1$ and $y \in [0,1]$ by symmetry and homogeneity, and then the conclusion follows easily.

By \eqref{p.derivative}, the function $t \mapsto \Ll\|u_t \Rr\|_{2p}^{2p}$ is decreasing and we have, for every $t \ge0$, that
\begin{align*}  
|| u_t ||_{2p}^{2p} &\leq \frac 2 t \int_{\frac t 2}^t || u_s||_{2p}^{2p} \, \d s \\
&\stackrel{\eqref{goal.p}}{\leq} C \biggl(\delta \int_{\frac t 2}^t \sum_{x\in\Z^d}\sum_{e\in \B} \la a_e \bigl( (u_s^e)^p - u_s^{p} \bigr)^2 \, \big| \, x \ra\, \d s + C'(\delta) t^{(1-2p) \frac d 2 }\biggr)\\
&\stackrel{\eqref{derivative.estimate}}{\leq}C \biggl( \delta \, C(p)  \int_{\frac t 2}^t \sum_{x\in\Z^d}\sum_{e\in \B} \la ( (u_s^e)^{2p-1} - u_{s}^{2p-1}) a_e (u_s^e - u_s) \, | \, x \ra \, \d s+  C'(\delta) t^{(1-2p) \frac d 2 }\biggr)\\
&\stackrel{\eqref{p.derivative}}{=} C \biggl( \delta \, \frac{ C(p)}{p} \bigl( || u_{\frac{t}{2}}||_{2p}^{2p} -  || u_{t}||_{2p}^{2p}  \bigr)+ C'(\delta) t^{(1-2p) \frac d 2 }\biggr)\\
&\leq C\biggl( \delta \, \frac{ C(p)}{p} || u_{\frac{t}{2}}||_{2p}^{2p} +  C'(\delta) t^{(1-2p) \frac d 2 } \biggr).
\end{align*}
It suffices now to fix $\delta$ sufficiently small such that $ C \delta \, \frac{ C(p)}{p} < 2^{(1-2p) \frac d 2 }$ to obtain Theorem~\ref{HKbound} by iteration.
\end{proof}

In the next subsection, we prove Theorem~\ref{t.carne} and derive convenient localization results for the process. We then devote the rest of the section to the proof of Proposition~\ref{prop.goal}. 

\subsection{Localization and cutoff estimate}
\label{s.local}

We start by proving Theorem~\ref{t.carne}.

\begin{proof}[Proof of Theorem~\ref{t.carne}]
Our proof is inspired by the elegant argument presented in \cite{peyre} (see also \cite{lyons,lunt}), with some modifications related to the fact that our processes are indexed by continuous time. We fix $x, y \in \Z^d$, and denote by $\xi$ the function $z \mapsto |z-x|$. We may identify $\xi$ with the function on $\Omega$ defined by $\xi(z,\eta) := \xi(z)$. The following process is a martingale:
\begin{equation*}  
M_t := \xi(\X_t) - \xi(\X_0) - \int_0^t L\xi(\X_s,\bfeta_s) \, \d s.
\end{equation*}
We have
\begin{equation*}  
\la \EE_{(X,\eta)} \Ll[M_t \, | \, \X_t = y \Rr] \, \big| \, x \ra = |y-x| - \la \EE_{(X,\eta)} \Ll[\int_0^t L\xi(\X_s,\bfeta_s) \, \d s \, \big| \, \X_t = y \Rr]\, \big| \, x \ra.
\end{equation*}
By reversibility, 
\begin{align*}  
\la \EE_{(X,\eta)} \Ll[ M_t \, | \, \X_t = x \Rr] \, \big| \,  y \ra & = - |y-x| - \la \EE_{(X,\eta)} \Ll[\int_0^t L\xi(\X_s,\bfeta_s) \, \d s \, \big| \, \X_t = x \Rr] \, \big| \,  y \ra \\
& = - |y-x| - \la \EE_{(X,\eta)} \Ll[ \int_0^t L\xi(\X_s,\bfeta_s) \, \d s \, \big| \, \X_t = y \Rr] \, \big| \, x \ra.
\end{align*}
Combining the last two displays, we obtain
\begin{equation}  \label{e.diff.mart}
\la \EE_{(X,\eta)} \Ll[M_t \, | \, \X_t = y \Rr] \, \big| \, x \ra-\la \EE_{(X,\eta)} \Ll[ M_t \, | \, \X_t = x \Rr] \, \big| \,  y \ra = 2|y-x|.
\end{equation}
We now take a probability space with probability measure $\P$ and associated expectation $\E$ such that under $\P$, the processes $(\X_t^x, \bfeta_t^x)_{t\geq0}$ and $(\X_t^y,\bfeta_t^y)_{t\geq0}$ are independent, and are distributed according to $\la \PP_{(X,\eta)} \Ll[ \,\cdot\,\Rr] \, |\, x\ra$ and $\la \PP_{(X,\eta)} \Ll[ \,\cdot\,\Rr] \, |\, y\ra$ respectively.  
We denote the corresponding martingales by $M_t^x$ and $M_t^y$ respectively. The identity \eqref{e.diff.mart} can be rewritten as
\begin{equation*}  
\E \Ll[ M_t^x - M^y_t \, | \, \X_t^x = y, \, \X_t^y = x \Rr] = 2|y-x|.
\end{equation*}
We apply Jensen's inequality to derive, for any $\lambda > 0$,
\[
\begin{aligned}
e^{\lambda|y-x|}=&\exp\left(\frac{\lambda}{2}\E \Ll[ M_t^x - M^y_t \, | \, \X_t^x = y, \, \X_t^y = x \Rr]\right)\\
\leq& \E\Ll[ e^{\frac{\lambda}{2}(M_t^x - M^y_t)} \, \big| \, \X_t^x = y, \, \X_t^y = x \Rr]\leq \frac{\E\Ll[ e^{\frac{\lambda}{2}(M_t^x - M^y_t)} \Rr]}{\P \Ll[ \X_t^x = y, \, \X_t^y = x \Rr]}.
\end{aligned}
\]
With Lemma~\ref{l.exma}, we further obtain
\begin{align}
\label{e.further}
\P \Ll[ \X_t^x = y, \, \X_t^y = x \Rr] & \le e^{-\lambda|y-x|}\E\Ll[ e^{\frac{\lambda}{2}(M_t^x - M^y_t)} \Rr]\\
\notag & \le  \exp\left[-\lambda|y-x|+Ct(e^\lambda-1-\lambda)\right],
\end{align}
for some constant $C(d)>0$. The above estimate holds for any $\lambda>0$, and we now choose $\lambda$ appropriately to minimize the right side of the above inequality. 

If $|y-x|>t$,  we have 
\[
 \exp\left[-\lambda|y-x|+Ct(e^{\lambda}-1-\lambda)\right]\leq \exp\left[-|y-x|(\lambda-C(e^{\lambda}-1-\lambda))\right].
 \]
By choosing $0<\lambda\ll1$ so that $\lambda-C(e^\lambda-1-\lambda)>0$, we find $c_1>0$ such that the right side of the above inequality is bounded by $e^{-c_1|y-x|}$ in this case. 

If $|y-x|\leq t$, by choosing $\lambda=\frac{|y-x|}{Mt}\leq M^{-1}$, we have 
 \[
 \begin{aligned}
 \exp\left[-\lambda|y-x|+Ct(e^\lambda-1-\lambda)\right]\leq& \exp(-\lambda|y-x|+C\lambda^2t e^\lambda)\\
 =&\exp\left[-\frac{|y-x|^2}{Mt}\left(1-\frac{C}{M}e^\lambda\right)\right].
 \end{aligned}
 \]
By choosing $M\gg 1$, we find $c_2>0$ such that the right side of the above inequality is bounded by $e^{-c_2|y-x|^2/t}$.
 
We finally note that, using independence and then reversibility, the left side of~\eqref{e.further} is
\begin{equation*}  
\P[\X_t^x = y] \, \P[\X_t^y = x] =\Ll(  \la \PP_{(X,\eta)}\Ll[\X_t=y\Rr] \, \big| \,x\ra \Rr)^2,
\end{equation*}
and therefore the proof is complete.
\end{proof}

We now aim to show the following localization result, which says that similarly to the standard heat equation, at a fixed time $t$, we may localize the solution to $\partial_t u=\L u$ with local initial data $f$
to the box $B_L$, provided that $L\gg \sqrt{t}$.

From now on, we define $L := \lfloor \sqrt{t}\log^2 t \rfloor \vee 1$, and denote by $\cA_L$ the conditional expectation 
\begin{equation*}  
\cA_L h(x,\eta) := \la h(x,\eta) \, | \, \mcl F(B_L) \ra,
\end{equation*}
with $\mcl F(B_L)$ the $\sigma$-algebra generated by the variables $(\eta(x),x \in B_L)$.
\begin{proposition}[Localization]
\label{p.localization} 
Let $h$ be a local, non-negative function, and $p \ge 1$. There exists a constant $C(d,\rho,h,p) < \infty$ such that the function $h_t=P_th$ satisfies, for every $t > 0$, 
\begin{equation}
\label{e.localization}
\|h_t\|_{2p}-\bigl(\sum_{x\in B_L} \la (\cA_L h_t)^{2p} \, | \, x\ra\bigr)^{\frac{1}{2p}}\le Ce^{-\frac{\log^2 t}{C}},
\end{equation}
where $L =\lfloor \sqrt{t} \log^2 t \rfloor \vee 1$.
\end{proposition}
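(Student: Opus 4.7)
The plan is to bound the non-negative quantity
\begin{equation*}
\Delta := \|h_t\|_{2p}^{2p} - \sum_{x\in B_L}\langle (\cA_L h_t)^{2p}|x\rangle
\end{equation*}
(its positivity follows from Jensen's inequality applied to $\cA_L$) and then transfer the estimate to the $1/(2p)$-th powers appearing in \eqref{e.localization} via the elementary inequality $A - B \leq (A^{2p}-B^{2p})^{1/(2p)}$, valid for all $A \ge B \ge 0$ as a consequence of the subadditivity of the concave map $x\mapsto x^{1/(2p)}$ on $[0,\infty)$. Since
\begin{equation*}
\Delta = \sum_{x\notin B_L}\langle h_t^{2p}|x\rangle + \sum_{x\in B_L}\bigl(\langle h_t^{2p}|x\rangle - \langle (\cA_L h_t)^{2p}|x\rangle\bigr),
\end{equation*}
my task reduces to bounding each piece by a super-polynomially small quantity such as $C e^{-\log^4 t / C}$.

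For the far-field piece I use that $h$ is $B_r$-local and $0\le h_t \le \|h\|_\infty$ to bound $h_t^{2p}\le \|h\|_\infty^{2p-1}h_t$ and $h_t(x,\eta)\le \|h\|_\infty \sum_{y\in B_r}\PP_{(x,\eta)}[\X_t=y]$. By reversibility and translation invariance of the Bernoulli measure and the dynamics, $\langle \PP_{(X,\eta)}[\X_t=y]|x\rangle = \langle \PP_{(X,\eta)}[\X_t=x-y]|0\rangle$, so Theorem~\ref{t.carne} yields
\begin{equation*}
\sum_{x\notin B_L}\langle h_t | x\rangle \les \sum_{y\in B_r}\sum_{|x-y|\ge L-r}\Bigl( e^{-|x-y|^2/(Ct)}\1_{|x-y|\le t} + e^{-|x-y|/C}\1_{|x-y|>t}\Bigr).
\end{equation*}
Since $L = \lfloor \sqrt{t}\log^2 t\rfloor\vee 1$ satisfies $L - r \gg \sqrt{t}$ for large $t$, the sum is dominated by its Gaussian range and bounded by $Ce^{-\log^4 t / C}$, which suffices.

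For the conditional-variance piece, I first use $|a^{2p}-b^{2p}|\le 2p\max(a,b)^{2p-1}|a-b|$ with $a=h_t$, $b=\cA_L h_t$ (both bounded by $\|h\|_\infty$) to reduce to an $L^\infty$ bound on $h_t - \cA_L h_t$. For this I invoke the Harris graphical (stirring) construction: equipping each edge of $\Z^d$ with an independent rate-one Poisson clock and swapping $\eta$-values at every ring defines a random permutation $\sigma_t$ such that the joint dynamics started at $(x,\eta)$ is realised as $(\X_t,\bfeta_t) = (\sigma_t(x), \eta\circ\sigma_t^{-1})$. Writing $\cA_L h_t(x,\eta) = \EE[h_t(x,\tilde\eta)\mid \eta]$, where $\tilde\eta$ agrees with $\eta$ on $B_L$ and is an independent Bernoulli copy outside, and coupling both processes through the same Poisson clocks, I obtain $h(\X_t,\bfeta_t) = h(\X_t,\tilde\bfeta_t)$ on the event $E := \{\sigma_t^{-1}(v)\in B_L \text{ for every } v\in B_r\}$, because on $E$ the restrictions $\bfeta_t|_{B_r}$ and $\tilde\bfeta_t|_{B_r}$ coincide and $h$ is $B_r$-local. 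Each marginal trajectory $s\mapsto \sigma_{t-s}^{-1}(v)$ is a nearest-neighbor continuous-time random walk at rate $2d$ started at $v$; a union bound over $v\in B_r$ together with the standard Gaussian tail estimate gives $\P[E^c]\le Ce^{-L^2/(Ct)} = Ce^{-\log^4 t/C}$, so $\|h_t - \cA_L h_t\|_\infty \le 2\|h\|_\infty \P[E^c]$ is super-polynomially small, and multiplication by $|B_L| = O(t^{d/2}\log^{2d}t)$ preserves this decay. The main obstacle is precisely this coupling step: the tagged particle destroys the self-duality of the plain exclusion process, but the stirring construction rescues us by embedding the tagged trajectory directly into the permutation $\sigma_t$, so that only a single random-walk tail estimate is ultimately needed.
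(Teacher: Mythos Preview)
Your far-field estimate (the sum over $x \notin B_L$) via the Carne--Varopoulos bound is correct and is essentially the final paragraph of the paper's own proof.

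The coupling argument for the conditional-variance piece, however, contains a genuine error. The identity $(\X_t, \bfeta_t) = (\sigma_t(x), \eta \circ \sigma_t^{-1})$ does \emph{not} hold for the tagged-particle process defined by the generator $\L$. While the $\eta$-marginal is correctly realised by stirring (swapping two equal values changes nothing), the tagged particle is forbidden from moving onto an occupied site, whereas the stirring label $\sigma_t(x)$ exchanges places with a neighbouring particle whenever the clock on that edge rings. Concretely, if $\eta(y) = 1$ for some $y \sim X$, the jump $X \to y$ is suppressed in the true dynamics since $a_{\{X,y\}} = 0$, but under stirring the label moves to $y$ at rate~$1$. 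The generator of $(\sigma_t(x), \eta \circ \sigma_t^{-1})$ is $\sum_e (f^e - f)$, not $\sum_e a_e(f^e - f)$; these differ by $\sum_{y \sim X : \eta(y) = 1}\bigl(f(y, \eta) - f(X, \eta)\bigr)$, which does not vanish in general. Since $\X_t$ genuinely depends on the initial configuration $\eta$ and not only on the Poisson clocks, the two coupled processes started from $(x, \eta)$ and $(x, \tilde\eta)$ will in general satisfy $\X_t \neq \X_t'$ once a discrepancy reaches the tagged particle's neighbourhood, and your event $E$ does not preclude this. This ``retro-action'' of the particle on its environment is precisely the obstruction flagged in the paper's introduction, and it is why a stirring coupling does not settle the localization for the joint process.

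The paper bypasses coupling altogether via an analytic route: it introduces the weighted quantity
\[
U_{m,L,\alpha}(s) = \alpha_{L+1}\|h_s\|_2^2 - \sum_{k=m}^L(\alpha_{k+1}-\alpha_k)\|\cA_k h_s\|_2^2
\]
with exponential weights $\alpha_j = e^{j/\gamma}$, computes $\partial_s \|\cA_k h_s\|_2^2$ directly through the generator (Lemma~\ref{l.bdde}), and absorbs the delicate boundary contributions from edges in $\partial\B_k$ using Lemma~\ref{l.boundary}. This yields the differential inequality of Lemma~\ref{l.gronwall}, and Gronwall then gives $\|h_t\|_2^2 - \|\cA_L h_t\|_2^2 \le C e^{-\log^2 t/C}$. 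The extension to $p>1$ is then immediate from $\|h_t\|_\infty \le \|h\|_\infty$, and the restriction to $x \in B_L$ follows from Lemma~\ref{l.Linfinity} exactly as in your far-field step.
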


Applying the above result to $u_t$ yields that, for $L = \lfloor \sqrt t \log^2 t \rfloor \vee 1$,
\begin{equation} \label{e.approx.finite2}
\begin{aligned}
\|u_t\|_{2p}\leq \bigl(\sum_{x\in B_L} \la (\cA_L u_t)^{2p} \, | \, x\ra\bigr)^{\frac{1}{2p}}+Ce^{-\frac{\log^2 t}{C}}.
\end{aligned}
\end{equation}
Therefore, in order to prove Proposition~\ref{prop.goal}, we only need to analyze the first term on the right side of \eqref{e.approx.finite2}. 

\bigskip

The rest of Subsection~\ref{s.local} is devoted to proving Proposition~\ref{p.localization}. The latter is a localization statement in two different senses: first because it replaces $h_t$ by $\cA_L h_t$; and second because it replaces a full-space sum (implicit in the norm) by one indexed by $B_L$. The second aspect of localization is obtained through the Carne-Varopoulos estimate, which indicates that the tagged particle is not super-diffusive. This information is also useful to justify the introduction of the conditioning operator $\cA_L$. The need of this conditioning in our argument is inspired by the strategy laid out for the proof of \cite[Proposition 3.1]{jlqy}. We use the Carne-Varopoulos estimate to control some boundary terms for which the tagged particle is beyond the diffusive regime. 

We start by observing that the heat kernel estimate obtained in Theorem~\ref{t.carne} implies the following bound on solutions to \eqref{e.parab}.

\begin{lemma}
\label{l.Linfinity}
Let $h$ be a local, non-negative function, and $h_t = P_t h$ be the solution to \eqref{e.parab} with initial condition $h$. There exists a constant $C(d,h)< \infty$ such that for every $t>0$ and $x\in\Z^d$, we have
\[
\la h_t \,|\, x\ra \leq  C\left(e^{-\frac{|x|^2}{Ct}}+e^{-\frac{|x|}{C}}\right).
\]
\end{lemma}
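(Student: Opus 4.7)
The plan is to combine the Carne--Varopoulos bound of Theorem~\ref{t.carne} with the locality of $h$, which forces the tagged particle to be near the origin at time $t$ in order to make a nonzero contribution. First, I would exploit that $h$ is $B_r$-local for some $r<\infty$: this implies $h(z,\zeta)=0$ for $z\notin B_r$, and that $h$ is automatically bounded, the state space $\{0,1\}^{B_r}$ being finite. By the probabilistic representation~\eqref{e.probab.rep},
\[
h_t(x,\eta) \;=\; \EE_{(x,\eta)}\Ll[h(\X_t,\bfeta_t)\Rr] \;\le\; \|h\|_\infty\,\PP_{(x,\eta)}\Ll[\X_t\in B_r\Rr],
\]
and taking $\la\,\cdot\,|\,x\ra$ and decomposing over the endpoint yields
\[
\la h_t\,|\,x\ra \;\le\; \|h\|_\infty \sum_{y\in B_r} \la \PP_{(X,\eta)}\Ll[\X_t=y\Rr]\,\big|\,X=x\ra.
\]

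Next, I would invoke translation invariance of the Bernoulli product measure and of the dynamics (exactly as in the deduction of Theorem~\ref{t.main} from Theorem~\ref{HKbound}) to rewrite
\[
\la \PP_{(X,\eta)}\Ll[\X_t=y\Rr]\,\big|\,X=x\ra \;=\; \la \PP_{(X,\eta)}\Ll[\X_t=y-x\Rr]\,\big|\,X=0\ra.
\]
Theorem~\ref{t.carne} applied to the right side then gives the pointwise estimate
\[
\la \PP_{(X,\eta)}\Ll[\X_t=y\Rr]\,\big|\,X=x\ra \;\le\; \exp\!\Ll(-\tfrac{|y-x|^2}{Ct}\Rr) + \exp\!\Ll(-\tfrac{|y-x|}{C}\Rr),
\]
since, regardless of whether $|y-x|\le t$ or $|y-x|>t$, the sum of the two exponentials dominates the single term Theorem~\ref{t.carne} provides in each case.

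Finally, I would split according to the size of $|x|$. For $|x|>2r$, every $y\in B_r$ satisfies $|y-x|\ge |x|-r\ge |x|/2$, so the $|B_r|=O_d(1)$ terms in the finite sum are each bounded by $\exp(-|x|^2/(4Ct))+\exp(-|x|/(2C))$, which gives the desired inequality after absorbing the constants. For $|x|\le 2r$, the trivial bound $\la h_t\,|\,x\ra\le \|h\|_\infty$ suffices, since $\exp(-|x|/C')\ge e^{-2r/C'}>0$, and enlarging the multiplicative constant finishes the case. I do not anticipate any real obstacle: the lemma is essentially a direct corollary of Theorem~\ref{t.carne} together with the observation that a local initial datum requires the tagged particle to reach a bounded set by time $t$.
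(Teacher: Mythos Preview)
Your proposal is correct and follows essentially the same route as the paper: both use the probabilistic representation together with the locality of $h$ to reduce to $\|h\|_\infty$ times a sum of transition probabilities into $B_r$, apply the Carne--Varopoulos bound of Theorem~\ref{t.carne} (via translation invariance), and then split into the cases $|x|\le 2r$ and $|x|>2r$. The only cosmetic difference is that the paper packages the two exponential regimes into a single auxiliary function $f(t,r)$ before summing, whereas you bound by the sum of the two exponentials directly.
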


\begin{proof}
Recall the probabilistic representation \eqref{e.probab.rep}, which reads
\[
h_t(x,\eta)= \EE_{(x,\eta)}[h(\X_t,\bfeta_t)].
\]
We use the locality of $h$ to derive
\begin{align*}
\la h_t \,|\, x\ra & = \la \EE_{(X,\eta)} \Ll[ h(\X_t,\bfeta_t) \Rr] \, \big|\, x\ra
 \leq \|h\|_\infty \la \PP_{(X,\eta)} \Ll[|\X_t|\leq r_0\Rr] \, \big| \, x\ra 
\end{align*}
where $r_0$ denotes the size of the support of $h$. 
Let the function $f(t,r):=e^{-\frac{r^2}{Ct}}\1_{\{0\leq r\leq t\}}+e^{-\frac{r}{C}}\1_{\{r>t\}}$, where $C(d) < \infty$ is the constant from Theorem~\ref{t.carne}. We have that
 \[
\la h_t \, | \, x\ra\leq \|h\|_\infty \sum_{z: |x+z|\leq r_0} f(t,|z|).
 \]
 
 If $|x|\leq 2r_0$, we use the trivial bound $\la h_t \, |\, x\ra\leq \|h\|_\infty$. 
 
 If $|x|>2r_0$, $|z|$ is comparable to $|x|$, and we have 
\[
\la h_t \, |\, x\ra\leq \|h\|_\infty  \sum_{z: |x+z|\leq r_0} f(t,|z|)\leq C\|h\|_\infty  \left(e^{-\frac{|x|^2}{Ct}}+e^{-\frac{|x|}{C}}\right),
\]
with a possibly larger constant $C$. The proof is complete.
\end{proof}

In the proof of Proposition~\ref{p.localization}, we will focus on the case $p=1$, and then note that the general case $p>1$ follows directly from an $L^\infty$ bound of $h_t$. 
For positive integers $m<L$ and a sequence of increasing positive constants $\alpha_j=\exp\Ll(\frac{j}{\gamma}\Rr)$ with $\gamma=\tau\sqrt{t}$ and $\tau>0$ to be determined, we define 
\begin{equation*}
\begin{aligned}
U_{m,L,\alpha}(s)&:=\alpha_m\|\cA_mh_s\|_2^2+\sum_{k=m+1}^L \alpha_k (\|\cA_k h_s\|_2^2-\|\cA_{k-1}h_s\|_2^2)+\alpha_{L+1}(\|h_s\|_2^2-\|\cA_Lh_s\|_2^2)\\
&=\alpha_{L+1}\|h_s\|_2^2-\sum_{k=m}^L (\alpha_{k+1}-\alpha_k)\|\cA_k h_s\|_2^2.
\end{aligned}
\end{equation*}
We will first estimate $\frac{d}{ds}\|\cA_k h_s\|_2^2$ for $k\in \Z_{\geq 1}$, then derive a differential inequality for $U_{m,L,\alpha}(s)$ with $s\in [0,t]$. By Gronwall's inequality, it will lead to a bound on $U_{m,L,\alpha}(t)$ and $\|h_t\|_2^2-\|\cA_Lh_t\|_2^2$.

We define the Dirichlet energy of $h$ associated with $x\in \Z^d,e\in \B$ as
\[
D_e(h \, |\, x) :=\la a_e(h^e-h)^2 \, |\, x\ra.
\]
For every $e\in \B$ and $k\in \Z_{\geq 1}$, recall that we write $e\in \B_k$ if both end-points of $e$ belong to $B_k$. We write $e\in \partial \B_k$ if only one of these end-points belongs to $B_k$.

\begin{lemma}
\label{l.bdde}
There exists a constant $C(d,\rho,h) < \infty$ such that for any $k\in \Z_{\geq 1}$, $\beta>1$ and $s>0$, we have 
\[
\begin{aligned}
-\frac{d}{ds}\|\cA_k h_s\|_2^2\leq &\sum_{x\in\Z^d} \sum_{e\in \B_k} D_e(h_s \, |\, x)+C\beta\sum_{x\in\Z^d}\sum_{e\in \partial \B_k}D_e(h_s \, |\, x)\\
&+\frac{C}{\beta}\sum_{x\in\Z^d}\left[ \la (\cA_{k+1}h_s)^2\,|\, x\ra-\la (\cA_{k}h_s)^2\,|\, x\ra\right]+C(s^{\frac{d}{2}} e^{-\frac{k^2}{Cs}}+e^{-\frac{k}{C}}).
\end{aligned}
\]
\end{lemma}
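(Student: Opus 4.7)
The starting point is to differentiate and apply a Dirichlet-form identity. Since $\cA_k$ commutes with $\partial_s$, we have $\partial_s \cA_k h_s = \cA_k \L h_s$. Combined with the tower property (using that $\phi := \cA_k h_s$ is $\mcl F(B_k)$-measurable) and the invariance of $\sum_x \la\,\cdot\,|\, x\ra_\rho$ under each transposition $(x,\eta)\mapsto(x^e,\eta^e)$, this yields
\[
-\frac{d}{ds}\|\phi\|_2^2 = \sum_{e \in \B}\sum_{x\in\Z^d} \la a_e(\phi^e - \phi)(h_s^e - h_s)\,|\,x\ra.
\]
The plan is to split the sum over $e$ into three regimes—interior edges $e \in \B_k$, boundary edges $e \in \partial\B_k$, and outer edges (both endpoints outside $B_k$)—and estimate each contribution separately.

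For $e \in \B_k$, the flip commutes with $\cA_k$, so $(\cA_k h_s)^e = \cA_k(h_s^e)$. The tower property together with Jensen's inequality then bounds this contribution by $\sum_{e\in\B_k,x}D_e(h_s\,|\,x)$, accounting for the first term in the lemma. For outer edges, $\phi^e - \phi$ vanishes unless $x$ is an endpoint of $e$, which forces $|x| > k$; Cauchy-Schwarz and the Carne--Varopoulos-type $L^\infty$ estimate from Lemma~\ref{l.Linfinity}, namely $\la h_s^2 \,|\, x\ra \le \|h_s\|_\infty \la h_s \,|\, x\ra \ls e^{-|x|^2/(Cs)} + e^{-|x|/C}$, then reduce the contribution to a dyadic sum over shells of radius $>k$, producing the error term $C(s^{d/2}e^{-k^2/(Cs)} + e^{-k/C})$.

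The delicate case is $e \in \partial\B_k \subset \B_{k+1}$. Setting $\Phi := \cA_{k+1}h_s$, the pivotal observation is that $a_e(\phi^e - \phi)$ is $\mcl F(B_{k+1})$-measurable, while both $h_s - \Phi$ and $(h_s - \Phi)^e$ have zero conditional expectation given $\mcl F(B_{k+1})$ (using $\cA_{k+1}(h_s^e) = \Phi^e$ for $e \in \B_{k+1}$). By the tower property, the pairing simplifies to $\sum_x\la a_e(\phi^e-\phi)(\Phi^e-\Phi)\,|\,x\ra$. Expanding $\phi^e - \phi = (\Phi^e - \Phi) - ((\Phi-\phi)^e - (\Phi-\phi))$ and using the Dirichlet identity on the cross term yields
\[
\sum_x\la a_e(\phi^e-\phi)(\Phi^e-\Phi)\,|\,x\ra = \sum_x\la a_e(\Phi^e-\Phi)^2\,|\,x\ra + 2\sum_x\la a_e(\Phi-\phi)(\Phi^e-\Phi)\,|\,x\ra.
\]
Jensen on $\Phi^e - \Phi = \cA_{k+1}(h_s^e - h_s)$ controls the first term by $D_e(h_s\,|\,\cdot)$. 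A weighted Cauchy-Schwarz with parameter $\beta$ on the cross term splits it into a $\beta$-weighted Dirichlet-like piece and a $\beta^{-1}$-weighted residual in $\la a_e(\Phi-\phi)^2\,|\,x\ra$; summing over $\partial\B_k$ and invoking the orthogonality identity $\|\Phi\|_2^2 - \|\phi\|_2^2 = \|\Phi-\phi\|_2^2$ (from $\phi = \cA_k\Phi$) then produces the $C\beta\sum_{\partial\B_k,x} D_e(h_s\,|\,x)$ and $\frac{C}{\beta}\|\Phi - \phi\|_2^2$ contributions of the lemma.

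The hardest step will be controlling the boundary residual $\sum_{e\in\partial\B_k,x}\la a_e(\Phi-\phi)^2\,|\,x\ra$ by a constant multiple of $\|\Phi-\phi\|_2^2$ independent of $k$. The crude bound $a_e \le 1$ would introduce an unwanted factor of $|\partial\B_k|$; instead one must exploit the Bernoulli product structure—conditionally on $\eta$ away from $\{z_1,z_2\}$, the indicator $a_e = \1_{\eta(z_1)\neq\eta(z_2)}$ carries probability $2\rho(1-\rho)$—and sum carefully over inner-boundary endpoints $z_1\in\partial_- B_k$ so that the combinatorial factor cancels against the localization of $\Phi - \phi$ to the shell $B_{k+1}\setminus B_k$.
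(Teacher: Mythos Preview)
Your overall architecture---differentiate, split over interior/boundary/exterior edges, handle the interior by Jensen and the exterior by Lemma~\ref{l.Linfinity}---matches the paper exactly. The divergence from the paper, and the gap, is in the boundary term. Your reduction from $h_s$ to $\Phi = \cA_{k+1}h_s$ via the tower property is correct, and so is the algebraic identity for $\sum_x\la a_e(\phi^e-\phi)(\Phi^e-\Phi)\,|\,x\ra$. But after Cauchy--Schwarz you are left with the residual $\sum_{e\in\partial\B_k}\sum_x\la a_e(\Phi-\phi)^2\,|\,x\ra$, and your claim that this is $\lesssim \|\Phi-\phi\|_2^2$ is false. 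Take any $h$ with $\Phi - \phi$ depending on a single site $z_0 \in B_{k+1}\setminus B_k$ (e.g.\ $h = g(\eta(z_0))$, so $\Phi = g(\eta(z_0))$ and $\phi = \la g\ra$). For every boundary edge $e$ not touching $z_0$, the indicator $a_e$ is independent of $(\Phi-\phi)^2$, so $\la a_e(\Phi-\phi)^2\,|\,x\ra = 2\rho(1-\rho)\la(\Phi-\phi)^2\,|\,x\ra$; summing over $e\in\partial\B_k$ produces a factor $\sim k^{d-1}$ that nothing cancels. The ``localization of $\Phi-\phi$ to the shell'' only means orthogonality to $\mcl F(B_k)$, not that $\Phi-\phi$ spreads its variance across all shell sites.

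The paper avoids this trap entirely: it does not pass through any Cauchy--Schwarz of this form. Instead it proves Lemma~\ref{l.boundary} by first computing both sides explicitly when $h$ depends only on the two endpoints $\eta(y),\eta(z)$ of a fixed boundary edge, and then extends to general $h$ by conditioning on $\mcl F(B_k\cup\{z\})$ and using a martingale-increment inequality (display~\eqref{e.a1b3}) to sum over $z$ in the shell without picking up the $|\partial\B_k|$ factor. That last step---showing $\sum_{e\in\partial\B_k}(\la(\cA_{k+1}h_z)^2\,|\,x\ra - \la(\cA_k h_z)^2\,|\,x\ra) \le \la(\cA_{k+1}h)^2\,|\,x\ra - \la(\cA_k h)^2\,|\,x\ra$---is the genuine content; it is exactly what your weighted Cauchy--Schwarz cannot see. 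You should either reproduce that martingale argument or find a different way to couple the edge $e$ to the piece of $\Phi-\phi$ it actually controls before applying Cauchy--Schwarz.
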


\begin{proof}
Since $\partial_s \Ll(\cA_k h_s\Rr) = \cA_k \Ll(\partial_s h_s\Rr) = \cA_k \Ll( \L h_s \Rr)$, we have
\[
\begin{aligned}
\frac{d}{ds}\sum_{x\in \Z^d}\la (\cA_k h_s)^2 \, | \, x\ra=
2\sum_{\substack{x\in\Z^d}}\sum_{e\in\B} \la \cA_k h_s, a_e(h_s^e-h_s) \, | \, x\ra.
\end{aligned}
\]
For any $e\in \B$, using the transformation $(x,\eta) \mapsto (x^e, \eta^e)$, we get
\[
\begin{aligned}
\sum_{x\in\Z^d} \la \cA_k h_s,  a_e(h_s^e-h_s)\, |\, x\ra=&\sum_{x\in\Z^d}\la (\cA_k h_s)^e,  a_e(h_s-h_s^e)\, |\, x^e\ra\\
=&\sum_{x\in \Z^d} \la (\cA_k h_s)^e,  a_e(h_s-h_s^e)\, |\, x\ra,
\end{aligned}
\]
and therefore,
\begin{equation}
\frac{d}{ds}\sum_{x\in \Z^d}\la (\cA_k h_s)^2 \, | \, x\ra=-\sum_{x\in\Z^d}\sum_{e\in\B} \la (\cA_k h_s)^e-\cA_k h_s, a_e(h_s^e-h_s)\, |\, x\ra.
\label{eq:exde}
\end{equation}
The summand on the right side of the above equation takes a similar form as the Dirichlet energy $D_e(h_s\, | \, x)$. In order to make this more precise, we distinguish between different cases of $x\in\Z^d,e\in\B$. 

(i) If $e\in \B_k$, then $a_e$ is $\mcl F(B_k)-$measurable. We also have $(\cA_kh_s)^e=\cA_k h_s^e$, so
\[
\begin{aligned}
 \la (\cA_k h_s)^e-\cA_k h_s, a_e(h_s^e-h_s)\, |\, x\ra=&\la a_e (\cA_k h_s^e-\cA_k h_s)^2\, |\, x\ra\\
 \leq & \la a_e ( h_s^e-h_s)^2\, |\, x\ra.
\end{aligned}
 \]
 
(ii) If $e\notin \B_{k+1}$ and $x\notin e$, then we have $(\cA_k h_s)^e=\cA_k h_s$, so the summands in \eqref{eq:exde} are zero.

(iii) If $e\notin B_k$ and $x\in e$, then we have $|x|\geq k$. By Lemma~\ref{l.Linfinity}, we have 
\[
\begin{aligned}
\sum_{x\in \Z^d}\sum_{e\in\B} \1_{\{e\notin \B_k, x\in e\}}|\la \cA_k h_s, a_e(h_s^e-h_s) \, | \, x\ra|\leq& C\sum_{|x|\geq k}  \left(e^{-\frac{|x|^2}{Cs}}+e^{-\frac{|x|}{C}}\right)\\
\leq & C\left(s^{\frac{d}{2}} e^{-\frac{k^2}{2Cs}}+e^{-\frac{k}{2C}}\right).
\end{aligned}
\]

(iv) If $e\in \partial \B_k$ and $x\notin e$, then by Lemma~\ref{l.boundary} we have 
\[
\begin{aligned}
&\sum_{x\in\Z^d}\sum_{e\in \B}\1_{\{e\in \partial \B_k, x\notin e\}} 
\la (\cA_k h_s)^e-\cA_k h_s, a_e(h_s^e-h_s)\, |\, x\ra\\
\leq & C\beta \sum_{x\in \Z^d}\sum_{e\in \partial \B_k} \la a_e(h_s^e-h_s)^2 \, |\, x\ra+\frac{C}{\beta} \sum_{x\in\Z^d} \left[ \la (\cA_{k+1}h_s)^2\,|\, x\ra-\la (\cA_{k}h_s)^2\,|\, x\ra\right].
\end{aligned}
\]

The proof is complete.
\end{proof}

We recall that 
\[
U_{m,L,\alpha}(s)=\alpha_{L+1}\|h_s\|_2^2-\sum_{k=m}^L (\alpha_{k+1}-\alpha_k)\|\cA_k h_s\|_2^2
\]
with $\alpha_j=\exp\Ll({\frac{j}{\gamma}}\Rr)$ and $\gamma=\tau\sqrt{t}$.

\begin{lemma}
\label{l.gronwall}
There exists $C(d,\rho,h) < \infty$ such that for any $t>1$, $s\in [0,t]$, $\tau>C$ and positive integers $m<L$, we have
\[
\frac{d}{ds}U_{m,L,\alpha}(s)\leq \frac{C}{t}U_{m,L,\alpha}(s)+C(t^{\frac{d}{2}}e^{-\frac{m^2}{Ct}}+e^{-\frac{m}{C}}).
\]
\end{lemma}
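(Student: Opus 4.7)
The plan is to differentiate $U_{m,L,\alpha}(s)$ term by term and substitute Lemma~\ref{l.bdde} for each $-\tfrac{d}{ds}\|\cA_k h_s\|_2^2$, together with $\tfrac{d}{ds}\|h_s\|_2^2 = -\sum_{x,e} D_e(h_s\,|\,x)$ for the leading piece. Abbreviating $v_k := \|\cA_k h_s\|_2^2$, $E_s(k) := \sum_x\sum_{e\in\B_k} D_e(h_s\,|\,x)$, $\partial E_s(k) := \sum_x\sum_{e\in\partial\B_k} D_e(h_s\,|\,x)$, and $R_s(k) := s^{d/2}e^{-k^2/Cs} + e^{-k/C}$, this produces, for any $\beta > 1$,
\[
\tfrac{d}{ds}U_{m,L,\alpha}(s) \leq -\alpha_{L+1}\sum_{x,e}D_e(h_s\,|\,x) + \sum_{k=m}^L(\alpha_{k+1}-\alpha_k)\bigl[E_s(k) + C\beta\,\partial E_s(k) + \tfrac{C}{\beta}(v_{k+1}-v_k) + CR_s(k)\bigr].
\]
The goal is to show each bracketed summand type can be absorbed either into a non-positive contribution, into $\tfrac{C}{t}U_{m,L,\alpha}(s)$, or into the target remainder $C(t^{d/2}e^{-m^2/Ct}+e^{-m/C})$.

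Next I would reorganize the two Dirichlet-type contributions by summing over edges rather than over $k$. Setting $k_0(e) := \min\{k : e \in \B_k\}$, a short Abel-summation calculation gives
\[
-\alpha_{L+1}\sum_{x,e}D_e(h_s\,|\,x) + \sum_{k=m}^L(\alpha_{k+1}-\alpha_k)E_s(k) = -\sum_{e\in\B} w(e)\sum_x D_e(h_s\,|\,x),
\]
with weights $w(e) := \alpha_{\min(\max(m,k_0(e)),\,L+1)} \geq 0$. Each edge $e$ lies in exactly one $\partial\B_k$, namely $k = k_0(e)-1$, and $\alpha_{k+1}-\alpha_k = \alpha_k(e^{1/\gamma}-1) \leq 2\alpha_k/\gamma$ once $\gamma \geq 1$, so the boundary contribution is at most $\tfrac{2C\beta}{\gamma}\sum_e \alpha_{k_0(e)}\sum_x D_e(h_s\,|\,x)$. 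Imposing $\gamma \geq 4C\beta$ makes bulk plus boundary non-positive.

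For the telescoping term, the first line of the definition of $U_{m,L,\alpha}$ in the paper already decomposes it as a sum of non-negative pieces, whence (using also $\|h_s\|_2^2 \geq v_{L+1}$ for the $k=L$ term)
\[
\sum_{k=m}^L \alpha_{k+1}(v_{k+1}-v_k) \leq U_{m,L,\alpha}(s).
\]
Combined with $\alpha_{k+1}-\alpha_k \leq 2\alpha_{k+1}/\gamma$, the telescoping contribution is thus at most $\tfrac{2C}{\beta\gamma}U_{m,L,\alpha}(s)$. Choosing $\beta = \sqrt t$ and $\gamma = \tau\sqrt t$ turns this into $\tfrac{2C}{\tau t}U_{m,L,\alpha}(s)$, while the earlier constraint $\gamma \geq 4C\beta$ reduces to $\tau \geq 4C$.

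It remains to estimate the remainder $\tfrac{2C}{\gamma}\sum_{k=m}^L e^{k/\gamma} R_s(k)$. Completing the square in the Gaussian part yields $e^{k/\gamma}e^{-k^2/Cs} = e^{Cs/(4\gamma^2)}e^{-(k-Cs/(2\gamma))^2/Cs}$ with a bounded prefactor $e^{Cs/(4\gamma^2)} \leq e^{C/(4\tau^2)}$; summing, one obtains $\leq Ct^{d/2}e^{-m^2/(C't)}$ in the regime $m \geq Cs/\gamma$, while when $m < Cs/\gamma$ the ratio $m^2/t$ is itself bounded and the trivial bound $Ct^{d/2}$ is already a constant multiple of the target. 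The $e^{-k/C}$ piece is summable once $\tau > C$, with geometric decay peaking at $k=m$. The main obstacle throughout is the joint balancing of $\beta$ and $\gamma$: absorbing the boundary terms demands $\gamma \gg \beta$, whereas absorbing the telescoping term into $\tfrac{C}{t}U$ demands $\beta\gamma \gtrsim t$. The scaling $\beta = \sqrt t$, $\gamma = \tau\sqrt t$ with $\tau$ a sufficiently large constant depending on $d,\rho,h$ resolves this tension and delivers the claimed differential inequality.
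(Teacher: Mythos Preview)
Your proposal is correct and follows essentially the same strategy as the paper: differentiate $U_{m,L,\alpha}$, substitute Lemma~\ref{l.bdde}, choose $\beta=\sqrt{t}$ and $\gamma=\tau\sqrt{t}$ with $\tau$ large, then show the Dirichlet contributions combine to something non-positive while the telescoping piece is bounded by $\tfrac{C}{t}U_{m,L,\alpha}(s)$ and the remainder by the target error. The differences are purely presentational: you organize the Dirichlet cancellation via Abel summation with the edge weights $w(e)=\alpha_{\min(\max(m,k_0(e)),L+1)}$, whereas the paper uses $\partial E_s(k)\le E_s(k+1)-E_s(k)$ and telescopes directly to $\alpha_{L+1}E_s(L+1)-\alpha_m E_s(m)\le \alpha_{L+1}\sum_{x,e}D_e$; and for the Gaussian remainder you complete the square and split into cases $m\gtrless Cs/\gamma$, whereas the paper simply factors $e^{-k^2/Ct}=e^{-k^2/(2Ct)}e^{-k^2/(2Ct)}$, extracts $e^{-m^2/(2Ct)}$, and recognizes the residual sum as a bounded Riemann sum. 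One small inaccuracy: not every edge lies in some $\partial\B_k$ (edges with both endpoints at the same $\ell^\infty$-distance lie in none), but this is harmless since such edges do not appear in the boundary sum anyway.
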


\begin{proof}
We have
\[
\frac{d}{ds}U_{m,L,\alpha}(s)=\alpha_{L+1}\frac{d}{ds}\|h_s\|_2^2-\sum_{k=m}^L(\alpha_{k+1}-\alpha_k)\frac{d}{ds}\|\cA_k h_s\|_2^2.
\]
For the first term on the right side of the above equation, we have 
\[
\frac{d}{ds}\|h_s\|_2^2= -\sum_{x\in\Z^d}\sum_{e\in \B} D_{e} (h_s \, |\, x).
\]

We apply Lemma~\ref{l.bdde} to the second term to obtain
\begin{equation}
\begin{aligned}
&\frac{d}{ds}U_{m,L,\alpha}(s)\\
\leq &-\alpha_{L+1} \sum_{x\in\Z^d}\sum_{e\in \B} D_e(h_s\,|\, x)+\sum_{k=m}^L (\alpha_{k+1}-\alpha_k)\sum_{x\in \Z^d} \sum_{e\in \B_k}D_e(h_s\, |\, x)\\
&+C\beta\sum_{k=m}^L (\alpha_{k+1}-\alpha_k) \sum_{x\in\Z^d}\sum_{e\in \partial \B_k} D_e(h_s\,|\, x)+\frac{C}{\beta}\sum_{k=m}^L(\alpha_{k+1}-\alpha_k)(\|\cA_{k+1}h_s\|_2^2-\|\cA_kh_s\|_2^2)\\
&+C\sum_{k=m}^L(\alpha_{k+1}-\alpha_k)(s^{\frac{d}{2}}e^{-\frac{k^2}{Cs}}+e^{-\frac{k}{C}}),
\end{aligned}
\label{eq:diffineq}
\end{equation}
where $C=C(d,\rho,h)>0$ and $\beta>1$. We will show that by choosing $\tau$ appropriately, the total Dirichlet energy on the right side of \eqref{eq:diffineq} can be negative, and the rest is bounded up to some multiplicative constant by $U_{m,L,\alpha}$ plus some remainder term.

(i) \emph{Dirichlet energy}. Since $\beta>1$ in \eqref{eq:diffineq} is arbitrary, we choose $\beta=\sqrt{t}$. We also assume $\tau>C$ for the constant $C$ appearing in \eqref{eq:diffineq}, then 
\[
C\beta(\alpha_{k+1}-\alpha_k)<\gamma(\alpha_{k+1}-\alpha_k)\leq \alpha_{k+1},
\]
 and we have
\[
\begin{aligned}
&C\beta\sum_{k=m}^L (\alpha_{k+1}-\alpha_k) \sum_{x\in\Z^d}\sum_{e\in \partial \B_k} D_e(h_s\,|\, x)\\
\leq &\sum_{k=m}^L \alpha_{k+1}\sum_{x\in \Z^d}\left(\sum_{e\in \B_{k+1}} D_e(h_s\, |\, x)-\sum_{e\in \B_k} D_e(h_s\, |\, x)\right),
\end{aligned}
\]
which implies
\[
\begin{aligned}
&\sum_{k=m}^L (\alpha_{k+1}-\alpha_k)\sum_{x\in \Z^d} \sum_{e\in \B_k}D_e(h_s\, |\, x)+C\beta\sum_{k=m}^L (\alpha_{k+1}-\alpha_k) \sum_{x\in\Z^d}\sum_{e\in \partial \B_k} D_e(h_s\,|\, x)\\
\leq & \alpha_{L+1}\sum_{x\in\Z^d}\sum_{e\in \B_{L+1}} D_e(h_s\, |\, x)-\alpha_m \sum_{x\in\Z^d}\sum_{e\in \B_m}D_e(h_s\,|\, x).
\end{aligned}
\]
Therefore, the total Dirichlet energy on the right side of \eqref{eq:diffineq} (that is, the sum of the three first terms appearing there) is negative.

(ii) \emph{The remainder term}. Using $\gamma(\alpha_{k+1}-\alpha_k)\leq\alpha_{k+1}$, we obtain
\[
\sum_{k=m}^L(\alpha_{k+1}-\alpha_k)(s^\frac{d}{2}e^{-\frac{k^2}{Cs}}+e^{-\frac{k}{C}})\leq \sum_{k=m}^L \frac{1}{\gamma} e^{\frac{k+1}{\gamma}}s^{\frac{d}{2}}e^{-\frac{k^2}{Cs}}+\sum_{k=m}^L \frac{1}{\gamma} e^{\frac{k+1}{\gamma}}e^{-\frac{k}{C}}.
\]
For the first term, since $s<t$ and $\gamma=\tau\sqrt{t}$, we have
\[
\sum_{k=m}^L \frac{1}{\gamma} e^{\frac{k+1}{\gamma}}s^{\frac{d}{2}}e^{-\frac{k^2}{Cs}}\les  t^{\frac{d}{2}}e^{-\frac{m^2}{2Ct}} \sum_{k=m}^L \frac{1}{\sqrt{t}} e^{\frac{k+1}{\tau\sqrt{t}}} e^{-\frac{k^2}{2Ct}}\les t^{\frac{d}{2}}e^{-\frac{m^2}{2Ct}}.
\]
For the second term, we have 
\[
\sum_{k=m}^L \frac{1}{\gamma} e^{\frac{k+1}{\gamma}}e^{-\frac{k}{C}}=\frac{1}{\tau\sqrt{t}}e^{\frac{1}{\tau\sqrt{t}}} \sum_{k=m}^L e^{(\frac{1}{\tau\sqrt{t}}-\frac{1}{C})k},
\]
then we choose $\tau\geq 2C$ so $\frac{1}{\tau\sqrt{t}}-\frac{1}{C}\leq -\frac{1}{2C}$ to derive (note that $t>1$)
\[
\sum_{k=m}^L \frac{1}{\gamma}e^{\frac{k+1}{\gamma}}e^{-\frac{k}{C}}\les \sum_{k=m}^L e^{-\frac{k}{2C}}\les  e^{-\frac{m}{3C}}.
\]
Therefore, 
\[
\sum_{k=m}^L(\alpha_{k+1}-\alpha_k)(s^{\frac{d}{2}}e^{-\frac{k^2}{Cs}}+e^{-\frac{k}{C}})\leq  C(t^{\frac{d}{2}}e^{-\frac{m^2}{Ct}}+e^{-\frac{m}{C}}).
\]

Now using again the fact that $\beta(\alpha_{k+1}-\alpha_k)\leq\alpha_{k+1}$, we obtain
\[
\begin{aligned}
\frac{d}{ds}U_{m,L,\alpha}(s)\leq &\frac{C}{t}\sum_{k=m}^L \alpha_{k+1}(\|\cA_{k+1}h_s\|_2^2-\|\cA_kh_s\|_2^2)+Ct^{\frac{d}{2}}e^{-\frac{m^2}{Ct}}+Ce^{-\frac{m}{C}}\\
\leq &\frac{C}{t}U_{m,L,\alpha}(s)+C(t^{\frac{d}{2}}e^{-\frac{m^2}{Ct}}+e^{-\frac{m}{C}}).
\end{aligned}
\]
The proof is complete.
\end{proof}

We are now ready to conclude the proof of Proposition~\ref{p.localization}. 

\begin{proof}[Proof of Proposition~\ref{p.localization}]

It is clear that we only need to consider those $t\gg 1$. For such fixed $t$, we choose $m=\lfloor \sqrt{t}\log t \rfloor$ and $L=\lfloor \sqrt{t}\log^2t \rfloor$. By Lemma~\ref{l.gronwall}, we apply the Gronwall's inequality to $U_{m,L,\alpha}$ in $[0,t]$, and  derive
\[
\begin{aligned}
U_{m,L,\alpha}(t)\leq &C\left(U_{m,L,\alpha}(0)+t^{1+\frac{d}{2}}e^{-\frac{m^2}{Ct}}+te^{-\frac{m}{C}}\right)\\
\leq & C\left(U_{m,L,\alpha}(0)+t^{1+\frac{d}{2}}e^{-\frac{\log^2t}{C}}+te^{-\frac{\sqrt{t}\log t}{C}}\right).
\end{aligned}
\]
Since $h$ is a local function, it is $B_m$-local for large $t$, and recalling the definition of $U_{m,L,\alpha}$, we have
\[
U_{m,L,\alpha}(0)=\alpha_m \|\cA_mh\|_2^2=\alpha_m\|h\|_2^2.
\]
Therefore,
\[
\alpha_{L+1}(\|h_t\|_2^2-\|\cA_Lh_t\|_2^2)\leq U_{m,L,\alpha}(t)\leq C\left(\alpha_m\|h\|_2^2+t^{1+\frac{d}{2}}e^{-\frac{\log^2t}{C}}+te^{-\frac{\sqrt{t}\log t}{C}}\right),
\]
which leads to
\[
\|h_t\|_2^2-\|\cA_Lh_t\|_2^2\leq \frac{C\alpha_m}{\alpha_{L+1}}\|h\|_2^2+\frac{C}{\alpha_{L+1}}\left(t^{1+\frac{d}{2}}e^{-\frac{\log^2t}{C}}+te^{-\frac{\sqrt{t}\log t}{C}}\right)
\]
Since $\alpha_j=e^{\frac{j}{\gamma}}=e^{\frac{j}{\tau\sqrt{t}}}$, there exists a constant $C>0$ such that
\[
\|h_t\|_2^2-\|\cA_Lh_t\|_2^2\leq C e^{-\frac{\log^2 t}{C}}(1+\|h\|_2^2).
\]
This implies
\[
\begin{aligned}
\|h_t-\cA_L h_t\|_2^2=\sum_{x\in \Z^d} \la (h_t-\cA_L h_t)^2 \, |\, x\ra=&\sum_{x\in\Z^d} \la h_t^2 \, |\, x\ra-\sum_{x\in\Z^d} \la (\cA_L h_t)^2\, |\, x\ra\\
\leq &C e^{-\frac{\log^2 t}{C}}(1+\|h\|_2^2).
\end{aligned}
\]
For $p>1$, we simply use the $L^\infty$ bound $\|h_t\|_\infty \leq \|h\|_\infty$ to obtain
\[
\begin{aligned}
\|h_t-\cA_L h_t\|_{2p}^{2p}=\sum_{x\in\Z^d} \la (h_t-\cA_L h_t)^{2p} \, |\, x\ra\leq &C\sum_{x\in\Z^d} \la (h_t-\cA_L h_t)^2 \, | \, x\ra\\
\leq &C e^{-\frac{\log^2 t}{C}}(1+\|h\|_2^2).
\end{aligned}
\]

Using Lemma~\ref{l.Linfinity}, we can further restrict the tagged particle in $B_L$: 
\[
\|\cA_L h_t\|_{2p}^{2p}=\sum_{x\in B_L} \la (\cA_L h_t)^{2p} \, |\, x\ra+\sum_{x\notin B_L} \la (\cA_L h_t)^{2p} \, |\, x\ra.
\]
For the summation outside $B_L$, we have
\[
\begin{aligned}
\sum_{x\notin B_L} \la (\cA_L h_t)^{2p} \, |\, x\ra\leq C\sum_{x\notin B_L} \la \cA_L h_t \, | \, x\ra \leq &C\sum_{x\notin B_L} \left(e^{-\frac{|x|^2}{Ct}}+e^{-\frac{|x|}{C}}\right)\\
\leq & C e^{-\frac{\log^4 t}{C}}.
\end{aligned}
\]
The proof of Proposition~\ref{p.localization} is therefore complete.
\end{proof}

\subsection{Variance control: spectral gap inequality}

Recalling that in the previous step we fixed $L = \lfloor \sqrt{t} \log^2 t \rfloor \vee 1$,  we now define $\ell := \lfloor \delta \sqrt{t} \rfloor \vee 1$, for some $0<\delta\ll 1$ to be determined, and fix a partition $\{B_{\ell,i}\}_{i\in\Z_{\geq 1}}$ of $B_L$ into boxes of size $\ell$.
For each $x \in \Zd$, we denote by $B_\ell(x)$ the box of this partition to which $x$ belongs (so that $B_\ell(x)$ is \emph{not} the box centered at $x$, which we may rather denote by $x + B_\ell$). Possibly adjusting $\delta$ ever so slightly, we assume that $m := (2L+1)^d/(2\ell + 1)^d$ is an integer, i.e. we  choose $m \lesssim \delta^{-d} \log^{2d}t$ boxes of size $\ell$ partitioning $B_L$, and write $B_L=\bigcup_{i=1}^m B_{\ell,i}$.

Let $\M^{\ell}_{L} \in \Z_{\geq0}^m$ be the random vector made of the number of particles in each of the size-$\ell$ boxes partitioning $B_L$, which we decompose as
\[
\M^{\ell}_L=(M_1,\ldots,M_m),
\] 
with $M_i$ denoting the (random) number of particles in $B_{\ell,i}$.

We first show that all $M_i$ can be restricted to be in $[\frac \rho 2 |B_\ell|, \frac{\rho+1}{2} |B_\ell|] $, i.e. we only consider the cases when the number of particles in each box $B_{\ell,i}$ is relatively close to
its expectation $\rho |B_\ell|$. Define
\begin{align*}
\1^{\rho}( \M^{\ell}_L ) : = \prod_{i=1}^{m}\1_{\{\frac \rho 2 |B_\ell| \leq M_i \leq \frac{\rho+1}{2} |B_\ell|\}}.
\end{align*}
Recall that we fixed a local function $f \ge 0$, and that $u_t$ is the solution to \eqref{e.parab}.

\begin{lemma} Let $p \ge 1$. There exists a constant $C(d,\rho, f, p) < \infty$ and, for each $\delta > 0$, a constant $C'(\delta) < \infty$ such that for every $t > 0$,
\begin{align}\label{m.restricted}
\sum_{x \in B_L} \la (\cA_L u_t )^{2p} \, |\, x \ra \le  C \Ll(\sum_{x \in B_L} \la (\cA_L u_t )^{2p} \1^\rho(\M^{\ell}_L) \,  | \,x \ra + C'(\delta) t^{-100 p d } \Rr).
\end{align}
\end{lemma}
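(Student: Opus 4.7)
The plan is to decompose
$$
(\cA_L u_t)^{2p} = (\cA_L u_t)^{2p}\1^\rho(\M^{\ell}_L) + (\cA_L u_t)^{2p}\bigl(1-\1^\rho(\M^{\ell}_L)\bigr)
$$
and show that the sum over $x \in B_L$ of the conditional expectations of the second piece decays super-polynomially in $t$ for each fixed $\delta > 0$, which is far more than enough to absorb it into $C'(\delta)\, t^{-100pd}$. The key observation is that $\cA_L u_t$ is pointwise bounded: since $f$ is local and non-negative it is uniformly bounded by some constant $\|f\|_\infty$, so the probabilistic representation \eqref{e.probab.rep} gives $0 \le u_t \le \|f\|_\infty$ everywhere on $\Omega$, and taking a conditional expectation preserves this bound. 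Hence
$$
\sum_{x \in B_L} \la (\cA_L u_t)^{2p}\bigl(1-\1^\rho(\M^\ell_L)\bigr) \, | \, x \ra \le \|f\|_\infty^{2p} \sum_{x \in B_L} \la 1-\1^\rho(\M^\ell_L) \, | \, x \ra.
$$

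Next I would run a box-by-box concentration argument. Under $\la \, \cdot \, | \, x \ra$, the variables $(\eta(y))_{y \ne x}$ are i.i.d.\ Bernoulli$(\rho)$, so for each $i$ the count $M_i = \sum_{y \in B_{\ell,i}} \eta(y)$ is a sum of at most $|B_\ell|$ i.i.d.\ Bernoulli$(\rho)$ random variables plus possibly a deterministic $+1$ term (if $x \in B_{\ell,i}$). A standard Chernoff/Hoeffding estimate yields a constant $c_\rho > 0$ such that
$$
\la \1_{\{M_i \notin [\rho |B_\ell|/2,\,(\rho+1)|B_\ell|/2]\}} \, | \, x \ra \le 2 e^{-c_\rho |B_\ell|},
$$
and a union bound over the $m \lesssim \delta^{-d}(\log t)^{2d}$ boxes of the partition produces
$$
\la 1-\1^\rho(\M^\ell_L) \, | \, x \ra \lesssim \delta^{-d}(\log t)^{2d}\, \exp\bigl(-c_\rho (\delta\sqrt{t})^d\bigr).
$$

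Combining this with $|B_L| \lesssim t^{d/2}(\log t)^{2d}$ bounds the total error by a quantity of the form $C(d,\rho,f,p)\, \delta^{-d}\, t^{d/2}(\log t)^{4d}\, \exp(-c_\rho(\delta\sqrt{t})^d)$. For any fixed $\delta > 0$ the stretched exponential dominates every polynomial in $t$ once $t$ exceeds a threshold $T_\delta$, so this expression is bounded by $C'(\delta)\, t^{-100pd}$ uniformly in $t \ge T_\delta$; for the complementary bounded range $t \in (0, T_\delta]$ the left-hand side of \eqref{m.restricted} is a fortiori bounded by $|B_L|\|f\|_\infty^{2p}$, which is finite, so one simply enlarges $C'(\delta)$ to accommodate. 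This yields \eqref{m.restricted} with $C = 1$. The argument presents no genuine obstacle; it is a direct concentration estimate whose only two ingredients are the uniform $L^\infty$ bound on $\cA_L u_t$ and the fact that the box side $\ell \asymp \delta\sqrt{t}$ is large enough for per-box Bernoulli concentration to beat every polynomial in $t$.
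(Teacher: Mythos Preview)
Your proposal is correct and follows essentially the same approach as the paper: the same decomposition, the same $L^\infty$ bound on $\cA_L u_t$, the same union bound over the $m$ boxes, and the same Chernoff-type concentration for each $M_i$ (the paper just writes out the exponential moment $g(\lambda)=e^{-\frac{\rho+1}{2}\lambda}(\rho e^\lambda+1-\rho)$ explicitly). Your treatment of the constant ($C=1$) and of the small-$t$ range is, if anything, slightly more careful than the paper's.
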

\begin{proof}
 We write
\begin{align*}
\sum_{x \in B_L} \la (\cA_L u_t )^{2p} \, | x \ra = \sum_{x \in B_L} \la (\cA_L u_t )^{2p} \1^\rho(\M^{\ell}_L) \,  | \, x \ra + \sum_{x \in B_L} \la (\cA_L u_t )^{2p} (1 - \1^\rho(\M^{\ell}_L)) \,  | \, x \ra,
\end{align*}
and bound the second term on the r.h.s by
\begin{align*}
\sum_{x \in B_L} \la (\cA_L u_t )^{2p} (1 - \1^\rho(\M^{\ell}_L)) \,  | \, x \ra \les\sum_{x \in B_L} \la 1 - \1^\rho(\M^{\ell}_L) \,  | \,x \ra.
\end{align*}
By our definition of $\1^\rho(\M^{\ell}_L)$, it holds for every $x \in B_L$ that
\begin{align*}
 \la 1 - \1^\rho(\M^{\ell}_L) \,  | \, x \ra 
 \leq \sum_{i=1}^{m} \left( \la \1_{\{M_i > \frac{\rho+1}{2}  |B_\ell|\}}  \, \big| \, x \ra+ \la \1_{\{M_i < \frac{\rho}{2}  |B_\ell|\}}  \, \big| \, x\ra \right).
\end{align*}
For each $i=1,\ldots,m$, we have 
\[
\la \1_{\{M_i > \frac{\rho+1}{2}  |B_\ell|\}}  \, \big| \, x \ra \leq \la e^{\lambda(M_i-\frac{\rho+1}{2}|B_\ell|)} \, \big| \, x\ra\leq e^\lambda\left[e^{-\frac{\rho+1}{2}\lambda}(\rho e^\lambda+1-\rho)\right]^{|B_\ell|}
\]
for any $\lambda\geq 0$, where the factor $e^\lambda$ comes from the case when $x\in B_{\ell,i}$. For the function $g(\lambda):=e^{-\frac{\rho+1}{2}\lambda}(\rho e^\lambda+1-\rho)$, it holds that $g(0)=1$ and $g'(0)=(\rho-1)/2<0$, so we may choose $\lambda=\lambda_\rho$ so that $C_\rho:=g(\lambda_\rho)<1$. Thus,  
\[
\la \1_{\{M_i > \frac{\rho+1}{2}  |B_\ell|\}}  \, \big| \, x \ra\les C_\rho^{|B_\ell|}.
\]
Since the same discussion applies to $\la \1_{\{M_i < \frac{\rho}{2}  |B_\ell|\}}  \, | \, x\ra$, we obtain 
\[
 \la 1 - \1^\rho(\M^{\ell}_L) \,  | \, x \ra\les m C_\rho^{|B_\ell|}  \les \delta^{-d}(\log^{2d}t) C_\rho^{(2\lfloor\delta \sqrt{t}\rfloor+1)^{d}},
 \]
which implies
\begin{align*}
\sum_{x \in B_L} \la (\cA_L u_t )^{2p} (1 - \1^\rho(\M^{\ell}_L)) \,  | x \ra \lesssim \frac{ C(\delta)}{t^{100 p d}},
\end{align*}
and proves \eqref{m.restricted}.
\end{proof}

Given a vector $\M^{\ell}_L$, we define for a function $h$
\begin{equation}
\label{e.def.pi}
\pi^{\ell}_{L} h(\M^{\ell}_L,x) :=|B_\ell|^{-1} \sum_{y \in B_\ell(x)} \la h \, | \, \M^{\ell}_L, y\ra.
\end{equation}
This quantity may be viewed as a local average of $h$, that is, as the expectation of $h$ conditioning on $\M^\ell_L$ and the event that the tagged particle is uniformly distributed in $B_{\ell}(x)$. Appealing to \eqref{m.restricted} and to the triangle inequality, we bound
\begin{multline}\label{e.decomposition}
\sum_{x \in B_L} \la ( \cA_L u_t)^{2p} \, | \, x \ra \\
 \lesssim \sum_{x \in B_L} \la | \cA_L u_t - \pi^\ell_L u_t|^{2p} \1^\rho(\M^{\ell}_L) \, | \, x \ra +  \sum_{x \in B_L} \la (\pi^\ell_L u_t)^{2p} \1^\rho(\M^{\ell}_L) \, | \, x \ra + C(\delta) t^{-100 p d}.
\end{multline}
We now apply the spectral gap inequality of Theorem~\ref{t.spectral-gap} to control the first term on the right side of the above display. 
\begin{proposition}\label{p.second.spectral.gap}
Let $p \ge 1$. There exists a constant $C(d,\rho,p) < \infty$ such that for every $\mcl F(B_L)$-measurable, bounded non-negative function $h : \Omega \to \R$ and $t > 0$, we have
\begin{equation}\label{s.s.g.1}
\sum_{x \in B_L} \la | h - \pi^{\ell}_{L}h(\M^{\ell}_{L},x) |^{2p} \1^{\rho}(\M_L^\ell)   \, | \,   x \ra \\
\le C \ell^2 \sum_{x\in B_L}\sum_{e\in\B_L} \la a_e((h^e)^p - h^p)^2 \, | \, x \ra.
\end{equation}
\end{proposition}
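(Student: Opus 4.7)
The plan is to combine a symmetrization argument, which reduces the control of the $(2p)$-th central moment of $h$ to the variance of $g := h^p$, with Theorem~\ref{t.spectral-gap} applied inside each sub-box $B_{\ell,i}$ of the partition of $B_L$, supplemented by the Kawasaki spectral gap of Proposition~\ref{p.spec_kawa} to handle fluctuations of $\eta$ outside $B_{\ell,i}$.

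The symmetrization bound reads as follows: for any probability measure $\pi$, any nonnegative random variable $h$ with independent copy $h'$ (set $g' := (h')^p$), and any $p \ge 1$, the combination of Jensen's inequality for $z \mapsto z^{2p}$ and the elementary bound $|a - b|^p \le |a^p - b^p|$ valid for $a, b \ge 0$ yields
\[
\la (h - \la h \ra_\pi)^{2p} \ra_\pi \le \la (h - h')^{2p} \ra_{\pi \otimes \pi} \le \la (g - g')^2 \ra_{\pi \otimes \pi} = 2\,\var_\pi(g).
\]
This elegantly bypasses any Jensen-gap issue that would arise from a direct bound of $(h - \pi^\ell_L h)^{2p}$ by $(h^p - (\pi^\ell_L h)^p)^2$.

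Next, I would decompose the left-hand side of~\eqref{s.s.g.1} according to the sub-box $B_{\ell,i(x)}$ containing $x$. Setting $\pi_i(\M) := \la h \,|\, \M^\ell_L, X \in B_{\ell,i} \ra$, one has $\pi^\ell_L h(\M,x) = \pi_{i(x)}(\M)$, so that
\[
\sum_{x \in B_L} \la (h - \pi^\ell_L h)^{2p}\, \1^\rho \,|\, x \ra = \sum_{i=1}^m |B_\ell| \, \la (h - \pi_i)^{2p}\, \1^\rho \,|\, X \in B_{\ell,i} \ra.
\]
Conditioning on $\M^\ell_L$ and applying the symmetrization bound to the measure $\la \,\cdot\, |\, \M, X \in B_{\ell,i} \ra$, whose conditional mean of $h$ is exactly $\pi_i(\M)$, produces $\la (h - \pi_i)^{2p} | \M, X \in B_{\ell,i} \ra \le 2\,\var(g | \M, X \in B_{\ell,i})$. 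It remains to bound this variance, on $\{\1^\rho(\M)=1\}$, by the Dirichlet energy of $g$. By the law of total variance with $\xi := \eta|_{B_{\ell,i}^c}$,
\[
\var(g \,|\, \M, X \in B_{\ell,i}) = \E\bigl[\var(g \,|\, \M, X \in B_{\ell,i}, \xi)\bigr] + \var_\xi(\bar g),
\]
where $\bar g(\xi) := \E[g \,|\, \M, X \in B_{\ell,i}, \xi]$. For the first summand, under the innermost conditioning $(X, \eta|_{B_{\ell,i}})$ is distributed as $\la \cdot \ra_{\ell, M_i/|B_\ell|}$ (up to translation), and $\1^\rho=1$ forces $M_i/|B_\ell| \le (\rho+1)/2 < 1$, so Theorem~\ref{t.spectral-gap} combined with monotonicity of $C_S$ in $\rho$ controls it by $C \ell^2 \sum_{e \in \B_{\ell,i}} \E[a_e(g^e - g)^2 \,|\, \M, X \in B_{\ell,i}]$. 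For the second summand, given $\M$ the components $(\eta|_{B_{\ell,j}})_{j \ne i}$ are independent, each uniform on configurations with count $M_j$; combining Efron--Stein, the Kawasaki spectral gap of Proposition~\ref{p.spec_kawa} inside each $B_{\ell,j}$, and Jensen's inequality (together with the identification $\bar g^e(\xi) = \E[g^e \,|\, \M, X \in B_{\ell,i}, \xi]$, which holds because $e \in \B_{\ell,j}$ with $j \ne i$ leaves $X$ and $\eta|_{B_{\ell,i}}$ untouched) yields a bound by $C \ell^2 \sum_{j \ne i} \sum_{e \in \B_{\ell,j}} \E[a_e(g^e - g)^2 \,|\, \M, X \in B_{\ell,i}]$. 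Summing the two contributions, using $(g^e - g)^2 = ((h^e)^p - h^p)^2$, and translating back via $|B_\ell|\,\E[\,\cdot\,|\, X \in B_{\ell,i}] = \sum_{x \in B_{\ell,i}} \la \,\cdot\, |\, x \ra$ delivers the right-hand side of~\eqref{s.s.g.1}.

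The main technical effort should lie in the outside-box variance step: one must verify that conditioning on $X \in B_{\ell,i}$ preserves the product structure of the $(\eta|_{B_{\ell,j}})_{j \ne i}$ under $\la \cdot \,|\, \M \ra$ (which follows from the symmetry of the uniform measure $\la \cdot \ra_{\ell, \rho}$, so the $\eta$-marginal is independent of the restriction on $X$), and that all constants remain uniformly bounded on $\{\1^\rho = 1\}$. The symmetrization step, by contrast, is a clean and general inequality and plays the crucial role of sidestepping the Jensen-gap obstruction.
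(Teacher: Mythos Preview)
Your proposal is correct and follows essentially the same approach as the paper: the symmetrization bound you state is precisely Lemma~\ref{l.moment}, and your law-of-total-variance plus Efron--Stein decomposition is the same in substance as the paper's martingale decomposition through the decreasing filtration $\mathcal{G}_j := \sigma(M_1,\ldots,M_j,\, \eta|_{B_L\setminus \cup_{k\le j} B_{\ell,k}})$, with Theorem~\ref{t.spectral-gap} applied inside $B_{\ell,i}$ and Proposition~\ref{p.spec_kawa} in each other sub-box. The only cosmetic difference is that the paper writes the outside-box variance as an orthogonal telescoping sum $\sum_j (H^p_j - H^p_{j+1})$ rather than invoking Efron--Stein by name.
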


\begin{proof}
We write
\begin{align*}
\sum_{x \in B_L} \la | h - \pi^{\ell}_{L}h(\M^{\ell}_{L},x) |^{2p}  \1^{\rho}(\M_L^\ell)  \,|\,   x \ra = \sum_{i=1}^{m} \sum_{x\in B_{\ell,i}}  \la | h - \pi^{\ell}_{L}h(\M^{\ell}_{L},x) |^{2p}  \1^{\rho}(\M_L^\ell)  \,|\,   x \ra.
\end{align*}
It suffices to show that for every $i\in \{1,\ldots, m \}$,
\begin{align}\label{SG2.1}
\sum_{x \in B_{\ell,i}}  \la | h - \pi^{\ell}_{L}h(\M^{\ell}_{L},x) |^{2p}  \1^{\rho}(\M_L^\ell)  \,|\,   x \ra \lesssim \ell^2 \sum_{x\in B_{\ell,i}} \sum_{e \in \B_L} \la a_e ( (h^e)^p - h^p)^2 \,|\, x \ra.
\end{align}
Since the ordering of the partition $(B_{\ell,i})_{i = 1}^m$ is arbitrary, it suffices to prove \eqref{SG2.1} for $i = 1$.
Recalling that $\M^{\ell}_{L} = (M_1,\ldots,M_m)$, we define a decreasing sequence of $\sigma$-algebras $\{\mathcal{G}_j \}_{j=1}^m$ by
\begin{align}
\mathcal{G}_j:= \sigma\Ll( M_1, \ldots, M_j,   \Ll\{ \eta(\tilde x) \ : \ \tilde x \in B_L\setminus \cup_{k=1}^j B_{\ell,k}  \Rr\}  \Rr)
\end{align}
and  the following random variables for $q\geq 1$:
\begin{align}\label{SG2.7}
(h^q)_j^y:= \langle h^q \, | \, \mathcal{G}_j, y \rangle, \qquad   H^q_j := |B_{\ell}|^{-1}\sum_{ y\in B_{\ell,1}}(h^q)_j^y .
\end{align}
It is clear that  $H^q_j$ may be viewed as the expectation of $h^q$ conditioning on $\mathcal{G}_j$ and the event that the tagged particle is uniformly distributed in $B_{\ell,1}$.

\medskip

With the above notations, we write 
\[
\begin{aligned}
\sum_{x \in B_{\ell,1}}  \la | h - \pi^{\ell}_{L}h(\M^{\ell}_{L},x) |^{2p}  \1^{\rho}(\M_L^\ell)  \,|\,   x \ra
= \sum_{x\in B_{\ell,1}} \la \langle  | h - H^1_m |^{2p} \, | \, \mathcal{G}_m , x \rangle  \1^{\rho}(\M_L^\ell)  \,|\,   x \ra.
\end{aligned}
\]
We observe that for each $x \in B_{\ell,1}$, the random variable 
\begin{equation*}  
\la | h - H^1_m |^{2p} \, | \, \mathcal{G}_m , x \ra  \1^{\rho}(\M_L^\ell) 
\end{equation*}
depends only on $M_1$ and $(\eta(\td x),\td x \notin B_{\ell,1})$, thus we may substitute the outer measure $\langle \cdot \, | \, x \rangle$ with $\langle \cdot \, | \, x_0 \rangle$ for any fixed $x_0 \in B_{\ell, 1}$ and move the summation inside to write
\[
\sum_{x \in B_{\ell,1}}  \la |h - \pi^{\ell}_{L}h(\M^{\ell}_{L},x) |^{2p}  \1^{\rho}(\M^\ell_L)  \,|\,   x \ra =  \la \sum_{x \in B_{\ell,1}} \langle | h - H^1_m |^{2p} \, | \, \mathcal{G}_m , x \rangle  \1^{\rho}(\M^\ell_L)  \,\bigg|\,   x_0 \ra.
\]
We apply the moment inequality of Lemma \ref{l.moment} to derive
\begin{align*}
\sum_{x \in B_{\ell,1}} \la  | h - H^1_m |^{2p} \, | \, \mathcal{G}_m , x \ra \les\sum_{x \in B_{\ell,1}} \la  ( h^p - H^p_m)^{2} \, | \, \mathcal{G}_m , x \ra.
\end{align*}
It thus remains to prove \eqref{SG2.1} with the left side replaced by
\begin{equation}
\label{SG2.9}
 \la \sum_{x \in B_{\ell,1}} \la  ( h^p - H^p_m)^{2} \, | \, \mathcal{G}_m , x \ra  \1^{\rho}(\M_L^\ell)  \,\bigg|\,   x_0 \ra.
\end{equation}
For the summation in \eqref{SG2.9}, we have
\begin{align}\label{SG2.2}
\sum_{x \in B_{\ell,1}} \la  \bigl( h^p - H^p_m \bigr)^{2} \, | \, \mathcal{G}_m , x \ra = \sum_{x \in B_{\ell,1}} \la  \bigl( h^p - H^p_1 + \sum_{i=1}^{m-1} (H^p_i -  H^p_{i+1} )\bigr)^{2} \, \bigg| \, \mathcal{G}_m , x \ra \notag\\
= \sum_{x \in B_{\ell,1}} \la  \bigl( h^p - H^p_1\bigr)^{2} \, | \, \mathcal{G}_m , x \ra + \sum_{i=1}^{m-1}\sum_{x \in B_{\ell,1}} \la   (H^p_i -  H^p_{i+1} )^{2} \, | \, \mathcal{G}_m , x \ra.
\end{align}

\medskip

We start by observing that the first term on the right side of \eqref{SG2.2} can be rewritten as
\begin{align}
\sum_{x \in B_{\ell,1}} \la  \bigl( h^p - H^p_1\bigr)^{2} \, | \, \mathcal{G}_m , x \ra 
&= \la \sum_{x \in B_{\ell,1}} \la \bigl( h^p - H^p_1 \bigr )^{2} \, | \,  \mathcal{G}_1 , x \ra \, \bigg| \,  \mathcal{G}_m , x_0 \ra.
\end{align}
By applying to the term inside $\langle \cdot \, |\, \mathcal{G}_m , x_0 \rangle$ the spectral gap inequality of Theorem~\ref{t.spectral-gap} in the box $B_{\ell,1}$ and with density given by $\rho_1:=M_1/|B_\ell|\in[\frac{\rho}{2},\frac{\rho+1}{2}]$, we obtain 
\begin{equation}
\begin{aligned}
\label{SG2.6}
&\sum_{x \in B_{\ell,1}} \la  \bigl( h^p - H^p_1\bigr)^{2} \ | \ \mathcal{G}_m , x \ra \\
\leq &C_S(\rho_1,d)\ell^2 \la \sum_{x \in B_{\ell,1}}\sum_{e\in \mathbb{B}_{\ell,1} } \la a_e \bigl( (h^e)^p - h^p \bigr)^{2} \, | \, \mathcal{G}_1 , x \ra \, \bigg| \, \mathcal{G}_m , x_0 \ra \\
= &C_S(\rho_1, d)\ell^2\sum_{x \in B_{\ell,1}}\sum_{e \in \mathbb{B}_{\ell,1} } \la a_e \bigl( (h^e)^p - h^p \bigr)^{2}  \, | \, \mathcal{G}_m , x \ra.
\end{aligned}
\end{equation}
To deal with the second term on the right side of \eqref{SG2.2}, the idea is similar. For each $i=1,\ldots,m-1$, we note that $\la   (H^p_i -  H^p_{i+1} )^{2} \, | \, \mathcal{G}_m , x \ra
$ does not depend on $x$, and 
\[
\la   (H^p_i -  H^p_{i+1} )^{2} \, | \, \mathcal{G}_m , x \ra= \la  \la (H^p_i -  H^p_{i+1} )^{2} \, | \, \mathcal{G}_{i+1},y\ra \, | \, \mathcal{G}_m,y  \ra
\]
for any $y\in B_{\ell,1}$. We also have 
\[
\begin{aligned}
(H^p_i -  H^p_{i+1} )^{2}=&|B_\ell|^{-2}\bigl(\sum_{y\in B_{\ell,1}} \bigl(\la h^p \, |\, \G_i,y\ra- \la h^p \, |\, \G_{i+1},y\ra\bigr)\bigr)^2\\
\leq & |B_\ell|^{-1} \sum_{y\in B_{\ell,1}}\bigl(\la h^p \, |\, \G_i,y\ra- \la h^p \, |\, \G_{i+1},y\ra\bigr)^2.
\end{aligned}
\]
After conditioning on $\G_{i+1}$, we apply the spectral gap inequality of Proposition~\ref{p.spec_kawa} to the box $B_{\ell,i+1}$ and derive for every $y\in B_{\ell,1}$ that
\[
\la \bigl(\la h^p \, |\, \G_i,y\ra- \la h^p \, |\, \G_{i+1},y\ra\bigr)^2 \, \big|\, \G_{i+1},y \ra \leq C_K \ell^2 \sum_{e\in \B_{\ell,i+1}} \la a_e \bigl( (h^e)^p - h^p \bigr)^{2} \,\big|\, \G_{i+1},y\ra,
\]
and this implies 
 \begin{multline}\label{SGnew}
 \sum_{i=1}^{m-1}\sum_{x \in B_{\ell,1}} \la   (H^p_i -  H^p_{i+1} )^{2} \, | \, \mathcal{G}_m , x \ra\\
 \les C_K \ell^2 \sum_{i=1}^{m-1}\sum_{y\in B_{\ell,1}}\sum_{e\in \B_{\ell,i+1}} \la a_e \bigl( (h^e)^p - h^p \bigr)^{2} \,\big|\, \G_{m},y\ra.
\end{multline} 
 Combining \eqref{SG2.6} and \eqref{SGnew}, we obtain
\begin{multline*}
\sum_{x \in B_{\ell,1}} \la  \bigl( h^p - H^p_m \bigr)^{2} \, \big| \, \mathcal{G}_m , x \ra  \\
\lesssim   \, \Ll(C_S(\rho_1,d)\vee C_K \Rr)\, \ell^2 \sum_{x \in B_{\ell,1}}\sum_{e\in \B_L} \la a_e \bigl( (h^e)^p - h^p \bigr)^{2} \, | \,  \mathcal{G}_m,  x \ra.
\end{multline*}
We finally plug this inside \eqref{SG2.9} and obtain \eqref{SG2.1} with $i=1$. We need the factor $\1^\rho(\M_L^\ell)$ to bound $C_S(\rho_1, d) \leq C_S(\frac{\rho+1}{2},d)$. 
\end{proof}
By Proposition~\ref{p.second.spectral.gap} and the fact that $\pi^\ell_L \cA_L = \pi^\ell_L$, $\ell=\lfloor \delta \sqrt{t} \rfloor$, we can therefore reduce  \eqref{e.decomposition} to
\begin{multline}\label{e.decomposition2}
\sum_{x \in B_L} \la ( \cA_L u_t)^{2p} \, | x \ra \\
 \lesssim \delta^2 t \sum_{x\in B_L}\sum_{e\in\B_L} \la a_e((u_t^e)^p - u_t^p)^2 \, | \, x \ra +  \sum_{x \in B_L} \la (\pi^\ell_L u_t)^{2p} \1^\rho(\M_L^\ell) \, | \, x \ra + C(\delta) t^{-100 p d}.
\end{multline}

\subsection{Conclusion}

Summarizing, it follows from Proposition~\ref{p.localization} and \eqref{e.decomposition2} that for $\ell = \lfloor \delta \sqrt{t} \rfloor \vee 1$ and $L = \lfloor \sqrt{t} \log^2 t \rfloor \vee 1$,
\begin{multline} \label{e.intermediate}
|| u_t ||_{2p}^{2p}  \lesssim \delta^2 \, t\sum_{x\in\Z^d}\sum_{e\in\B} \la a_e( (u_{t}^e)^p - (u_t)^p)^2 \, | \, x \ra \\
 + \sum_{x \in B_L} \la \Ll(\pi^{\ell}_{L} u_{t}(\M^{\ell}_{L},x) \Rr)^{2p} \1^\rho(\M_L^\ell)  \,\big|\,   x \ra + C(\delta ) t^{(1-2p)\frac d 2}.
\end{multline}
In order to complete the proof of Proposition~\ref{prop.goal}, it therefore suffices to show the following.
\begin{proposition}\label{p.entropy}
There exists a constant $C'(\delta) = C'(d,\rho,f,p,\delta) < \infty$ such that for every $t > 0$, we have
\[
\sum_{x \in B_L} \la \bigl( \pi^{\ell}_{L}  u_t(\M^{\ell}_{L},x) \bigr)^{2p}  \1^\rho(\M^\ell_L)  \, \big| \,   x \ra \leq C'(\delta) \,  t^{ (1 - 2p) \frac d 2}.
\]
\end{proposition}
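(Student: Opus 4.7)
The plan is to mimic the last step of the heat-equation sketch of Subsection~\ref{ss.standard}, namely to prove the uniform pointwise bound
\begin{equation}  \label{e.linfty-prop}
\Ll\|\pi^{\ell}_{L} u_t \cdot \1^\rho(\M^\ell_L)\Rr\|_\infty \le C(d,\rho,f)/|B_\ell|,
\end{equation}
and to combine it with the $L^1$-type estimate $\sum_{x \in B_L} \la \pi^\ell_L u_t \cdot \1^\rho(\M^\ell_L) \,|\, x\ra \le C$ via the pointwise decomposition $(\pi^\ell_L u_t)^{2p}\1^\rho \le \|\pi^\ell_L u_t \1^\rho\|_\infty^{2p-1}(\pi^\ell_L u_t \1^\rho)$. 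Since $|B_\ell|^{-(2p-1)} \lesssim \delta^{-(2p-1)d}\,t^{-(2p-1)d/2}$, this yields the announced bound. The $L^1$-type estimate is obtained via a swap of summations analogous to that in the proof of \eqref{m.restricted}, which reduces it to $\sum_{y \in B_L}\la u_t\,|\,y\ra \le \|u_t\|_1 = \|f\|_1$, the last identity being conservation of mass.

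To establish \eqref{e.linfty-prop}, set $q_i(\mathbf{m}) := \la \1_{\M^\ell_L = \mathbf{m}} \,|\, y\ra$ for any $y \in B_{\ell,i}$, and note that by the definition of $\pi^\ell_L$ and non-negativity of $u_t$,
\[
\pi^\ell_L u_t(\mathbf{m},x) \cdot q_{i(x)}(\mathbf{m}) = |B_\ell|^{-1} \sum_{y \in B_\ell(x)} \la u_t \cdot \1_{\M^\ell_L = \mathbf{m}} \,|\, y\ra \le |B_\ell|^{-1} \sum_{y\in\Z^d} \la u_t \cdot \1_{\M^\ell_L = \mathbf{m}} \,|\, y\ra.
\]
The key observation is that $h(\eta) := \1_{\M^\ell_L = \mathbf{m}}$ is a function of $\eta$ alone; for such an $h$ the generator $\L$ reduces to the Kawasaki generator on $\{0,1\}^{\Z^d}$, hence $P_t h = \ov P_t h$, where $\ov P_t$ denotes the Kawasaki semigroup. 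Reversibility of $P_t$ with respect to $\sum_y \la\,\cdot\,|\,y\ra$ combined with the locality of $f$ (supported on $z \in B_{r_0}$) then gives
\[
\sum_{y\in\Z^d} \la u_t \cdot h \,|\, y\ra = \sum_{y\in\Z^d} \la f \cdot \ov P_t h \,|\, y\ra \le \|f\|_\infty \sum_{z \in B_{r_0}} \la \ov P_t \1_{\M^\ell_L = \mathbf{m}} \,|\, z\ra.
\]

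The crucial estimate is then $\la \ov P_t \1_{\M^\ell_L=\mathbf{m}} \,|\, z\ra \le C q^*(\mathbf{m})/\rho$, with $q^*(\mathbf{m}) := \P_{\mu_\rho}[\M^\ell_L = \mathbf{m}]$ the Bernoulli product probability of the event. Using $\la \,\cdot\,|\,z\ra = \rho^{-1}\E_{\mu_\rho}[\eta(z)\,\cdot\,]$ and the self-adjointness of $\ov P_t$ on $L^2(\mu_\rho)$, this equals $\rho^{-1}\E_{\mu_\rho}[\1_{\M^\ell_L = \mathbf{m}}\cdot \ov P_t(\eta\mapsto\eta(z))(\eta)]$. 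The duality identity $\ov P_t(\eta\mapsto\eta(z))(\eta) = \sum_y p_t(z,y)\,\eta(y)$, where $p_t$ is the continuous-time simple random walk kernel on $\Z^d$ (which follows from $\L_{\mathsf{Kaw}}\eta(z) = \sum_{y\sim z}(\eta(y)-\eta(z))$), rewrites the right side as $\rho^{-1}\sum_y p_t(z,y)\,\P_{\mu_\rho}[\eta(y)=1,\M^\ell_L=\mathbf{m}]$. A direct computation gives $\P_{\mu_\rho}[\eta(y)=1,\M^\ell_L=\mathbf{m}] = m_{i(y)}q^*(\mathbf{m})/|B_\ell|$ for $y\in B_L$ and $\rho\,q^*(\mathbf{m})$ otherwise; both are bounded by $q^*(\mathbf{m})$ on the $\1^\rho$-range (where $m_{i(y)}/|B_\ell| \le (\rho+1)/2$), and since $\sum_y p_t(z,y) = 1$, the estimate follows. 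Plugging this into the previous displays and using the reverse comparison $q^*(\mathbf{m})/q_{i(x)}(\mathbf{m}) = \rho|B_\ell|/m_{i(x)} \le 2$ on $\1^\rho$ yields \eqref{e.linfty-prop}.

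The main technical challenge is a careful bookkeeping of the various conditional measures and the identity $q_i(\mathbf{m}) = m_iq^*(\mathbf{m})/(\rho|B_\ell|)$, with the restriction to the typical event $\1^\rho$ entering crucially to keep the ratios $q_i(\mathbf{m})/q^*(\mathbf{m})$ bounded above and below. The guiding idea is that the highly singular density $\1_{\M^\ell_L=\mathbf{m}}$ evolves, under the tagged-particle-free Kawasaki dynamics, through the classical simple random walk heat kernel via duality, and therefore retains the Bernoulli equilibrium weight $q^*(\mathbf{m})$ up to a multiplicative constant.
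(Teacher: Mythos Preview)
Your proof is correct and follows essentially the same route as the paper: both arguments use reversibility to transfer $P_t$ onto the $\eta$-only density $\1_{\M^\ell_L=\mathbf{m}}$ (equivalently the Radon--Nikodym derivative $\tilde h^M$), observe that $P_t$ reduces to the Kawasaki semigroup there, and deduce the bound $|B_\ell|^{-(2p-1)}$. The only notable difference is that for the key estimate $\la \ov P_t \1_{\M^\ell_L=\mathbf{m}}\,|\,z\ra \le \rho^{-1}q^*(\mathbf{m})$ you invoke the one-point duality $\ov P_t\eta(z)=\sum_y p_t(z,y)\eta(y)$ and compute explicitly, whereas the paper obtains the equivalent bound $\la \tilde P_t \tilde h^M\,|\,y\ra \le \rho^{-1}$ more cheaply via $\la \tilde P_t \tilde h^M\,|\,y\ra \le \rho^{-1}\la \tilde P_t \tilde h^M\ra \le \rho^{-1}\la\tilde h^M\ra=\rho^{-1}$, i.e.\ plain $L^1$ contraction; your $L^\infty/L^1$ interpolation is also packaged slightly differently, but the content is the same.
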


\medskip

From now on, we denote by $\langle \,\cdot\, \rangle$ the ``pure'' Kawasaki measure on the lattice~$\Zd$, i.e. the product measure of independent Bernoulli $\{\eta(y)\}_{y \in \Zd}$ with parameter $\rho$.
We also define the operator $\L_K$ associated to the Kawasaki dynamic acting on a random variable $f=f(\eta)$ as
\begin{align}\label{pure.kawasaki}
\L_K f(\eta) := \sum_{e \in \B} ( f(\eta^e) - f(\eta) ).
\end{align}
For every $x \in B_L$, we denote with $h^M(x, \cdot)$ the Radon-Nikodym derivative of the measure $\langle \,\cdot \, | \, \M^\ell_L, x \rangle$ with respect to $\langle \,\cdot \, | \, x \rangle$, i.e. for $g \in L^1(\Omega)$ we have
\begin{align}\label{R-D}
\langle g(x ,\cdot ) \, |\, \M_L^\ell=M, x \rangle = \langle g(x, \cdot ) h^M(x,\cdot )  \, | \, x \rangle. 
\end{align}
Analogously, we denote by $\tilde h^M $ the Radon-Nikodym derivative of $\langle \,\cdot \, | \, \M_L^\ell \rangle$ with respect to $\langle \,\cdot\, \rangle$. We have the following lemma.
\begin{lemma}\label{projection.kawasaki}
Let the vector $M=(M_1,\ldots,M_m)$ be fixed and such that $M_i \geq 1$ for all $i= 1, ..., m$.
For any $x \in B_L$, we define $i(x)$  such that $B_{\ell, i(x)} = B_\ell(x)$. We have
\begin{equation}\label{R-D.identity}
h^{M}(x, \eta ) =  \frac{\rho| B_\ell |}{M_{i(x)}} \tilde h^M( \eta ),
\end{equation}
in the sense that for every random variable $g= g( X, \eta)$, we have
\begin{align*}
\langle  g(X, \cdot ) \,  |\, \M_L^\ell=M , x \rangle =  \frac{\rho| B_\ell |}{M_{i(x)}} \langle  g(X, \cdot ) \, \tilde h^M( \cdot ) \, | \, x  \rangle.
\end{align*}
\end{lemma}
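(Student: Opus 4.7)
The plan is to write each density explicitly as an indicator divided by a probability, and then reduce the identity to a ratio of two binomial-type normalizing constants, one of which is shifted by one because the tagged particle's site is already occupied under $\langle \cdot \, | \, x \rangle$.

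First, I would observe that for any random variable $g$ depending on $(X,\eta)$ evaluated at $X=x$, one has
\begin{equation*}
\langle g(x,\cdot) \, | \, \M^\ell_L = M, x \rangle = \frac{\langle g(x,\cdot) \, \mathbf{1}_{\{\M^\ell_L = M\}} \, | \, x \rangle}{\langle \mathbf{1}_{\{\M^\ell_L = M\}} \, | \, x \rangle},
\end{equation*}
so that
\begin{equation*}
h^M(x,\eta) = \frac{\mathbf{1}_{\{\M^\ell_L(\eta) = M\}}}{\langle \mathbf{1}_{\{\M^\ell_L = M\}} \, | \, x \rangle}, \qquad \tilde h^M(\eta) = \frac{\mathbf{1}_{\{\M^\ell_L(\eta) = M\}}}{\langle \mathbf{1}_{\{\M^\ell_L = M\}} \rangle}.
\end{equation*}
In particular $h^M$ and $\tilde h^M$ are supported on the same set of configurations, so the pointwise identity \eqref{R-D.identity} reduces to the single numerical identity
\begin{equation*}
\frac{\langle \mathbf{1}_{\{\M^\ell_L = M\}} \rangle}{\langle \mathbf{1}_{\{\M^\ell_L = M\}} \, | \, x \rangle} = \frac{\rho|B_\ell|}{M_{i(x)}}.
\end{equation*}

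Next I would compute both numerator and denominator explicitly using the product structure. Under the unconditional measure $\langle \cdot \rangle$, the variables $(\eta(y))_{y \in \Zd}$ are i.i.d.\ Bernoulli$(\rho)$, and the occupation counts in distinct boxes of the partition $(B_{\ell,i})_{i=1}^m$ are independent. Hence
\begin{equation*}
\langle \mathbf{1}_{\{\M^\ell_L = M\}} \rangle = \prod_{i=1}^{m} \binom{|B_\ell|}{M_i} \rho^{M_i} (1-\rho)^{|B_\ell| - M_i}.
\end{equation*}
Under $\langle \cdot \, | \, x \rangle$, the site $x \in B_{\ell, i(x)}$ is deterministically occupied and the remaining $|B_\ell| - 1$ sites of $B_{\ell, i(x)}$ carry i.i.d.\ Bernoulli$(\rho)$ variables, so the $i(x)$-th factor becomes $\binom{|B_\ell|-1}{M_{i(x)}-1} \rho^{M_{i(x)}-1}(1-\rho)^{|B_\ell|-M_{i(x)}}$, while the other factors are identical.

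Taking the ratio, all factors indexed by $i \neq i(x)$ cancel and the only surviving contribution is
\begin{equation*}
\frac{\binom{|B_\ell|}{M_{i(x)}} \rho^{M_{i(x)}}}{\binom{|B_\ell|-1}{M_{i(x)}-1} \rho^{M_{i(x)}-1}} = \rho \cdot \frac{|B_\ell|}{M_{i(x)}},
\end{equation*}
using the elementary identity $\binom{n}{k}/\binom{n-1}{k-1} = n/k$. This establishes the claimed identity. The assumption $M_{i(x)} \ge 1$ (covered by the hypothesis $M_i \ge 1$ for all $i$) is what makes the denominator well-defined; otherwise the event $\{\M^\ell_L = M\}$ would be impossible under $\langle \cdot \, | \, x \rangle$. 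There is essentially no obstacle here beyond carefully writing down the two binomial normalizations and canceling.
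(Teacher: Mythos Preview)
Your proof is correct and follows essentially the same approach as the paper: both express the Radon--Nikodym densities as indicators divided by normalizing probabilities and reduce the identity to the ratio $\langle \mathbf{1}_{\{\M_L^\ell = M\}}\rangle / \langle \mathbf{1}_{\{\M_L^\ell = M\}} \,|\, x\rangle$, which is computed via the product structure and the binomial identity $\binom{n}{k}/\binom{n-1}{k-1}=n/k$. The only cosmetic difference is that the paper routes the calculation through the unconditioned Bernoulli measure using the indicator $\mathbf{1}_{\{\eta(x)=1\}}$, while you work directly with $\langle \cdot \,|\, x\rangle$; your binomial computation is in fact more explicit than the paper's one-line justification.
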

\begin{proof}
Let  $\1_{M}$ be the indicator function of the event $\M_L^\ell=M$ (i.e. of having $M_i$ particles in each of the boxes $B_{\ell, i}$ partitioning $B_L$), let $g= g(X, \eta)$ and $\tilde g = \tilde g( \eta)$. 
Since we may write
\begin{align*}
\langle  \tilde g( \cdot ) \,|\, \M_L^\ell=M \rangle =\langle  \tilde g( \cdot ) \, \tilde h^M( \cdot)  \rangle = \la \tilde g( \cdot ) \, \frac{\1_{M}( \cdot) }{\langle \1_M \rangle }  \ra,
\end{align*}
and for every $x \in B_L$
\begin{align*}
\langle  g(X, \cdot) \, | \, \M_L^\ell=M ,  x \rangle = \langle g(x, \cdot) h^M(x, \cdot) \ | \  x \rangle = \la  g(x, \cdot)  \frac{ \1_{\{ \eta(x)= 1\}}(\cdot) \1_{M}(\cdot) }{ \langle \1_{\{ \eta(x)= 1\}} \1_{M} \rangle }   \ra,
\end{align*}
it holds that
\[
\begin{aligned}
\langle  g(X, \cdot) \, | \, \M_L^\ell=M ,  x \rangle  & = \frac{\langle \1_M  \rangle}{ \langle \1_{\{ \eta(x)= 1\}} \1_{M} \rangle }  \langle  g(x, \cdot)  \1_{\{ \eta(x)= 1\}}(\cdot)\tilde h^M(\cdot)\rangle \\
&=\rho \frac{\langle  \1_M  \rangle}{ \langle \1_{\{ \eta(x)= 1\}} \1_{M} \rangle }  \langle g(X, \cdot) \tilde h^M(\cdot) \, | \,  x  \rangle.
\end{aligned}
\]
We establish identity \eqref{R-D.identity} by observing that by the independence of each $\eta(y)$ and the construction of the vector $\M_L^\ell$, we have
\begin{equation*}
\frac{\langle  \1_M  \rangle}{ \langle \1_{\{ \eta(x)= 1\}} \1_{M} \rangle }  =  \frac{| B_\ell |}{M_{i(x)}}. \qedhere
\end{equation*}
\end{proof}

\begin{proof}[Proof of Proposition~\ref{p.entropy}]
For any $x \in B_L$, by the definition of $h^M(x, \cdot)$ through \eqref{R-D}, we rewrite
\[
\begin{aligned}
\sum_{x \in B_L} \la \bigl(\pi^{\ell}_{L} u_{t}(\M^{\ell}_{L},x) \bigr)^{2p}   \1^\rho(\M_L^\ell)  \,|\,   x \ra=& \sum_{x \in B_L} \la \bigl(|B_\ell|^{-1} \sum_{y\in B_\ell(x)} \la  u_{t} \, | \, \M_L^\ell, y  \ra \bigr)^{2p}   \1^\rho(\M_L^\ell)  \,\bigg|\,   x \ra \\
=&|B_\ell|^{-2p} \sum_{x \in B_L} \la \bigl(\sum_{y\in B_\ell(x)} \la u_th^M   \, | \,  y \ra \bigr)^{2p}  \1^\rho(\M_L^\ell)   \,\bigg|\,   x \ra,
\end{aligned}
\]
where here and in the following, we interpret $\langle u_th^{M}   \, | \,  y\rangle$ and $\langle u_t\tilde{h}^{M}   \, | \,  y\rangle$ as random variables with $M=\M_L^\ell$. 
By our restriction on the values of $\M_L^\ell$ given by the random variable $\1^\rho(\M_L^\ell)$, we can appeal to Lemma \ref{projection.kawasaki} to derive
\begin{align*}
&\sum_{x \in B_L} \la \bigl(\pi^{\ell}_{L} u_{t}(\M^{\ell}_{L},x) \bigr)^{2p}   \1^\rho(\M_L^\ell)  \,|\,   x \ra\\
&= \rho^{2p} |B_\ell|^{-2p} \sum_{x \in B_L} \la \frac{| B_\ell |^{2p}}{M_{i(x)}^{2p}}
\biggl(\sum_{y\in B_\ell(x)} \la  u_{t}(y, \eta)\tilde h^M(\eta) \, | \, y  \ra \biggr)^{2p}  \1^\rho(\M_L^\ell)   \,\bigg|\,   x \ra.
\end{align*}
We bound the above term by
\[
|B_\ell|^{-2p} \sum_{x \in B_L} \la
\biggl(\sum_{y\in B_\ell(x)} \la  u_{t}(y, \eta)\tilde h^M(\eta) \, | \, y  \ra \biggr)^{2p}\1^\rho(\M_L^\ell)    \,\bigg|\,   x \ra.
\]
For fixed $i$, the above expectation is independent of $x\in B_{\ell,i}$, so we write for an arbitrary $x_i\in B_{\ell,i}$
\begin{equation}\label{e.entropy2}
\begin{aligned}
&|B_\ell|^{-2p} \sum_{x \in B_L} \la
\biggl(\sum_{y\in B_\ell(x)} \la  u_{t}(y, \eta)\tilde h^M(\eta) \, | \, y  \ra \biggr)^{2p}\1^\rho(\M_L^\ell)    \,\bigg|\,   x \ra\\
&=|B_\ell|^{-2p+1} \sum_{i=1}^m \la
\biggl(\sum_{y\in B_{\ell,i}} \la  u_{t}(y, \eta)\tilde h^M(\eta) \, | \, y  \ra \biggr)^{2p}\1^\rho(\M_L^\ell)    \,\bigg|\,   x_i \ra\\
&=|B_\ell|^{-2p+1} \sum_{i=1}^m \la g_i(\M_L^\ell)\1^\rho(\M_L^\ell)  \,|\,x_i\ra,
\end{aligned}
\end{equation}
with 
\[
g_i(\M_L^\ell):=\biggl(\sum_{y\in B_{\ell,i}} \la  u_{t}(y, \eta)\tilde h^M(\eta) \, | \, y  \ra \biggr)^{2p}.
\]
 We claim that \eqref{e.entropy2} can be bounded by 
\[
|B_\ell|^{-2p+1} \sum_{i=1}^m \la g_i(\M_L^\ell)\1^\rho(\M_L^\ell)  \,|\,x_0\ra= \la |B_\ell|^{-2p+1}\sum_{i=1}^m g_i(\M_L^\ell)\1^\rho(\M_L^\ell)  \,\bigg|\, x_0\ra
\]
for an arbitrary $x_0\notin B_L$. Consider $\la g_i(\M_L^\ell)\1^\rho(\M_L^\ell)  \,|\,x_i\ra$ for any $i$: it can be written as
\[
\la g_i(\M_L^\ell)\1^\rho(\M_L^\ell) \,|\,x_i\ra=\sum_{N_1,\ldots,N_m\in \Z_{\geq0}} g_i(N_1,\ldots,N_m)\1^\rho(N_1,\ldots,N_m) \prod_{j=1}^m \la \1_{\{M_j=N_j\}} \,|\, x_i\ra.
\]
For $j\neq i$, we have $ \la \1_{\{M_j=N_j\}} \,|\, x_i\ra= \la \1_{\{M_j=N_j\}} \,|\, x_0\ra$, and since $g_i\geq 0$, it suffices to show that
\[
 \la \1_{\{M_i=N_i\}} \,|\, x_i\ra\les  \la \1_{\{M_i=N_i\}} \,|\, x_0\ra,
 \]
 which is equivalent with
 \[
\binom{|B_\ell|-1}{N_i-1}\rho^{N_i-1}(1-\rho)^{|B_\ell|-N_i}\les  \binom{|B_\ell|}{N_i}\rho^{N_i}(1-\rho)^{|B_\ell|-N_i}.
 \]
 Using the simple estimate
\[
|B_\ell|^{-2p+1}\sum_{i=1}^m g_i(\M_L^\ell)\1^\rho(\M_L^\ell) \leq |B_\ell|^{-2p+1}\biggl(\sum_{y\in \Z^d} \la  u_{t}(y, \eta)\tilde h^M(\eta) \, | \, y  \ra \biggr)^{2p},
\]
we have proved
\begin{equation}\label{e.entropy3}
\sum_{x \in B_L} \la \bigl(\pi^{\ell}_{L} u_{t}(\M^{\ell}_{L},x) \bigr)^{2p}   \1^\rho(\M_L^\ell)  \,|\,   x \ra \les  |B_\ell|^{-2p+1} \la \biggl(\sum_{y\in \Z^d} \la  u_{t}(y, \eta)\tilde h^M(\eta) \, | \, y  \ra \biggr)^{2p} \,\bigg|\,x_0\ra.
\end{equation}

We remark that the term  $\sum_{y\in \Z^d} \la  u_{t}(y, \eta)\tilde h^M(\eta) \, | \, y  \ra$ in the r.h.s. above depends on the variables with respect to which we take the outer expectation $\langle \, \, \cdot \, \,  | \, x_0 \rangle$, only through the vector $M$. In other words, if we denote with $ \tilde\eta$ the configuration with respect to which the measure $\langle \, \, \cdot \, \,  | \, x_0 \rangle$ is defined, then we have that $M = M(\tilde \eta)$ and that the r.h.s. in \eqref{e.entropy3} may be rewritten as
\begin{align*}
|B_\ell|^{-2p+1} \la \biggl(\sum_{y\in \Z^d} \la  u_{t}(y, \eta)\tilde h^{M(\tilde \eta)}(\eta) \, | \, y  \ra_{\eta} \biggr)^{2p} \,\bigg| \,x_0\ra_{\tilde\eta}.
\end{align*}
Here, with $\eta$ or $\tilde\eta$ subscript in the expectations we stress the variable with respect to which each expectation is taken. Therefore, it follows that for every fixed $\tilde\eta$, and accordingly fixed $M= M(\tilde\eta)$, we may apply to the term $\sum_{y\in \Z^d} \la  u_{t}(y, \eta)\tilde h^M(\eta) \, | \, y  \ra_\eta$ reversibility and rewrite it as
\begin{align*}
\sum_{y\in \Z^d} \la  u_{t}(y, \eta)\tilde h^M(\eta) \, | \, y  \ra_\eta = \sum_{y\in \Z^d} \la  f(y, \eta) P_t\tilde h^M(\eta) \, | \, y  \ra_\eta.
\end{align*}
Moreover, the locality of $f$ yields 
\begin{equation}\label{e.entropy4}
\begin{aligned}
\sum_{y\in \Z^d} \la  u_{t}(y, \eta)\tilde h^M(\eta) \, | \, y  \ra_\eta \les \sum_{y\in B_{r_0}}\la  P_t\tilde h^M(\eta)\, \big| \, y  \ra_\eta
\end{aligned}
\end{equation}
where $B_{r_0}$ denotes the support of $f$.
We now observe that, since $\tilde h^M$ does not depend on the tagged particle $X$, it holds that $\L \tilde h^M = \L_K \tilde h^M$, and therefore also that $P_t \tilde h^M = \tilde P_t \tilde h^M$, where $\tilde P_t:=e^{t\L_K}$. Therefore, the contractivity of the
$L^1$-norm for the Kawasaki dynamics yields that
\begin{align*}
\langle P_t\tilde h^M \, | \, y  \rangle_\eta = \langle \tilde P_t\tilde h^M \, | \, y \rangle_\eta  = \rho^{-1} \la \tilde P_t\tilde h^M  \1_{\{\eta(y) = 1\}}\ra \leq \rho^{-1} \langle \tilde P_t\tilde h^M \, \rangle \leq \rho^{-1} \langle \tilde h^M \rangle = \rho^{-1}. 
\end{align*}
Combining this last inequality with \eqref{e.entropy4} and \eqref{e.entropy3} yields
\begin{align*}
\sum_{x \in B_L} \la \bigl(\pi^{\ell}_{L} u_{t}(\M^{\ell}_{L},x) \bigr)^{2p}   \1^\rho(\M_L^\ell)  \,|\,   x \ra \lesssim  |B_\ell|^{-2p+1}.
\end{align*}
Our choice of $\ell= \lfloor \delta \sqrt{t} \rfloor \vee 1$ allows us to conclude the proof of Proposition~\ref{p.entropy}.
\end{proof}

We can now conclude the proof of Proposition~\ref{prop.goal}, and therefore also of Theorems~\ref{HKbound} and \ref{t.main}.

\begin{proof}[Proof of Proposition~\ref{prop.goal}]
Proposition~\ref{p.entropy} applied to the second term on the right side of \eqref{e.intermediate} yields
\begin{align*}
|| u_t ||_{2p}^{2p} & \lesssim \delta^2 \, t\sum_{x\in \Z^d}\sum_{e\in\B} \la a_e( (u_{t}^e)^p - (u_t)^p)^2 \, | \, x \ra
+ C(\delta) t^{\frac d 2 (1 - 2p)},
\end{align*}
as desired.
\end{proof}

\appendix

\section{Technical lemmas}

\begin{lemma}
\label{l.counting}
There exists a constant $C=C(\pi,e)$ such that for any $N,N_1,N_2\in \Z_{\geq 1}$ with $N>N_1>N_2>0$, we have 
\[
 \biggl[\binom{N}{N_1 }\biggr]^{-1}\binom{N-N_2}{N_1-N_2}\leq C\sqrt{\frac{N_1(N-N_2)}{N(N_1-N_2)}} \left(\frac{N_1}{N}\right)^{N_2}.
 \]
\end{lemma}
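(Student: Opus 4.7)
The plan is to prove this by a direct application of Stirling's formula followed by an elementary monotonicity argument. First I would rewrite the left-hand side as a ratio of factorials,
$$\binom{N}{N_1}^{-1}\binom{N-N_2}{N_1-N_2} = \frac{N_1!\,(N-N_2)!}{N!\,(N_1-N_2)!},$$
and then apply Stirling's formula $n! = \sqrt{2\pi n}\,(n/e)^n e^{r_n}$ with $0\le r_n \le 1/(12n)$ to each of the four factorials. The $e^{-n}$ exponents cancel exactly, since $N_1 + (N-N_2) - N - (N_1-N_2) = 0$, and the four $\sqrt{2\pi}$ factors pair off to give $1$; what remains is
$$\frac{N_1!\,(N-N_2)!}{N!\,(N_1-N_2)!} = e^{R}\sqrt{\frac{N_1(N-N_2)}{N(N_1-N_2)}}\cdot\frac{N_1^{N_1}\,(N-N_2)^{N-N_2}}{N^N\,(N_1-N_2)^{N_1-N_2}},$$
with $|R| \le r_{N_1}+r_{N-N_2}+r_N+r_{N_1-N_2} \le 1/3$, so that $e^R$ is bounded by an absolute constant depending only on $\pi$ and $e$.

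It therefore suffices to prove the polynomial inequality
$$\frac{N_1^{N_1}\,(N-N_2)^{N-N_2}}{N^N\,(N_1-N_2)^{N_1-N_2}}\le\left(\frac{N_1}{N}\right)^{N_2}.$$
Taking logarithms and cancelling the terms proportional to $\log N_1 - \log N$ that arise on both sides, this reduces after rearrangement to
$$(N_1-N_2)\,\log\!\left(1 + \frac{N_2}{N_1-N_2}\right)\le (N-N_2)\,\log\!\left(1 + \frac{N_2}{N-N_2}\right).$$

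For the final step, I would establish the monotonicity of the function $f(x) := x\,\log(1+N_2/x)$ on $(0,\infty)$; since $N_1 - N_2 \le N - N_2$, the monotonicity immediately gives the displayed inequality. A direct computation yields
$$f'(x) = \log\!\left(1 + \tfrac{N_2}{x}\right) - \frac{N_2/x}{1+N_2/x},$$
and substituting $u = N_2/x$ this becomes $g(u) := \log(1+u) - u/(1+u)$. Since $g(0) = 0$ and $g'(u) = u/(1+u)^2 \ge 0$, we have $g \ge 0$ on $[0,\infty)$, so $f' \ge 0$, completing the proof.

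The main point to check carefully is the direction of the inequality in the final step—making sure that $x \mapsto x\log(1+N_2/x)$ is indeed \emph{increasing} (so that evaluating at the larger argument $N-N_2$ gives the upper bound). Everything else is routine Stirling bookkeeping. As an aside, one can also observe the sharper identity $\binom{N}{N_1}^{-1}\binom{N-N_2}{N_1-N_2} = \prod_{k=0}^{N_2-1}(N_1-k)/(N-k) \le (N_1/N)^{N_2}$, which already implies the lemma with $C=1$ since the square-root factor in the statement exceeds $1$; but the Stirling route is the one that naturally produces the stated form.
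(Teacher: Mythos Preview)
Your proof is correct and follows essentially the same route as the paper: both apply Stirling's formula to extract the square-root prefactor and then reduce to an elementary logarithmic inequality, which the paper verifies via concavity of $x\mapsto \rho\log\rho+(1-x)\log(1-x)-(\rho-x)\log(\rho-x)$ and you verify via monotonicity of $x\mapsto x\log(1+N_2/x)$---these are two packagings of the same calculus fact. Your aside that the product representation already gives the bound with $C=1$ is a nice observation not present in the paper.
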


\begin{proof}
Defining $\rho:=N_1/N,x=N_2/N$, we apply Stirling's formula to derive
\[
\begin{aligned}
\biggl[\binom{N}{N_1 }\biggr]^{-1}\binom{N-N_2}{N_1-N_2}=\frac{(N-N_2)!(N_1)!}{(N_1-N_2)!N!}\leq &C\sqrt{\frac{N_1(N-N_2)}{N(N_1-N_2)}} \left[\frac{(1-x)^{1-x}\rho^{\rho}}{(\rho-x)^{\rho-x}}\right]^N\\
=&C\sqrt{\frac{N_1(N-N_2)}{N(N_1-N_2)}}  e^{Nf_\rho(x)},
\end{aligned}
\]
where
\[
f_\rho(x):=\rho\log\rho+(1-x)\log(1-x)-(\rho-x)\log(\rho-x), \  x\in [0,\rho).
\]
A straightforward calculation gives $f_\rho(0)=0$, $f'_\rho(0)=\log\rho$, and $f''_\rho(x)<0$, thus 
\[
f_\rho(x)\leq x\log\rho.
\]
The proof is complete.
\end{proof}

\begin{lemma}
\label{l.exma}
Fix any $x\in \Z^d$, let $\xi(z)=|z-x|$, and define the martingale 
\[
M_t = \xi(\X_t) - \xi(\X_0) - \int_0^t L\xi(\X_s,\bfeta_s) \, \d s.
\] 
For every $(x,\eta) \in \Omega$ and $t,\lambda > 0$, we have
\begin{equation*} 
\EE_{(x,\eta)} \Ll[ \exp(\lambda M_t) \Rr] \le \exp \Ll( 2d  \Ll(e^\lambda - 1 - \lambda\Rr) t \Rr) .
\end{equation*}
\end{lemma}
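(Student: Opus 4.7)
The plan is to apply Dynkin's formula to the function $e^{\lambda m}$ along the augmented process $(\X_t, \bfeta_t, M_t)$, observe a cancellation between the martingale's jump and drift contributions, and close the estimate by a Gronwall argument.

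More precisely, I would view $M_t$ as a component of a Markov process with extended state $(x,\eta,m)$; the infinitesimal generator acts on $f(x,\eta,m)$ by
\[
\widetilde\L f(x,\eta,m) = \sum_{e\in\B} a_e(\eta)\bigl[f(x^e,\eta^e, m + \xi(x^e)-\xi(x)) - f(x,\eta,m)\bigr] \; - \; L\xi(x,\eta)\,\partial_m f(x,\eta,m),
\]
where the last term encodes the drift $-\int_0^t L\xi \,\d s$ appearing in the definition of $M_t$. Applied to $f(x,\eta,m) = e^{\lambda m}$, this becomes
\[
\widetilde\L f(x,\eta,m) = e^{\lambda m}\sum_{e\in\B} a_e(\eta)\bigl[e^{\lambda(\xi(x^e)-\xi(x))} - 1 - \lambda(\xi(x^e)-\xi(x))\bigr],
\]
after using the identity $L\xi(x,\eta)=\sum_e a_e(\eta)(\xi(x^e)-\xi(x))$. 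This is the key algebraic step: combining drift and jump contributions produces exactly the ``$e^u-1-u$'' form, which is non-negative and explicitly bounded for small jumps.

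Next I would exploit the locality of $\xi$ to simplify the sum. Edges $e\not\ni x$ contribute zero since then $x^e=x$. For edges $e=\{x,y\}$ with $y\sim x$, the increment $\xi(x^e)-\xi(x) = \xi(y)-\xi(x)$ lies in $\{-1,0,1\}$ because $\xi$ is $1$-Lipschitz. Using $a_e\le 1$ and the elementary inequality
\[
e^u - 1 - u \le e^\lambda - 1 - \lambda \qquad \text{for every } u\in[-\lambda,\lambda],\ \lambda\ge 0
\]
(which follows from $\sinh(\lambda)\ge \lambda$), the bracket in the sum is dominated by $e^\lambda-1-\lambda$. Since there are exactly $2d$ neighbors of $x$, we conclude
\[
\widetilde\L f(x,\eta,m) \;\le\; 2d\,(e^\lambda-1-\lambda)\, e^{\lambda m}.
\]
Setting $g(t):=\EE_{(x,\eta)}[e^{\lambda M_t}]$, Dynkin's formula yields $g'(t)\le 2d(e^\lambda-1-\lambda)\,g(t)$ with $g(0)=1$, and Gronwall's inequality gives the claim.

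The only genuinely delicate point I see is the justification of Dynkin's formula in this setting: we apply it to the unbounded function $e^{\lambda m}$ on an infinite-volume process. However, since only the $2d$ edges incident to the tagged particle contribute to the generator at $(x,\eta)$, the quantity $\widetilde\L f$ is well-defined pointwise, and a standard localization/stopping argument (stop the process when $|M_t|$ exceeds $N$, apply Dynkin there, let $N\to\infty$ by monotone/dominated convergence using the uniform bound above) makes the computation rigorous. This is the only step that might require some care; the rest is a direct calculation.
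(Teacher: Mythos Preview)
Your argument is correct and arrives at exactly the same bound, but it proceeds along a genuinely different technical route from the paper. The paper works with the exponential supermartingale $E_t = \exp(\lambda M_t - e(\lambda)\langle M\rangle_t)$, applies It\^o's formula for jump processes, invokes Freedman's inequality $e^{\lambda x}\le 1+\lambda x + e(\lambda)x^2$ for $x\le 1$ to control the jump terms through the predictable quadratic variation, and then concludes from $\langle M\rangle_t\le 2dt$. You instead compute the generator of the extended process $(\X_t,\bfeta_t,M_t)$ on $e^{\lambda m}$ directly, observe the same ``$e^u-1-u$'' cancellation between jump and drift, and bound it pointwise by $e^\lambda-1-\lambda$ over the $2d$ incident edges, closing via Gronwall. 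The advantage of your approach is that it is more self-contained: it avoids the stochastic-calculus machinery (bracket processes, It\^o for bounded-variation semimartingales, the references to Protter and Jacod--Shiryaev) and the intermediate passage through the quadratic variation. The paper's approach, on the other hand, isolates a statement (``$E_t$ is a supermartingale'') that applies to any martingale with jumps bounded by $1$, not only one arising from a Markov generator; it is the standard packaging in the literature on exponential martingale inequalities. Your caveat about justifying Dynkin's formula for the unbounded test function via stopping is apt and the sketched argument is fine.
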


\begin{proof}

The proof is inspired by \cite{freedman}. We fix $\lambda \ge 0$, $e(\lambda) := e^\lambda - 1- \lambda$, and show that the process $(E_t)_{t \ge 0}$ defined by
\begin{equation*}  \label{e.}
E_t := \exp \Ll( \lambda M_t - e(\lambda) \langle M \rangle_t \Rr) 
\end{equation*}
is a supermartingale under $\PP_{(x,\eta)}$, where $(\la M \ra_t)_{t \ge 0}$ denotes the predictable quadratic variation of $M$. The conclusion then follows since $\E[E_t] \le \E[E_0] =1$ and $\la M \ra_t \le 2dt$.

We write $M_{t-}$ to denote the left limit of $M$ at time $t$, and $\Delta M_t := M_t - M_{t-} $ to denote the size of the jump at time $t$. The key ingredient of the argument is that 
\begin{equation}
\label{e.jumps}
\sup_t \Delta M_t \le 1.
\end{equation}
We denote by $([M]_t)_{t \ge 0}$ the bracket process associated with $M$. Since $M$ is of bounded variation, this is simply
\begin{equation}  \label{e.bracket}
[M]_t := \sum_{0 \le s \le t} (\Delta M_t)^2.
\end{equation}
By an extension of the fundamental theorem of calculus that allows for jumps, see e.g.\ \cite[Theorem II.7.31]{protter}, we have, for every $s \le t$, 
\begin{equation}  \label{e.calculus}
E_t - E_s = \int_s^t \lambda E_{r-} \, \d M_r - \int_s^t e(\lambda) E_{r-} \d \la M \ra_r + \sum_{0 \le s \le t} \Ll(\Delta E_r - \lambda E_{r-} \Delta M_r\Rr).
\end{equation}
By \cite[Corollary~3.2]{freedman}, we have
\begin{equation*}  \label{e.}
x \le 1 \quad \implies \quad e^{\lambda x} \le 1 + \lambda x + e(\lambda) x^2.
\end{equation*}
By \eqref{e.jumps}, we deduce that
\begin{equation*}  \label{e.}
\Delta E_r = E_{r-} \Ll( e^{\lambda \Delta M_r} -1\Rr)\le E_{r-} \Ll( \lambda  \Delta M_r + e(\lambda) (\Delta M_r)^2\Rr) .
\end{equation*}
Combining this with \eqref{e.bracket} and \eqref{e.calculus}, we obtain
\begin{equation*}  \label{e.}
E_t - E_s \le \int_s^t \lambda E_{r-} \, \d M_r + \int_s^t e(\lambda) E_{r-} \d \Ll([M]-\la M \ra\Rr)_r.
\end{equation*}
By \cite[Proposition~4.50]{jacshi}, the process $([M]_t - \la M \ra_t)_{t \ge 0}$ is a martingale. The proof is therefore complete.
\end{proof}

\begin{lemma}
\label{l.boundary}
There exists $C(\rho) < \infty$ such that for every $k\in \Z_{\geq 1}$ and $\beta>1$, we have  
\begin{equation}
\begin{aligned}
&\sum_{x\in\Z^d}\sum_{e\in\partial \B_k}\1_{\{x\notin e\}}\la (\cA_k h)^e-\cA_k h, a_e(h^e-h)\, |\, x\ra \\
&\leq C\beta \sum_{x\in\Z^d}\sum_{e\in \partial \B_k} \la a_e(h^e-h)^2\, |\, x\ra+\frac{C}{\beta} \sum_{x\in\Z^d}\left[\la (\cA_{k+1}h)^2\, |\, x\ra-\la (\cA_{k}h)^2\, |\, x\ra\right].
\label{e.bddi}
\end{aligned}
\end{equation}
\end{lemma}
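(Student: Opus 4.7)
The plan is to apply Young's inequality to each product $\la((\cA_k h)^e - \cA_k h)\cdot a_e(h^e-h)\,|\,x\ra$ on the LHS, after first performing the algebraic decomposition
\[
(\cA_k h)^e - \cA_k h \;=\; [(\cA_{k+1} h)^e - \cA_{k+1} h] \;+\; [\Psi - \Psi^e], \qquad \Psi := \cA_{k+1}h-\cA_kh,
\]
which follows from the tower relation $\cA_k = \cA_k\cA_{k+1}$ by adding and subtracting $\cA_{k+1} h$ and its flipped version. The two summands on the right will be treated by two different mechanisms matching the two terms on the RHS.

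The first summand uses that every $e\in\partial\B_k$ lies inside $\B_{k+1}$, so the flip $\eta\mapsto\eta^e$ preserves the $\sigma$-algebra $\mcl F(B_{k+1})$ and gives the commutation $(\cA_{k+1}h)^e=\cA_{k+1}(h^e)$. Hence $(\cA_{k+1} h)^e - \cA_{k+1} h = \cA_{k+1}(h^e-h)$, and Jensen yields $((\cA_{k+1}h)^e-\cA_{k+1}h)^2\leq \cA_{k+1}((h^e-h)^2)$. After taking expectation and applying Young's inequality with parameter $\beta$, this piece is absorbed into $C\beta\sum_{x,e}\la a_e(h^e-h)^2\,|\,x\ra$ — the first term of the RHS.

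For the second summand, the crucial observation is that for $x\notin e$ the flip $\eta\mapsto\eta^e$ is $\la\cdot\,|\,x\ra$-measure-preserving, so a change of variables $\eta\mapsto\eta^e$ yields the integration-by-parts identity
\[
\la a_e(\Psi-\Psi^e)(h^e-h)\,|\,x\ra \;=\; 2\,\la a_e\,\Psi\,(h^e-h)\,|\,x\ra,
\]
converting the antisymmetric factor "$\Psi-\Psi^e$" into a single $\Psi$. A second application of Young's then splits this into $\beta\la a_e(h^e-h)^2\,|\,x\ra$ (folded into the first RHS term) and a contribution involving $\Psi^2$. After summing over $e$ and $x$, the Pythagorean identity $\la\Psi^2\,|\,x\ra=\la(\cA_{k+1}h)^2\,|\,x\ra-\la(\cA_kh)^2\,|\,x\ra$, which expresses the orthogonality of $\cA_k h$ and $\Psi=\cA_{k+1}h-\cA_k h$ under $\la\cdot\,|\,x\ra$, delivers the second term on the RHS.

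The main obstacle is the $e$-summation in the variance piece: the naive estimate $\sum_{e\in\partial\B_k}\la a_e\Psi^2\,|\,x\ra \leq |\partial\B_k|\la\Psi^2\,|\,x\ra$ introduces an extraneous factor $|\partial\B_k|\sim k^{d-1}$ that would be fatal in Lemma~\ref{l.gronwall}. To sidestep it, I would keep the $e$-sum outside the Young splitting, writing $\sum_e\la\Psi\cdot a_e(h^e-h)\,|\,x\ra=\la\Psi\cdot\L_{\partial\B_k}h\,|\,x\ra$, and use the self-adjointness of the boundary generator $\L_{\partial\B_k}$ (again from the measure-preserving property of flips) together with a Cauchy–Schwarz bound pairing the Dirichlet form of $h$ with that of $\Psi$. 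The latter is in turn controlled, via the same decomposition $\Psi^e-\Psi=[(\cA_{k+1}h)^e-\cA_{k+1}h]-[(\cA_k h)^e-\cA_k h]$, by the Dirichlet energy of $h$ plus a small multiple of the LHS of the very estimate being proved, which can then be absorbed by choosing the constant appropriately.
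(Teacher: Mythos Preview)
Your decomposition $(\cA_k h)^e-\cA_k h=[(\cA_{k+1}h)^e-\cA_{k+1}h]+[\Psi-\Psi^e]$ and the treatment of the first bracket are correct and clean. The problem is entirely in the last paragraph, and it is a genuine gap.

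You correctly identify that a naive Young splitting on the $\Psi$-term produces a factor $|\partial\B_k|$, which would be fatal. But your proposed fix does not close. After the integration by parts you are left with $\sum_e\la a_e\Psi(h^e-h)\,|\,x\ra$; rewriting this via self-adjointness of $\L_{\partial\B_k}$ just returns you to $-\tfrac12\sum_e\la a_e(\Psi^e-\Psi)(h^e-h)\,|\,x\ra$, i.e.\ the same expression you started from. The Cauchy--Schwarz step then requires controlling the boundary Dirichlet form $\sum_e\la a_e(\Psi^e-\Psi)^2\,|\,x\ra$. Your decomposition of $\Psi^e-\Psi$ reduces this to the boundary Dirichlet form of $\cA_k h$, namely $\sum_e\la a_e((\cA_k h)^e-\cA_k h)^2\,|\,x\ra$. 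You claim this is ``a small multiple of the LHS of the very estimate being proved,'' but the LHS of the lemma is the \emph{mixed} quantity $\sum_e\la((\cA_k h)^e-\cA_k h)\cdot a_e(h^e-h)\,|\,x\ra$, not the square. There is no absorption argument here: a square cannot be bounded by a bilinear pairing without reintroducing the very Dirichlet form you are trying to control. The circle does not close, and the $|\partial\B_k|$ factor has not been removed.

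The paper's argument avoids the global object $\Psi$ altogether. It works edge by edge: for each $e=(y,z)\in\partial\B_k$ with $z\notin B_k$, it first replaces $h$ by $h_z:=\la h\,|\,\mcl F(B_k\cup\{z\})\ra_x$, reducing to an explicit two-variable computation in $(\eta(y),\eta(z))$. This yields, for each edge, a bound of the form $C\beta\la a_e(h^e-h)^2\,|\,x\ra+\tfrac{C}{\beta}\bigl(\la(\cA_{k+1}h_z)^2\,|\,x\ra-\la(\cA_k h_z)^2\,|\,x\ra\bigr)$. The key is the final assembly: enumerating $B_{k+1}\setminus B_k=\{z_1,\ldots,z_N\}$ and setting $\mcl F_l=\mcl F(B_k\cup\{z_1,\ldots,z_l\})$, one has the telescoping identity
\[
\la(\cA_{k+1}h)^2\,|\,x\ra-\la(\cA_k h)^2\,|\,x\ra=\sum_{l=1}^N\bigl(\la\la h\,|\,\mcl F_l\ra^2\,|\,x\ra-\la\la h\,|\,\mcl F_{l-1}\ra^2\,|\,x\ra\bigr),
\]
and each edge's variance contribution $\la(\cA_{k+1}h_{z_l})^2\,|\,x\ra-\la(\cA_k h_{z_l})^2\,|\,x\ra$ is dominated by a single increment in this sum. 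Since each outside vertex $z$ is hit by at most one boundary edge, summing over $e\in\partial\B_k$ recovers the full variance without the $|\partial\B_k|$ overcount. This martingale-increment structure is precisely what your global $\Psi$ lacks.
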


\begin{proof}
For any $e\in \partial \B_k$, we write $e=(y,z)$ with $y\in B_k, z\notin B_k$. We first show that~\eqref{e.bddi} holds when $h$ only depends on $\eta(y),\eta(z)$, and then consider the general case.

Since $x\notin e$, to simplify the notation we just write $h=h(\eta(y),\eta(z))$. Recalling that $\eta(y),\eta(z)$ are independent Bernoulli random variables with parameter $\rho$, we have
\[
\begin{aligned}
\cA_kh=h(\eta(y),1)\rho+h(\eta(y),0)(1-\rho),\\
(\cA_kh)^e=h(\eta(z),1)\rho+h(\eta(z),0)(1-\rho).
\end{aligned}
\]
Since $a_e(h^e-h)=\1_{\{\eta(y)\neq \eta(z)\}}[h(\eta(z),\eta(y))-h(\eta(y),\eta(z))]$, we further obtain
\[
\begin{aligned}
&\la (\cA_k h)^e-\cA_k h, a_e(h^e-h)\, |\, x\ra\\
&=\rho(1-\rho)[h(0,1)\rho+h(0,0)(1-\rho)-h(1,1)\rho-h(1,0)(1-\rho)][h(0,1)-h(1,0)]\\
&+\rho(1-\rho)[h(1,1)\rho+h(1,0)(1-\rho)-h(0,1)\rho-h(0,0)(1-\rho)][h(1,0)-h(0,1)].
\end{aligned}
\]
Thus, 
\begin{equation}
\begin{aligned}
&\la (\cA_k h)^e-\cA_k h, a_e(h^e-h)\, |\, x\ra\\
&\leq C\beta|h(0,1)-h(1,0)|^2+\frac{C}{\beta}|h(0,1)-h(1,1)|^2+\frac{C}{\beta}|h(0,0)-h(1,0)|^2
\end{aligned}
\label{e.a1b1}
\end{equation}
for some $C=C(\rho)$ and any $\beta>0$. 

By a similar calculation, we have 
\[
\la a_e(h^e-h)^2\, |\, x\ra=2\rho(1-\rho)|h(0,1)-h(1,0)|^2,
\]
and 
\[
\begin{aligned}
&\la (\cA_{k+1}h)^2\, |\, x\ra-\la (\cA_{k}h)^2\, |\, x\ra\\
&= \rho^2(1-\rho)|h(1,1)-h(1,0)|^2+\rho(1-\rho)^2 |h(0,1)-h(0,0)|^2.
\end{aligned}
\]
It is clear that the first term on the right side of \eqref{e.a1b1} can be controlled by $\la a_e(h^e-h)^2\, |\, x\ra$, and the last two terms can be controlled by 
\[
\la a_e(h^e-h)^2\, |\, x\ra+\la (\cA_{k+1}h)^2\, |\, x\ra-\la (\cA_{k}h)^2\, |\, x\ra
\]
 after applying the triangle inequality. Thus  \eqref{e.bddi} is proved when $h$ only depends on $\eta(y),\eta(z)$.

Now we consider the general case. Fix $x\in\Z^d$, and for any $e=(y,z)\in \partial \B_k$ with $z\notin B_k$, we define $h_z:=\la h\, |\, \mcl F(B_k\cup \{z\})\ra_x$, where to avoid confusion, we use $\la\,\cdot\,\ra_x$ to denote $\la\,\cdot\,|\,x\ra$, then we have
\[
\la (\cA_k h)^e-\cA_k h, a_e(h^e-h)\, |\, x\ra=\la (\cA_k h_z)^e-\cA_k h_z, a_e(h_z^e-h_z)\, |\, x\ra.
\]
For each realization of $\{\eta(\tilde{y}): \tilde{y}\in B_k\setminus\{y\}\}$, we view $h_z$ as a function of $\eta(y),\eta(z)$, and the previous discussion shows that 
\[
\begin{aligned}
&\la (\cA_k h_z)^e-\cA_k h_z, a_e(h_z^e-h_z)\, |\, x\ra \\
&\leq C\beta\la a_e(h_z^e-h_z)^2\, |\, x\ra+\frac{C}{\beta} \left[\la (\cA_{k+1}h_z)^2\, |\, x\ra-\la (\cA_{k}h_z)^2\, |\, x\ra\right].
\end{aligned}
\]
For the first term on the right side of the above inequality, we have 
\[
\la a_e(h_z^e-h_z)^2\, |\, x\ra\leq \la a_e(h^e-h)^2\, |\, x\ra,
\]
so it remains to show 
\begin{equation}
\sum_{e\in \partial \B_k} \left(\la (\cA_{k+1}h_z)^2\, |\, x\ra-\la (\cA_{k}h_z)^2\, |\, x\ra\right)\leq \la (\cA_{k+1}h)^2\, |\, x\ra-\la (\cA_{k}h)^2\, |\, x\ra.
\label{e.a1b2}
\end{equation}
Let $B_{k+1}\setminus B_k=\{z_i\}_{i=1}^N$ and $\mcl F_l=\mcl F(B_k\cup\{z_i\}_{i=1}^l), l=1,\ldots,N$. We have
\[
\la (\cA_{k+1}h)^2\, |\, x\ra-\la (\cA_{k}h)^2\, |\, x\ra=\sum_{l=1}^N \la \la h\, |\, \mcl F_l\ra^2-\la h\, |\, \mcl F_{l-1}\ra^2 \, \big|\, x\ra.
\]
For any $e=(y,z)\in \partial \B_k$, it is clear that $z=z_l$ for some $l=1,\ldots,N$. We claim that 
\begin{equation}
\la (\cA_{k+1}h_{z_l})^2\, |\, x\ra-\la (\cA_{k}h_{z_l})^2\, |\, x\ra\leq \la \la h\, |\, \mcl F_l\ra^2-\la h\, |\, \mcl F_{l-1}\ra^2 \, \big|\, x\ra,
\label{e.a1b3}
\end{equation}
which implies \eqref{e.a1b2} and completes the proof. To prove \eqref{e.a1b3}, we observe that $\cA_{k+1}h_{z_l}$ (resp. $\cA_k h_{z_l}$) is the average of $\la h\, |\, \mcl F_l\ra$ (resp. $\la h\,|\, \mcl F_{l-1}\ra$) with respect to $\{z_i\}_{i=1}^{l-1}$, thus 
\[
\la (\cA_{k+1} h_{z_l}-\cA_k h_{z_l})^2 \, |\, x\ra\leq \la ( \la h\, |\, \mcl F_l\ra-\la h\, |\, \mcl F_{l-1}\ra)^2\,|\, x\ra,
\]
which reduces to \eqref{e.a1b3} by the property of conditional expectation.
\end{proof}

\begin{lemma}
\label{l.moment}
Let $\E$ denote an expectation on a probability space, and $p \ge 1$. There exists a constant $C(p)< \infty$ such that for every random variable $X \ge 0$,
\begin{equation*}  
\mathbb{E} \Ll[|X - \mathbb{E}[X]|^{2p}\Rr] \le C \mathbb{E} \Ll[ \Ll( X^p - \E[X^p] \Rr)^2  \Rr] .
\end{equation*}
\end{lemma}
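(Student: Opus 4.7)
The plan is to reduce the inequality to an elementary pointwise bound for non-negative reals, together with a standard symmetrization argument using an independent copy.

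The key pointwise inequality I will establish is that for $a, b \ge 0$ and $p \ge 1$,
\begin{equation}
\label{e.plan.point}
|a - b|^{2p} \le (a^p - b^p)^2.
\end{equation}
By symmetry it suffices to treat $a \ge b \ge 0$, where \eqref{e.plan.point} reduces to $(a - b)^p \le a^p - b^p$. Fixing $b$ and considering $g(a) := a^p - (a-b)^p - b^p$, one has $g(b) = 0$ and $g'(a) = p(a^{p-1} - (a-b)^{p-1}) \ge 0$ since $a \ge a - b \ge 0$ and $p - 1 \ge 0$, whence $g(a) \ge 0$ for $a \ge b$.

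Next I would enlarge the probability space to carry an independent copy $X'$ of $X$, which is harmless and does not affect the expectations of functionals of $X$ alone. By Jensen's inequality applied conditionally on $X$,
\begin{equation*}
|X - \E[X]|^{2p} = \bigl| \E[X - X' \mid X] \bigr|^{2p} \le \E\bigl[\,|X - X'|^{2p} \,\big|\, X\bigr],
\end{equation*}
so taking expectations gives $\E[|X - \E X|^{2p}] \le \E[|X - X'|^{2p}]$. Applying \eqref{e.plan.point} pointwise with $a = X, b = X'$ (both non-negative) and using independence then yields
\begin{equation*}
\E\bigl[|X - X'|^{2p}\bigr] \le \E\bigl[(X^p - (X')^p)^2\bigr] = 2\,\mathrm{Var}(X^p) = 2\,\E\bigl[(X^p - \E[X^p])^2\bigr],
\end{equation*}
which delivers the claim with $C(p) = 2$.

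There is no serious obstacle: the only non-trivial ingredient is \eqref{e.plan.point}, whose one-variable monotonicity proof is essentially the super-additivity of $t \mapsto t^p$ on $[0,\infty)$ for $p \ge 1$. The non-negativity hypothesis on $X$ is used precisely to invoke \eqref{e.plan.point}, and the symmetrization step is entirely standard.
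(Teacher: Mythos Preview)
Your proof is correct and follows essentially the same route as the paper: symmetrize with an independent copy via conditional Jensen, then apply the pointwise inequality $|a-b|^p \le |a^p - b^p|$ for $a,b \ge 0$, $p \ge 1$, and finish with the variance identity for $X^p - (X')^p$. The only cosmetic difference is that you prove the pointwise bound with the sharp constant $1$ (yielding $C(p)=2$), whereas the paper merely asserts existence of some $C(p)$ after reducing by homogeneity to $x=1$, $y\in[0,1]$.
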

\begin{proof}
Let $Y$ be an independent copy of $X$. We have
$$
\|X - \mathbb{E}[X]\|_{2p}  = \|\mathbb{E}\left[X - Y \, | \, X\right]\|_{2p}  \le \|X - Y\|_{2p}.
$$
Moreover, there exists a constant $C(p)$ such that for every $x,y \ge 0$,
$$
|x-y|^p \le C(p) |x^p - y^p|.
$$
Indeed, it suffices to verify this for $x = 1$ and $y \in [0,1]$ by homogeneity and symmetry. This is then a simple exercise. As a consequence, we deduce
$$
\|X - \mathbb{E}[X]\|_{2p} \le C(p) \||X^p - Y^p|^{1/p}\|_{2p} = C(p) \|X^p - Y^p\|_{2}^{1/p}.
$$
We conclude by the triangle inequality.
\end{proof}

\subsection*{Acknowledgments} We thank Felix Otto for stimulating discussions, and Mikael de la Salle for showing us the beautiful proof of Lemma~\ref{l.moment} reproduced here. Y.G.\ was partially supported by the NSF through DMS-1613301.

\newcommand{\noop}[1]{} \def\cprime{$'$}


\begin{thebibliography}{10}

\bibitem{abdh}
S.~Andres, M.~T. Barlow, J.-D. Deuschel, and B.~M. Hambly.
\newblock Invariance principle for the random conductance model.
\newblock {\em Probab. Theory Related Fields}, 156(3-4):535--580, 2013.

\bibitem{ads4}
S.~Andres, A.~Chiarini, J.-D. Deuschel, and M.~Slowik.
\newblock Quenched invariance principle for random walks with time-dependent
  ergodic degenerate weights, preprint, arXiv:1602.01760.

\bibitem{ads1}
S.~Andres, J.-D. Deuschel, and M.~Slowik.
\newblock Invariance principle for the random conductance model in a degenerate
  ergodic environment.
\newblock {\em Ann. Probab.}, 43(4):1866--1891, 2015.

\bibitem{ads3}
S.~Andres, J.-D. Deuschel, and M.~Slowik.
\newblock Heat kernel estimates for random walks with degenerate weights.
\newblock {\em Electron. J. Probab.}, 21:Paper No. 33, 21, 2016.

\bibitem{ad}
S.~Armstrong and P.~Dario.
\newblock Elliptic regularity and quantitative homogenization on percolation
  clusters, {preprint, arXiv:1609.09431}.

\bibitem{AKMbook}
S.~Armstrong, T.~Kuusi, and J.-C. Mourrat.
\newblock {\em Quantitative stochastic homogenization and large-scale
  regularity}.
\newblock \noop{2018}Preliminary version available at
  {\url{http://perso.ens-lyon.fr/jean-christophe.mourrat/lecturenotes.pdf}}
  (2017).

\bibitem{AKM}
S.~Armstrong, T.~Kuusi, and J.-C. Mourrat.
\newblock Mesoscopic higher regularity and subadditivity in elliptic
  homogenization.
\newblock {\em Comm. Math. Phys.}, 347(2):315--361, 2016.

\bibitem{AKM2}
S.~Armstrong, T.~Kuusi, and J.-C. Mourrat.
\newblock The additive structure of elliptic homogenization.
\newblock {\em Invent. Math.}, 208(3):999--1154, 2017.

\bibitem{AM}
S.~N. Armstrong and J.-C. Mourrat.
\newblock Lipschitz regularity for elliptic equations with random coefficients.
\newblock {\em Arch. Ration. Mech. Anal.}, 219(1):255--348, 2016.

\bibitem{AS}
S.~N. Armstrong and C.~K. Smart.
\newblock Quantitative stochastic homogenization of convex integral
  functionals.
\newblock {\em Ann. Sci. \'Ec. Norm. Sup\'er. (4)}, 49(2):423--481, 2016.

\bibitem{bar}
M.~T. Barlow.
\newblock Random walks on supercritical percolation clusters.
\newblock {\em Ann. Probab.}, 32(4):3024--3084, 2004.

\bibitem{berbis}
N.~Berger and M.~Biskup.
\newblock Quenched invariance principle for simple random walk on percolation
  clusters.
\newblock {\em Probab. Theory Related Fields}, 137(1-2):83--120, 2007.

\bibitem{berzeg}
L.~Bertini and B.~Zegarlinski.
\newblock Coercive inequalities for {K}awasaki dynamics. {T}he product case.
\newblock {\em Markov Process. Related Fields}, 5(2):125--162, 1999.

\bibitem{biskup-survey}
M.~Biskup.
\newblock Recent progress on the random conductance model.
\newblock {\em Probab. Surv.}, 8:294--373, 2011.

\bibitem{bispre}
M.~Biskup and T.~M. Prescott.
\newblock Functional {CLT} for random walk among bounded random conductances.
\newblock {\em Electron. J. Probab.}, 12:no. 49, 1323--1348, 2007.

\bibitem{bkm}
O.~Boukhadra, T.~Kumagai, and P.~Mathieu.
\newblock Harnack inequalities and local central limit theorem for the
  polynomial lower tail random conductance model.
\newblock {\em J. Math. Soc. Japan}, 67(4):1413--1448, 2015.

\bibitem{ccr}
N.~Cancrini, F.~Cesi, and C.~Roberto.
\newblock Diffusive long-time behavior of {K}awasaki dynamics.
\newblock {\em Electron. J. Probab.}, 10:no. 7, 216--249, 2005.

\bibitem{deu94}
J.-D. Deuschel.
\newblock Algebraic {$L^2$} decay of attractive critical processes on the
  lattice.
\newblock {\em Ann. Probab.}, 22(1):264--283, 1994.

\bibitem{ads5}
J.-D. Deuschel, T.~A. Nguyen, and M.~Slowik.
\newblock Quenched invariance principles for the random conductance model on a
  random graph with degenerate ergodic weights, preprint, arXiv:1602.08428.

\bibitem{diaconis1993comparison}
P.~Diaconis and L.~Saloff-Coste.
\newblock Comparison theorems for reversible {M}arkov chains.
\newblock {\em Ann. Appl. Probab.}, 3(3):696--730, 1993.

\bibitem{freedman}
D.~A. Freedman.
\newblock On tail probabilities for martingales.
\newblock {\em Ann. Probab.}, 3:100--118, 1975.

\bibitem{GNO2}
A.~Gloria, S.~Neukamm, and F.~Otto.
\newblock An optimal quantitative two-scale expansion in stochastic
  homogenization of discrete elliptic equations.
\newblock {\em ESAIM Math. Model. Numer. Anal.}, 48(2):325--346, 2014.

\bibitem{GNO}
A.~Gloria, S.~Neukamm, and F.~Otto.
\newblock Quantification of ergodicity in stochastic homogenization: optimal
  bounds via spectral gap on {G}lauber dynamics.
\newblock {\em Invent. Math.}, 199(2):455--515, 2015.

\bibitem{GNO3}
A.~Gloria, S.~Neukamm, and F.~Otto.
\newblock A regularity theory for random elliptic operators, \noop{3002}
  {preprint, arXiv:1409.2678}.

\bibitem{GO1}
A.~Gloria and F.~Otto.
\newblock An optimal variance estimate in stochastic homogenization of discrete
  elliptic equations.
\newblock {\em Ann. Probab.}, 39(3):779--856, 2011.

\bibitem{GO2}
A.~Gloria and F.~Otto.
\newblock An optimal error estimate in stochastic homogenization of discrete
  elliptic equations.
\newblock {\em Ann. Appl. Probab.}, 22(1):1--28, 2012.

\bibitem{GO3}
A.~Gloria and F.~Otto.
\newblock Quantitative results on the corrector equation in stochastic
  homogenization.
\newblock {\em J. Eur. Math. Soc. (JEMS)}, 19(11):3489--3548, 2017.

\bibitem{GO5}
A.~Gloria and F.~Otto.
\newblock The corrector in stochastic homogenization: optimal rates, stochastic
  integrability, and fluctuations, \noop{2016}{preprint, arXiv:1510.08290}.

\bibitem{jacshi}
J.~Jacod and A.~N. Shiryaev.
\newblock {\em Limit theorems for stochastic processes}, volume 288 of {\em
  Grundlehren der Mathematischen Wissenschaften}.
\newblock Springer-Verlag, Berlin, second edition, 2003.

\bibitem{jlqy}
E.~Janvresse, C.~Landim, J.~Quastel, and H.~T. Yau.
\newblock Relaxation to equilibrium of conservative dynamics. {I}. {Z}ero-range
  processes.
\newblock {\em Ann. Probab.}, 27(1):325--360, 1999.

\bibitem{kipvar}
C.~Kipnis and S.~R.~S. Varadhan.
\newblock Central limit theorem for additive functionals of reversible {M}arkov
  processes and applications to simple exclusions.
\newblock {\em Comm. Math. Phys.}, 104(1):1--19, 1986.

\bibitem{komorowski2012fluctuations}
T.~Komorowski, C.~Landim, and S.~Olla.
\newblock {\em Fluctuations in {M}arkov processes}, volume 345 of {\em
  Grundlehren der Mathematischen Wissenschaften}.
\newblock Springer, Heidelberg, 2012.

\bibitem{K1}
S.~M. Kozlov.
\newblock Averaging of differential operators with almost periodic rapidly
  oscillating coefficients.
\newblock {\em Mat. Sb. (N.S.)}, 107(149)(2):199--217, 317, 1978.

\bibitem{kum}
T.~Kumagai.
\newblock {\em Random walks on disordered media and their scaling limits},
  volume 2101 of {\em Lecture Notes in Mathematics}.
\newblock Springer, Cham, 2014.
\newblock Lecture notes from the 40th Probability Summer School held in
  Saint-Flour, 2010.

\bibitem{lanyau}
C.~Landim and H.~T. Yau.
\newblock Convergence to equilibrium of conservative particle systems on
  {$\mathbb{Z}^d$}.
\newblock {\em Ann. Probab.}, 31(1):115--147, 2003.

\bibitem{lig91}
T.~M. Liggett.
\newblock {$L_2$} rates of convergence for attractive reversible nearest
  particle systems: the critical case.
\newblock {\em Ann. Probab.}, 19(3):935--959, 1991.

\bibitem{lig}
T.~M. Liggett.
\newblock {\em Stochastic interacting systems: contact, voter and exclusion
  processes}, volume 324 of {\em Grundlehren der Mathematischen
  Wissenschaften}.
\newblock Springer-Verlag, Berlin, 1999.

\bibitem{lig0}
T.~M. Liggett.
\newblock {\em Continuous time {M}arkov processes}, volume 113 of {\em Graduate
  Studies in Mathematics}.
\newblock American Mathematical Society, Providence, RI, 2010.

\bibitem{lunt}
J.~Lunt, T.~J. Lyons, and T.~S. Zhang.
\newblock Integrability of functionals of {D}irichlet processes, probabilistic
  representations of semigroups, and estimates of heat kernels.
\newblock {\em J. Funct. Anal.}, 153(2):320--342, 1998.

\bibitem{lyons}
T.~J. Lyons and W.~A. Zheng.
\newblock A crossing estimate for the canonical process on a {D}irichlet space
  and a tightness result.
\newblock {\em Ast\'erisque}, (157-158):249--271, 1988.
\newblock Colloque Paul L{\'e}vy sur les Processus Stochastiques (Palaiseau,
  1987).

\bibitem{MO}
D.~Marahrens and F.~Otto.
\newblock Annealed estimates on the {G}reen function.
\newblock {\em Probab. Theory Related Fields}, 163(3-4):527--573, 2015.

\bibitem{mathieu}
P.~Mathieu.
\newblock Quenched invariance principles for random walks with random
  conductances.
\newblock {\em J. Stat. Phys.}, 130(5):1025--1046, 2008.

\bibitem{matpia}
P.~Mathieu and A.~Piatnitski.
\newblock Quenched invariance principles for random walks on percolation
  clusters.
\newblock {\em Proc. R. Soc. Lond. Ser. A Math. Phys. Eng. Sci.},
  463(2085):2287--2307, 2007.

\bibitem{matrem}
P.~Mathieu and E.~Remy.
\newblock Isoperimetry and heat kernel decay on percolation clusters.
\newblock {\em Ann. Probab.}, 32(1A):100--128, 2004.

\bibitem{nash2}
J.-C. Mourrat and F.~Otto.
\newblock Anchored {N}ash inequalities and heat kernel bounds for static and
  dynamic degenerate environments.
\newblock {\em J. Funct. Anal.}, 270(1):201--228, 2016.

\bibitem{osada}
H.~Osada.
\newblock Homogenization of diffusion processes with random stationary
  coefficients.
\newblock In {\em Probability theory and mathematical statistics ({T}bilisi,
  1982)}, volume 1021 of {\em Lecture Notes in Math.}, pages 507--517.
  Springer, Berlin, 1983.

\bibitem{PV1}
G.~C. Papanicolaou and S.~R.~S. Varadhan.
\newblock Boundary value problems with rapidly oscillating random coefficients.
\newblock In {\em Random fields, {V}ol. {I}, {II} ({E}sztergom, 1979)},
  volume~27 of {\em Colloq. Math. Soc. J\'anos Bolyai}, pages 835--873.
  North-Holland, Amsterdam, 1981.

\bibitem{peyre}
R.~Peyre.
\newblock A probabilistic approach to {C}arne's bound.
\newblock {\em Potential Anal.}, 29(1):17--36, 2008.

\bibitem{protter}
P.~E. Protter.
\newblock {\em Stochastic integration and differential equations}, volume~21 of
  {\em Applications of Mathematics}.
\newblock Springer-Verlag, Berlin, second edition, 2004.

\bibitem{qua}
J.~Quastel.
\newblock Diffusion of color in the simple exclusion process.
\newblock {\em Comm. Pure Appl. Math.}, 45(6):623--679, 1992.

\bibitem{sidszn}
V.~Sidoravicius and A.-S. Sznitman.
\newblock Quenched invariance principles for walks on clusters of percolation
  or among random conductances.
\newblock {\em Probab. Theory Related Fields}, 129(2):219--244, 2004.

\bibitem{Y0}
V.~V. Yurinski{\u\i}.
\newblock On a {D}irichlet problem with random coefficients.
\newblock In {\em Stochastic differential systems ({P}roc. {IFIP}-{WG} 7/1
  {W}orking {C}onf., {V}ilnius, 1978)}, volume~25 of {\em Lecture Notes in
  Control and Information Sci.}, pages 344--353. Springer, Berlin-New York,
  1980.

\end{thebibliography}

\end{document}